\newtheorem{theorem}{Theorem}[section]
\newtheorem{proposition}[theorem]{Proposition}
\newtheorem{lemma}[theorem]{Lemma}
\newtheorem{corollary}[theorem]{Corollary}
\newtheorem{conjecture}[theorem]{Conjecture}
\newtheorem{definition}[theorem]{Definition}
\newcommand{\Hom}{\mbox{Hom}}
\newcommand{\End}{{\rm End}}
\newcommand{\Aut}{{\rm Aut}}
\newcommand{\Disc}{{\rm Disc}}
\newcommand{\der}{{\rm der}}
\newcommand{\F}{{\mathbb F}}
\newcommand{\CC}{{\mathbb C}}
\newcommand{\R}{{\mathbb R}}
\newcommand{\Z}{{\mathbb Z}}
\newcommand{\Q}{{\mathbb Q}}
\newcommand{\C}{{\mathbb C}}
\newcommand{\SSS}{{\mathbb S}}
\newcommand{\AAA}{{\mathbb A}}
\newcommand{\A}{{\mathbb A}}
\newcommand{\ol}{\overline}
\newcommand{\wh}{\widehat}
\newcommand{\wt}{\widetilde}
\newcommand{\tens}{\otimes}
\DeclarePairedDelimiterX{\Nm}[1]{\lVert}{\rVert}{#1}
\newcommand{\sous}{\backslash}
\newcommand{\abs}[1]{\left|#1\right|}
\title[Heights on `hybrid orbits']{Heights on `hybrid orbits' in Shimura varieties.}
\author{Rodolphe Richard, Andrei Yafaev}
\date{\today}
 \address{Richard, Yafaev: UCL, Department of Mathematics, Gower Street, WC1E 6BT, London, UK}
\email{Rodolphe.Richard@normalesup.org, yafaev@math.ucl.ac.uk}
\begin{document}
\setcounter{tocdepth}{1}
\setcounter{secnumdepth}{4}
\maketitle

\begin{abstract}
{
We prove the ``hybrid conjecture'' which is a common generalisation of the André-Oort conjecture and the André-Pink-Zannier conjecture, in the case of Shimura varieties
of abelian type.
}
\end{abstract}

\tableofcontents

\section{Introduction}\label{sec:intro}

In the context of abelian varieties, the Manin-Mumford conjecture was proved in~\cite{Raynaud}. The Mordell conjecture was proven in~\cite{FaFa} and the weak Mordell-Lang conjecture for finite type subgroup in~\cite{FaFa2}. The full Mordell-Lang conjecture, about finite \emph{rank} subgroups, contains both the Manin-Mumford conjecture and the weak Mordell-Lang conjecture. See~\cite{Poonen} for another hybrid result, which combines Bogomolov conjecture and the full Mordell-Lang conjecture. The full Mordell-Lang conjecture for a subvariety~$V$ of an abelian variety~$A$ and a finite rank subgroup generated by~$x_1,\ldots,x_n\in A$ is a consequence of the Zilber-Pink conjecture for the subvariety~$V\times\{(x_1,\ldots,x_n)\}$ in the abelian variety~$A\times A^n$.

In the context of Shimura varieties, the André-Oort conjecture was proved for Shimura varieties of abelian type in the mid-2010s and in general in the early 2020s, as a combination of many works\footnote{See the introduction of~\cite{AO-PST}.}. The André-Pink-Zannier conjecture was proven for Shimura varieties of abelian type in \cite{APZ1}, \cite{APZ2}. The André-Oort conjecture is an analogue of the Manin-Mumford conjecture and the André-Pink-Zannier conjecture an analogue of the weak Mordell-Lang conjecture. The corresponding analogue of the full Mordell-Lang conjecture is the \emph{hybrid conjecture} introduced in~\cite{RYLMS}. See also~\cite{BD} for a slightly different viewpoint.    
The  hybrid conjecture for a subvariety~$V$ of a Shimura variety~$Sh_K(G,X)$ and the hybrid orbit of a pointed Shimura Datum~$(M,X_M,x)$ is a consequence of the Zilber-Pink conjecture for the subvariety~$V\times\{[x,1]\}$ of the weakly special subvariety~$Sh_K(G,X)\times \{[x,1]\}\subseteq Sh_K(G,X)\times Sh_{K_M}(M,X_M)$.
 
The hybrid conjecture is a substantial strengthening of both André-Oort and the generalised André-Pink-Zannier conjecture. It is stronger than the conjunction of these. It is significantly stronger than the main result of~\cite{AD} (see~\cite[\S8]{RYLMS}). The hybrid conjecture implies the  Zilber-Pink conjecture for hypersurfaces of weakly special subvarieties. (See~\cite{RYLMS}). See~\cite[Th. 7.8--7.9]{BD} for other consequences of the hybrid conjecture in relation to the Zilber-Pink conjectures. 

We refer to~\cite[Part IV]{P} for generalities on the Zilber-Pink conjectures.

In this article we prove the hybrid conjecture for Shimura varieties of abelian type. This applies to the most important Shimura varieties, notably those related to moduli spaces of abelian varieties. 
As a result, one deduces the consequences mentioned above in the abelian type case.

To provide a motivation for the definition below, consider the following situation.
Let~$A$ be an abelian variety over~$\C$. The sets of the form~$\{\phi(x)|\phi\in \Hom(B,A)\}\subseteq A$, where~$B$ is an abelian variety and~$x\in B$, are the subgroups of finite type which are invariant under endomorphisms of~$A$. Every finite type subgroup of an abelian variety is contained in a finitely generated~$\End(A)$-module, and finitely generated~$\End(A)$-modules are of finite type as abelian groups. The weak Mordell-Lang conjecture is concerned with finitely generated subgroups in abelian varieties. It is equivalent to consider subgroups of the form~$\{\phi(x)|\phi\in \Hom(B,A)\}$ for some abelian variety~$B$ and some fixed~$x\in B$. We obtain a finite rank subgroup by taking inverse images of the~$\phi(x)$ under isogenies~$A\to A$.

\begin{definition}Let~$(M,X_M)$ and~$(G,X)$ be two Shimura data and~$x\in X_M$ be a Hodge generic point. The hybrid orbit of~$(M,X_M,x)$ in~$(G,X)$ is the set of~$x'\in X$ such that,
denoting by~$(M_{x'},X_{x'})\leq (G,X)$ the smallest Shimura subdatum such that~$x'\in X_{x'}$, there exists a morphism of Shimura data
\[
\Phi:(M,X)\to (M^{ad}_{x'},X^{ad}_{x'})
\]
such that~$\Phi\circ x=x'^{ad}$.
\end{definition}

Recall that~$x'$ is in the generalised Hecke orbit of~$x$ when there is a morphism of Shimura data~$\Phi:(M,X)\to (M_{x'},X_{x'})$ such that~$\Phi\circ x=x'$. Therefore, the generalised Hecke orbit of~$x$ is a subset of the hybrid orbit of~$x$. Observe that~$x'$ is a special point if and only if~$(M^{ad}_{x'},X^{ad}_{x'})=(1,\{1\})$. Therefore, a hybrid orbit always contains the set of all special points as a subset. As a consequence, the following is a common generalisation of the André-Oort Conjecture and the generalised André-Pink-Zannier Conjecture. We refer to~\cite{RYLMS} for other equivalent formulations of Conjecture~\ref{conj:hybrid}.
\begin{conjecture}[Hybrid conjecture]\label{conj:hybrid}
 Let~$V\subseteq Sh_K(G,X)$ be an irreducible subvariety and let~$\Sigma\subseteq Sh_K(G,X)$ the image by~$X\to Sh_K(G,X)$ of a hybrid orbit. Assume that
\[
V\cap \Sigma
\]
is Zariski dense in~$V$. Then~$V$ is weakly special.
\end{conjecture}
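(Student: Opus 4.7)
The plan is to run the Pila--Zannier strategy in a form that handles simultaneously the ``special'' direction (as in André-Oort) and the ``Hecke'' direction (as in André-Pink-Zannier). Work on the complex uniformization $\pi:X\to Sh_K(G,X)$, fix a semialgebraic Siegel fundamental set $\mathcal{F}\subset X$, and let $\widetilde V\subset\mathcal{F}$ be the preimage of a component of $V$. For each $x'\in V\cap\Sigma$ the definition of hybrid orbit supplies a morphism of Shimura data $\Phi:(M,X_M)\to(M^{ad}_{x'},X^{ad}_{x'})$ with $\Phi\circ x=x'^{ad}$. After normalising $\Phi$ in its $M^{ad}_{x'}(\mathbb{R})$-conjugacy class this data is encoded by a $\mathbb{Q}$-homomorphism $\phi:M^{ad}\to G^{ad}$ together with an element $\gamma\in G^{ad}(\mathbb{Q})$ translating $\phi\circ x$ to $x'^{ad}$, and one attaches to the pair $(\phi,\gamma)$ a complexity $\Delta(x')$ measured via heights in a fixed faithful $\mathbb{Q}$-representation of $G^{ad}$.

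The first and main step is a Galois orbit lower bound of the form
\[
|\Gal(\overline{\mathbb{Q}}/F)\cdot x'|\;\gg\;\Delta(x')^{\delta}
\]
valid for some $\delta>0$ and some number field $F$ of definition of $V$. Establishing this estimate in the hybrid setting is the paper's chief task, and the only place the abelian type hypothesis enters essentially: one transfers the problem via a Shimura morphism to a Siegel modular variety $\mathcal{A}_g$, and combines Masser--Wüstholz isogeny estimates (controlling the ``Hecke'' contribution to $\Delta$, as in the André-Pink-Zannier proofs of~\cite{APZ1,APZ2}) with Tsimerman-style bounds on Faltings heights of abelian varieties with reductive Mumford--Tate group (controlling the ``special'' contribution, as in the André-Oort proofs). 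The technical difficulty is that the two sources of complexity live on different factors and the factorisation of $x'^{ad}$ through the fixed Hodge-generic $(M,X_M,x)$ must be exploited to prove a single polynomial lower bound that is uniform in both simultaneously.

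The second step is the o-minimal counting. The map $\pi|_{\mathcal{F}}$ is definable in $\mathbb{R}_{an,exp}$ (Klingler--Ullmo--Yafaev, Peterzil--Starchenko), so the lower bound above combined with the Pila--Wilkie counting theorem, applied to the definable set parametrising triples $(\widetilde x',\phi,\gamma)$ whose $\pi$-image lies in $V\cap\Sigma$, forces the existence of positive-dimensional real semialgebraic families of such triples in $\widetilde V$ as soon as $V\cap\Sigma$ is Zariski dense in $V$. The hyperbolic (and mixed) Ax--Lindemann--Weierstrass theorem for Shimura varieties then upgrades these families to a positive-dimensional weakly special subvariety $W\subseteq V$ whose intersection with $\Sigma$ is Zariski dense in $W$: here one uses that a family of Shimura morphisms $\Phi$ varying algebraically forces the target Shimura data to trace out a weakly special subvariety.

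The theorem is then concluded by induction on $\dim(V)-\dim(W)$: the Shimura subdatum defining $W$ realises $W$ as a fibre of a Shimura morphism $Sh_K(G,X)\dashrightarrow Sh_{K'}(G',X')$, and replacing $V$ by the image of this map one reduces to a lower-dimensional instance in which the hybrid orbit datum $(M,X_M,x)$ is replaced by a quotient or sub-datum in a controlled way; iterating eventually forces $V$ itself to be weakly special. The principal obstacle I foresee is precisely the first step: producing a single polynomial Galois lower bound in $\Delta(x')$ which is at the same time uniform over the hybrid orbit and compatible with the fixed datum $(M,X_M,x)$. Once this bound is in hand, the rest of the argument is a parallel of the machinery already perfected for the pure André-Oort and André-Pink-Zannier theorems of abelian type.
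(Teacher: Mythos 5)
Your outline reproduces the skeleton that the paper itself follows (Pila--Zannier counting, definability, Ax--Lindemann, induction via weakly special fibrations, with the abelian type hypothesis entering only through the Galois lower bound), but the step you yourself flag as ``the chief task'' --- a single polynomial Galois lower bound in a complexity $\Delta(x')$ uniform in both the toric and the semisimple directions --- is exactly the content of the paper, and your proposal does not supply the mechanism that makes it work. Saying one will ``combine Masser--W\"ustholz isogeny estimates with Tsimerman-style bounds'' is a statement of the problem, not a proof: the two estimates a priori bound the Galois orbit only from below by (a power of) the derived-part complexity, respectively the central-part complexity, and neither alone dominates $\Delta(x')$. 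What the paper actually needs, and builds, is: (i) a decomposition of the hybrid orbit into finitely many \emph{geometric} hybrid orbits parametrised by rational points of $W=G/(N_G(Z(M_{x'}))\cap Z_G(M_{x'}^{der}))$, with \emph{uniqueness} of the rational preimage (Propositions~\ref{prop:41},~\ref{prop:unicity}), without which the complexity $\Delta(x')$ is not even well defined; (ii) the factorisation $H_f\approx H_f^{cent}\cdot H_f^{der}$ and the Height-equals-Discriminant theorem (Theorem~\ref{thm:AOvsHeight}) identifying $H_f^{cent}$ with the Andr\'e--Oort discriminant $d_L\cdot[T_{max}:T(\A_f)\cap K]$, which is what lets the Tsimerman/averaged-Colmez input speak to the height used in the counting; (iii) structure theorems for the adelic Galois image coming from the \emph{uniform integral Tate property} of~\cite{APZ2} (not Masser--W\"ustholz): the image contains $h$-th powers of the centre and almost splits as a product of its central and derived parts (Theorems~\ref{thm:puissancecentre},~\ref{thm:Uestproduit},~\ref{thm:complement}), together with the adelic torus estimate (Theorems~\ref{Th:EYext},~\ref{thm:84}) showing that passing to $h$-th powers changes the relevant indices only polynomially; and (iv) the dichotomy trick of \S\ref{sec:dicho}: one proves $[F([x',1]):F]\succcurlyeq H_f^{cent}$ and $H_f^{cent}\cdot[F([x',1]):F]\succcurlyeq H_f^{der}$ (Theorem~\ref{th:sec10}), and only the elementary inequality~\eqref{eq:dichotomy trick} then yields $[F([x',1]):F]\succcurlyeq H_f$. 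None of these four ingredients appears in your sketch, and without (iii) and (iv) in particular there is no way to pass from two separate bounds ``on different factors'' to the single bound your counting step requires.

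A secondary but genuine gap: for Pila--Wilkie you must count rational points of the \emph{same} height that appears in the Galois bound, inside a fixed definable fundamental set. In the hybrid setting the parametrising variety is $G/(N\cap Z)$ rather than $G/Z_G(M)$, so the comparison of the archimedean and finite parts of the height on Siegel sets (\S\ref{sec:archimedan part}) and the Galois invariance of the height (\S\ref{sec:invariance}) have to be re-proved for this new $W$; your proposal treats them as automatic. These are adaptations of~\cite{APZ1} rather than new ideas, but they are needed, and the choice of $N\cap Z$ (rather than, say, $Z_G(M_{x'})$ or $N_G(M_{x'})$) is precisely what makes the preimage unique and the height both well defined and Galois invariant; a complexity attached naively to a pair $(\phi,\gamma)$ as in your first paragraph does not obviously have either property.
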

By~\cite{RYLMS} this implies the following. We refer to~\cite{DR} for the notion of optimal subvariety.
\begin{conjecture}[Zilber-Pink conjecture for weak defect one] \label{conj:case ZP}
Let~$W\subseteq Sh_K(G,X)$ be a Hodge generic weakly special subvariety and~$V\subseteq W$ an irreducible subvariety of codimension one.
Then~$V$ has finitely many optimal subvarieties.
\end{conjecture}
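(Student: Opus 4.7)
The plan is to deduce Conjecture~\ref{conj:case ZP} from Conjecture~\ref{conj:hybrid}, which this paper establishes for Shimura varieties of abelian type; this implication is the one attributed to~\cite{RYLMS}. First I would analyse which subvarieties of $V$ are optimal. We may assume $V$ is not itself weakly special, for otherwise $V$ is trivially its unique optimal subvariety. Because $W$ is Hodge generic weakly special of dimension $\dim V+1$, we have $\langle V\rangle_{\mathrm{ws}}=W$ and the weakly special defect of $V$ equals $1$. Any $Y\subsetneq V$ of defect at most $1$ must therefore be weakly special, so the optimal subvarieties properly contained in $V$ are exactly the maximal weakly special subvarieties of $W$ lying inside $V$ (possibly including special points as $0$-dimensional examples).

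Second, I would invoke the finiteness of $G(\QQ)$-conjugacy classes of Shimura subdata of $(G,X)$ occurring through points of $W$. Assume for contradiction an infinite family $\{Y_i\}$ of such optimal subvarieties. Passing to an infinite subfamily, I may assume the $Y_i$'s all arise from a single Shimura subdatum type $(M,X_M)$ with a common adjoint decomposition exhibiting each $Y_i$ as a weakly special factor. Picking Hodge generic points $x_i\in Y_i$ and unfolding the definition of hybrid orbit, the family $\{x_i\}\subseteq X$ lies in the hybrid orbit of a suitably chosen triple $(M,X_M,x)$.

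Third, I would conclude. Let $Z_0$ be an irreducible component of the Zariski closure in $\Sh_K(G,X)$ of the image of $\{x_i\}$ that contains infinitely many $x_i$. By construction $Z_0\subseteq V$ and $\{x_i\}\cap Z_0$ is Zariski dense in $Z_0$, so Conjecture~\ref{conj:hybrid} gives that $Z_0$ is weakly special. Since $x_i$ is Hodge generic in the weakly special $Y_i$, the smallest weakly special through $x_i$ is $Y_i$, so $Z_0\supseteq Y_i$ for each $i$ with $x_i\in Z_0$. As $Z_0$ contains infinitely many distinct $Y_i$, we must have $Z_0\supsetneq Y_i$ for infinitely many $i$, contradicting the maximality of these $Y_i$ as weakly special subvarieties of $W$ contained in $V$.

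The main obstacle is verifying that the chosen $x_i$ genuinely form a single hybrid orbit in the precise sense of the above definition: this requires careful bookkeeping of Shimura subdata up to $G(\QQ)$-conjugacy, of the adjoint decomposition realising each $Y_i$ as a weakly special factor, and of the intertwining morphisms $\Phi_i\colon (M,X)\to (M^{\mathrm{ad}}_{x_i},X^{\mathrm{ad}}_{x_i})$ sending a common base point $x$ to $x_i^{\mathrm{ad}}$. A secondary subtlety is handling the possibly varying dimensions of the $Y_i$: one must treat each type separately (in particular, zero-dimensional special points versus positive-dimensional weakly specials) before assembling the final contradiction.
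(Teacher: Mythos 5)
Your overall strategy (deduce Conjecture~\ref{conj:case ZP} from Conjecture~\ref{conj:hybrid} by showing that infinitely many proper optimal subvarieties would force a weakly special subvariety of $V$ strictly containing one of them) is the right shape, and note that the paper itself does not prove this implication at all: Conjecture~\ref{conj:case ZP} is obtained only in the abelian type case, as a corollary of Theorem~\ref{thm:main thm} together with the implication quoted from~\cite{RYLMS}. So you are reproving that implication, and as written your argument has two genuine gaps. The first is that you never use optimality with respect to the \emph{special} defect; everything in your step one is weak-defect bookkeeping. Via ``optimal implies weakly optimal'' (\cite{DR}) one only gets the inclusion: a proper optimal $Y\subsetneq V$ is a maximal weakly special subvariety of $V$ \emph{and} satisfies $\delta(Y)<\delta(V)$ for the special defect $\delta$; your ``exactly'' is false, and the class of maximal weakly special subvarieties alone is hopeless (very general points of $V$ are maximal weakly special, so that class is infinite). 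The inequality $\delta(Y)<\delta(V)$ is precisely where ``$W$ Hodge generic weakly special'' and ``codimension one'' enter: writing $G^{ad}=G_1\times G_2$ and $W$ as the image of $X_1^+\times\{y_2\}$ with $y_2$ Hodge generic, one has $\delta(V)=\dim X_2+1$ while any weakly special $Y\subseteq W$ has $\delta(Y)\geq \dim X_2$, so optimality forces $\delta(Y)=\dim X_2$ and hence forces the adjoint datum of the constant part of $Y$ to be isomorphic, via the projection, to $(G_2,X_2,y_2)$. Without extracting this rigidity there is no reason why the $Y_i$ should all be tied to one triple $(M,X_M,x)$; this is exactly the step you yourself flag as ``the main obstacle'' and leave unresolved, and it cannot be bypassed, since an argument using only ``maximal weakly special'' would prove a false statement.

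The second gap is your choice of points and the final inference. For $x_i$ Hodge generic in $Y_i$, the datum $(M^{ad}_{x_i},X^{ad}_{x_i})$ contains the \emph{moving} semisimple factor of $Y_i$; since $x_i^{ad}$ is Hodge generic in $X^{ad}_{x_i}$, any admissible $\Phi$ is surjective and must send the single fixed point $x$ onto the moving coordinate of $x_i^{ad}$, which varies transcendentally with $x_i$, whereas a fixed $(M,X_M,x)$ reaches only countably many such points; so ``one Hodge generic point per $Y_i$ lies in a single hybrid orbit'' is not something you can arrange in general. Moreover your closing step rests on a false lemma: every point is itself weakly special, so ``the smallest weakly special through $x_i$ is $Y_i$'' is wrong, and $x_i\in Z_0$ does not yield $Y_i\subseteq Z_0$. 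The natural repair (and, in substance, what the quoted implication requires) is to take inside each optimal $Y_i$ the Zariski-dense set of points that are \emph{special in the moving directions}: for such points the adjoint datum reduces to the constant part, which by the defect computation above is a common image of the fixed datum attached to $y_2$, so these dense subsets all lie in one hybrid orbit based at $y_2$; a component $Z_0$ of the closure of their union then genuinely contains infinitely many $Y_i$, is weakly special by Conjecture~\ref{conj:hybrid}, and contradicts maximality. As it stands, your proposal asserts rather than proves the two steps on which the whole deduction depends.
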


Our main result is the following.
\begin{theorem}\label{thm:main thm}
Assume that~$(G, X)$ is of abelian type. Then Conjecture~\ref{conj:hybrid} holds true.
\end{theorem}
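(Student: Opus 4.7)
The plan is to extend the Pila-Zannier strategy---already successfully deployed in~\cite{APZ1, APZ2} for the generalised André-Pink-Zannier conjecture, and in the abelian-type case of André-Oort---to the hybrid setting, by unifying the height and Galois-orbit estimates of both contexts. Fix an irreducible $V \subseteq Sh_K(G,X)$ and a hybrid orbit image $\Sigma$ with $V \cap \Sigma$ Zariski dense; after standard reductions we may assume $V$ is Hodge generic in its smallest containing weakly special subvariety. Choose a Siegel set $\mathcal{F} \subset X$, definable in $\R_{\mathrm{an,exp}}$, and let $\tilde V = \pi^{-1}(V) \cap \mathcal{F}$, where $\pi \colon X \to Sh_K(G,X)$ is the uniformisation; this is a definable set in the chosen o-minimal structure.

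The central task is to attach to each $x' \in \Sigma$ a \emph{complexity} $\Delta(x')$ satisfying a polynomial Galois-orbit lower bound $|\Gal(\oQ/E) \cdot x'| \geq c \cdot \Delta(x')^\delta$ for absolute constants $c, \delta > 0$, and simultaneously to produce lifts $\tilde x' \in \mathcal{F}$ of Galois conjugates of $x'$ whose coordinate heights are polynomially bounded by $\Delta(x')$. At the level of its Mumford-Tate datum $(M_{x'},X_{x'})$, a point $x'$ of a hybrid orbit decomposes into an adjoint part, which via the defining morphism $\Phi$ belongs to the generalised Hecke orbit of $x^{ad}$ in $Sh(G^{ad},X^{ad})$, and a central/torus part, which produces special-point-type data on the centre. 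The required bounds should accordingly arise by \emph{combining} (a) the Galois-orbit and height estimates for generalised Hecke orbits from~\cite{APZ1, APZ2}, applied to the adjoint projection, with (b) the Galois-orbit lower bounds for special points in abelian type, available through Faltings-height estimates on the associated abelian varieties (Tsimerman and related work), applied to the torus component. This is where the heights-on-hybrid-orbits analysis of the paper does its essential work.

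With these estimates in place the standard machinery runs. The Pila-Wilkie counting theorem applied to $\tilde V$ produces, for $\Delta(x')$ large enough, positive-dimensional connected semialgebraic subsets of $\tilde V$ through each chosen lift $\tilde x'$, whose $\Gamma$-translates account for many Galois conjugates. The hyperbolic Ax-Lindemann theorem for Shimura varieties, known in full generality, then forces each such algebraic set to lie inside a proper weakly special subvariety $W \subseteq V$. Zariski density of $\Sigma \cap V$ yields a Zariski dense family of such weakly special subvarieties inside $V$; a standard geometric/monodromy argument (as in the abelian-type André-Oort and APZ proofs) forces $V$ itself to be weakly special.

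The hard part will be the unified Galois-orbit and height lower bound of the second step: the adjoint part is controlled by the APZ bounds and the torus part by the abelian-type special-point bounds, but these two sets of invariants must be \emph{jointly} polynomially commensurable with a single complexity $\Delta(x')$ which also bounds the coordinate height of a representative in $\mathcal{F}$. Controlling the interaction between the centre and the semisimple adjoint datum---in particular ensuring that the integral Mumford-Tate lattice on the centre does not grow faster than the Galois orbit allows---is the delicate joint-height estimate that this paper is designed to provide.
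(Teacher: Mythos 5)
Your outline follows the same overall route as the paper: decompose the problem into a central (toric) part and a derived (semisimple) part, prove a polynomial Galois-orbit lower bound in a suitable complexity by combining Andr\'e--Oort-type bounds for the centre with Andr\'e--Pink--Zannier-type bounds for the derived part, and then run the Pila--Zannier machinery (definable fundamental set, Pila--Wilkie, Ax--Lindemann, geometric conclusion) exactly as in~\cite{APZ1},~\cite{APZ2}. However, there is a genuine gap: the decisive step is asserted rather than proved. Your complexity $\Delta(x')$ is never defined, and defining it is already nontrivial. For a point $x'$ of a hybrid orbit the Mumford--Tate group $M_{x'}$ has prescribed derived part (up to a $\Q$-isomorphism with $M^{der}$) but an essentially arbitrary CM centre, so $x'$ is \emph{not} simply a pair (point of a generalised Hecke orbit of $x^{ad}$, special point), and one cannot just juxtapose the two existing complexities. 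The paper has to introduce geometric hybrid orbits parametrised by $W=G/(N_G(Z(M_{x'}))\cap Z_G(M^{der}_{x'}))$, prove that every hybrid orbit is a \emph{finite} union of such orbits (Theorem~\ref{thm:finiteness geometric hybrid}), and prove uniqueness of rational preimages under the parametrisation (Propositions~\ref{prop:41},~\ref{prop:unicity}) before a height $H_f=H_f^{cent}\cdot H_f^{der}$ is even well defined; none of this is addressed in your plan.

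Second, the ``combination'' of the two Galois bounds is exactly the point where a real argument is needed, and your proposal only names the difficulty (``controlling the interaction between the centre and the adjoint datum'') without resolving it. The paper resolves it by: (i) a dichotomy lemma reducing the target bound $[F([x',1]):F]\succcurlyeq H_f(x')$ to the two one-sided bounds $\gamma\succcurlyeq H_f^{cent}$ and $H_f^{cent}\cdot\gamma\succcurlyeq H_f^{der}$ (\S\ref{sec:dicho}); (ii) the Height-equals-Discriminant Theorem~\ref{thm:AOvsHeight}, which identifies $H_f^{cent}$ with $d_L\cdot[T_{max}:T(\A_f)\cap K]$ so that Tsimerman's bounds and the adelic index estimates of \S\S\ref{sec:adelic tori}--\ref{sec:Auxiliary torus} apply; and (iii), crucially, the near-product decomposition of the adelic Galois image into its central and derived parts (Theorems~\ref{thm:puissancecentre},~\ref{thm:Uestproduit},~\ref{thm:complement}), which rests on the uniform integral Tate property (ultimately Faltings) and a Goursat-type argument. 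Without (iii) one cannot bound the central and derived contributions to the Galois orbit independently, which is precisely what your step ``combine (a) and (b)'' requires. Finally, to run Pila--Wilkie with this arithmetic height you also need the archimedean--finite comparison on Siegel sets (Theorem~\ref{thm:111}) and the Galois invariance of the height (\S\ref{sec:invariance}), neither of which appears in your plan. As written, the proposal is a correct strategy statement but not a proof: everything that distinguishes the hybrid case from a formal concatenation of Andr\'e--Oort and Andr\'e--Pink--Zannier is left open.
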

We obtain the following as a consequence.
\begin{corollary} Assume that~$(G,X)$ is of abelian type. Then Conjecture~\ref{conj:case ZP} holds true.
\end{corollary}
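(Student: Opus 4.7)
The plan is to derive this corollary by combining Theorem~\ref{thm:main thm} with the implication ``Conjecture~\ref{conj:hybrid} $\Rightarrow$ Conjecture~\ref{conj:case ZP}'' that has already been established in~\cite{RYLMS}. Since $(G,X)$ is of abelian type, Theorem~\ref{thm:main thm} supplies the hypothesis of that implication; the conclusion then follows formally, so there is essentially nothing to do beyond invoking these two results in the right order.

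In more detail, I would recall the mechanism of the reduction carried out in~\cite{RYLMS}. Given a Hodge generic weakly special $W \subseteq \Sh_K(G,X)$ and a codimension-one irreducible $V \subseteq W$, suppose for contradiction that $V$ admits infinitely many optimal subvarieties $Z_i$ in the sense of~\cite{DR}. Each $Z_i$ is contained in a smallest weakly special subvariety $\langle Z_i\rangle$, and the ``weak defect one'' constraint translates into the statement that the smallest Shimura subdata associated with generic points of the $\langle Z_i\rangle$ all receive morphisms from a single auxiliary Shimura datum $(M,X_M)$ equipped with a Hodge generic $x \in X_M$; hence a well-chosen point from each $Z_i$ lies in the image in $\Sh_K(G,X)$ of the hybrid orbit $\Sigma$ of $(M,X_M,x)$. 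Because the $Z_i$ accumulate inside $V$, the collection of these points is Zariski dense in $V$, and applying Conjecture~\ref{conj:hybrid}, now available by Theorem~\ref{thm:main thm}, forces $V$ itself to be weakly special, contradicting the existence of infinitely many distinct optimal $Z_i \subsetneq V$.

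The main obstacle at the level of the corollary is therefore nil: all of the geometric and Galois-theoretic content is absorbed into Theorem~\ref{thm:main thm}, while the Zilber--Pink bookkeeping (the translation from ``infinitely many optimal subvarieties'' into ``Zariski dense intersection with a hybrid orbit'') is handled in~\cite{RYLMS,DR}. The only verification I would make explicit is that the abelian type hypothesis is preserved under the passage to Shimura subdata appearing in the reduction of~\cite{RYLMS}, which is automatic since the property of being of abelian type is inherited by every Shimura subdatum of $(G,X)$.
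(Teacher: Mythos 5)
Your proposal is correct and follows exactly the route the paper intends: the corollary is obtained by combining Theorem~\ref{thm:main thm} (the hybrid conjecture for abelian type $(G,X)$) with the implication ``Conjecture~\ref{conj:hybrid} $\Rightarrow$ Conjecture~\ref{conj:case ZP}'' established in~\cite{RYLMS}, the paper itself offering no further argument beyond this citation. Your additional sketch of the reduction mechanism from~\cite{RYLMS} and the remark on abelian type being inherited by subdata are consistent with what the paper asserts elsewhere (\S\ref{sec:proof}), so nothing is missing.
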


\subsection*{Summary of the article} In~\S\ref{sec:param} we introduce a parametrisation of special points by rational points of some homogeneous varieties.
In~\S\ref{sec:geometric hybrid orbit} we introduce
the geometric hybrid orbits and their parametrisation by rational points of some associated homogeneous varieties. We prove that every hybrid orbit is a finite union of  geometric hybrid orbits.
In~\S\ref{sec:height functions} we study points of a geometric hybrid orbit for which the preimage under the parametrisation contains a unique rational point. It allows us to introduce natural
height functions on geometric hybrid orbits. The height can be decomposed as a product of a toric central part~$H_f^{cent}$ and a semisimple derived part~$H_f^{der}$. The central part~$H_f^{cent}$ is the height of the centre of the Mumford-Tate group, viewed as a torus embedded in~$GL(N)$.
In~\S\ref{sec:height disc} we prove the height of a torus~$T\leq GL(N)$ is polynomially equivalent to the discriminant used in the proof of the André-Oort conjecture. This discriminant is a product of the absolute value~$d(T):=\abs{disc(L(T))}$ of the discriminant of the splitting field and an index~$[T_{max}:T(\widehat{\Z})]$ of a open subgroup~$T(\widehat{\Z})$ of the maximal compact subgroup~$T_{max}\leq T(\A_f)$.
Section~\ref{sec:Galois conj} is central to our article. We state Conjecture~\ref{conj:Bounds} on lower bounds of the size of Galois orbits in a hybrid orbit, in terms of our natural height functions. We explain how the formalism of polynomial equivalence and height 
functions let us decompose the problem in bounds involving~$H_f^{cent}$ and bounds involving the derived part~$H_f^{der}$ up to a power of the central part.
In~\S\ref{sec:Tate} we use results of~\cite[App. C]{APZ2} to prove decomposition properties of images of representation of images the Galois representations associated with a hybrid orbit. This section requires information concerning the Tate conjecture for this Galois representation. 
In Shimura varieties of abelian type, this was derived in~\cite{APZ2} from Faltings theorems on Tate isogeny conjecture.
In~\S\ref{sec:adelic tori} we combine this with the study of adelic orbits from~\cite[App. B]{APZ1}. We obtain a lower bound for the size of Galois orbits in terms of the index~$[T_{max}:T(\widehat{\Z})]$. In~\S\ref{sec:Auxiliary torus} we obtain a lower bound in terms of the absolute value of the discriminant of the splitting field. The argument is a reduction to bounds obtained for special points in the proof of André-Oort conjecture for abelian type Shimura varieties\footnote{Consequences, given in~\cite{TsiAG}, of the averaged Colmez conjecture.}. 
In~\S\ref{sec:geometric bounds} we obtain a lower bound for the size of Galois orbits~$H_f^{der}$, up to a power of~$H_f^{cent}$. This uses results from~\S\ref{sec:Tate} and the methods of~\cite{APZ1} and~\cite{APZ2}.
In~\S\S\ref{sec:archimedan part}--\ref{sec:invariance} we adapt to the hybrid setting some technical results needed in the proof~\S\ref{sec:proof} of the André-Pink-Zannier conjecture. Finally \S\ref{sec:proof} proves the hybrid conjecture along the lines of~\cite{APZ1} and~\cite{APZ2}
for the André-Pink-Zannier conjecture. It implements the Pila-Zannier strategy and uses~\S\S\ref{sec:archimedan part}--\ref{sec:invariance}. An important step uses Conjecture~\S\ref{conj:Bounds}, which, by sections~\S\S\ref{sec:Galois conj}--\ref{sec:geometric bounds}, holds true in Shimura varieties of abelian type. 

\subsection*{Acknowledgements} The second named author is supported by the University of Manchester and Grant~EP/Y020758/1, and was supported by IHÉS.

\section{Parametrisation of special points}\label{sec:param}
\emph{We study the conjugacy classes over~$\ol{\Q}$ of CM-Tori. This allows us to parametrise special points by rational points of finitely many homogeneous varieties. This will be extended to hybrid orbits in the next section.}

\begin{theorem}\label{thm:finiteness conjugacy special}
Let~$(G, X)$ be a Shimura datum, and let~$\Sigma\subseteq X$ be the subset of all special points.
Then there exist finitely many algebraic tori~$T_1,\ldots,T_k$ such that, for every~$\tau \in \Sigma$, there exists an~$i\in \{1;\ldots;k\}$ and q~$g\in G(\ol{\Q})$ such that~$gM_\tau g^{-1}=T_i$.
\end{theorem}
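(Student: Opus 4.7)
The plan is to reduce every Mumford--Tate group of a special point, up to $G(\ol\Q)$-conjugacy, to a subtorus of a fixed maximal torus generated by cocharacters drawn from a finite set.

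For a special point $\tau$, the group $M_\tau$ is a $\Q$-subtorus of $G$, and over $\ol\Q$ the torus $M_{\tau,\ol\Q}$ is generated by the Galois orbit $\{\sigma(\mu_\tau)(\GG_m):\sigma\in\Gal(\ol\Q/\Q)\}$ of the image of the Hodge cocharacter $\mu_\tau$. All the $\mu_\tau$ lie in the Shimura $G(\ol\Q)$-conjugacy class $C$ of cocharacters, which is defined over the reflex field $E$. Since $E/\Q$ is finite, the Galois orbit $\Gal(\ol\Q/\Q)\cdot C$ consists of finitely many $G(\ol\Q)$-conjugacy classes $[\mu_1],\ldots,[\mu_r]$.

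Fix a maximal torus $T_0\subseteq G_{\ol\Q}$ and let $W=N_G(T_0)/T_0$. Two cocharacters of $T_0$ that are $G(\ol\Q)$-conjugate must be $W$-conjugate: if $g\mu g^{-1}=\mu'$ with $\mu,\mu'\in X_*(T_0)$ and $g\in G(\ol\Q)$, then $T_0$ and $gT_0g^{-1}$ are both maximal tori containing $\mu'(\GG_m)$, hence conjugate by some $c\in Z_G(\mu'(\GG_m))$, and $cg\in N_G(T_0)$ still conjugates $\mu$ to $\mu'$. Consequently each $[\mu_i]\cap X_*(T_0)$ is a single $W$-orbit, and
\[
F := \bigcup_{i=1}^{r}\bigl([\mu_i]\cap X_*(T_0)\bigr)\;\subseteq\; X_*(T_0)
\]
is finite.

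For each special $\tau$, choose $g_\tau\in G(\ol\Q)$ with $g_\tau M_{\tau,\ol\Q}g_\tau^{-1}\subseteq T_0$; this is possible since every torus in $G_{\ol\Q}$ lies in a maximal torus and all maximal tori are $G(\ol\Q)$-conjugate. Each conjugate $g_\tau\sigma(\mu_\tau)g_\tau^{-1}$ is then a cocharacter of $T_0$, is $G(\ol\Q)$-conjugate to $\sigma(\mu_\tau)\in[\mu_i]$ for some $i$, and therefore lies in $F$. Hence $g_\tau M_{\tau,\ol\Q}g_\tau^{-1}$ is the subtorus of $T_0$ generated by a subset of $F$, of which there are at most $2^{\abs{F}}$. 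Letting $T_1,\ldots,T_k$ enumerate these finitely many subtori of $T_0$ completes the argument. The main technical input is the $W$-orbit statement above, which relies on the standard fact that any two maximal tori containing a given subtorus are conjugate by an element of its centraliser.
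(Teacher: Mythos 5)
Your proposal is correct. It follows the same overall strategy as the paper --- conjugate the Mumford--Tate torus into a fixed maximal torus and show it is generated by elements drawn from a finite set --- but the implementation is genuinely different in two places. The paper takes the generator to be the real torus $S=\tau(\SSS)$, conjugates it into a maximal torus $T_0$ that is defined over $\Q$ (quoting Borel for its existence), and gets finiteness from the fact that $\Gal(\ol{\Q}/\Q)$ acts on $T_0$ through a finite group $V$ together with the Weyl group $W$, so that the conjugated torus is generated by the finitely many tori $w_\sigma\circ v_\sigma(S')$. You instead take the generator to be the Hodge cocharacter $\mu_\tau$, use the standard fact that $M_{\tau,\ol{\Q}}$ is generated by the Galois conjugates of $\mu_\tau(\GG_m)$ --- equivalently, that the Mumford--Tate group of a special point is the smallest $\Q$-subgroup through which $\mu_\tau$ factors; this is the one input you quote without proof, and it plays exactly the role of the paper's appeal to the $\Q$-Zariski density of $\tau(\SSS)$ in $M_\tau$ --- and you get finiteness from $[E:\Q]<\infty$ plus your centraliser argument showing that a $G(\ol{\Q})$-conjugacy class of cocharacters meets $X_*(T_0)$ in a single $W$-orbit (the same mechanism as the paper's remark that two subtori of $T_0$ conjugate in $G(\ol{\Q})$ are $N_{G(\ol{\Q})}(T_0)$-conjugate). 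What your route buys: no need for a maximal torus defined over $\Q$ or for the finite group $V\leq\Aut(T_0)$, and the finite set $F\subseteq X_*(T_0)$ is a cleaner combinatorial object than a finite collection of subtori, giving the explicit bound $k\leq 2^{\abs{F}}$; what it costs is the (standard) finiteness of the reflex field degree and the cocharacter characterisation of the Mumford--Tate group. Both arguments conclude identically: $g_\tau M_{\tau,\ol{\Q}}g_\tau^{-1}$ is the subtorus of $T_0$ generated by a subset of a finite set, hence takes only finitely many values.
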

\begin{proof}Let~$\tau$ be a special point and~$T=M_\tau\leq G$ be its Mumford-Tate group. Then~$T$ is a Torus defined over~$\Q$, and~$\tau$ corresponds to a morphism~$\tau:\SSS\to T_\R\leq G_\R$, where~$\SSS$ denotes the Deligne torus. 
As~$S:=\tau(\SSS)$ is~$\Q$-Zariski dense in~$T$, the torus~$T_\CC$ is generated by~$\bigcup_{\sigma\in Gal(\ol{\Q}/\Q)} \sigma(S_\CC)$. 

There exists a maximal torus~$T_0$ defined over~${\Q}$ (\cite[Th. 18.2]{BorelLAG}). Let~$\rho:Gal(\ol{\Q}/\Q)\to \Aut(T_0)$ be the natural action. We recall that~$V:=\rho(Gal(\ol{\Q}/\Q))$ is a finite group.  Let~$W\simeq N_G(T_0)/Z_G(T_0)$ be the Weyl group of~$T_0$ in~$G$. It is a finite group and it acts on~$T_0$ by automorphisms. We recall that maximal tori defined over~$\ol{\Q}$ are~$G(\ol{\Q})$-conjugated, and that two subtori~$R,R'\leq T_0$ which are conjugated in~$G(\ol{\Q})$ are conjugated by an element of~$N_{G(\ol{\Q})}(T_0)$. 

Therefore there exists a subtorus~$S_0\leq {T_0}_{\ol{\Q}}$ which is $G(\ol{\Q})$-conjugated to~$S$. Then~$S_0$ is well defined up to the action of~$W$ and depends only on the conjugacy class of~$S$. In particular, it depends only on~$X$, but not on~$\tau$.

There are thus~$\ol{\Q}$-subtori~$S'\leq T'\leq {T_0}_{\ol{\Q}}$ and~$g\in G(\ol{\Q})$ such that~$S=gS'g^{-1}$ and~$T=gT'g^{-1}$.
We may assume~$S'=S_0$. For~$\sigma\in Gal(\ol{\Q}/\Q)$, the torus
\[
g\sigma(S)g^{-1}\leq g\sigma(T)g^{-1}=gTg^{-1}=T'\leq T_0
\] 
is conjugated to~$\sigma(S')=\sigma(g)\sigma(S)\sigma(g)^{-1}\leq \sigma(T_0)=T_0$. There is thus~$w_\sigma\in W$ such that~$g\sigma(S)g^{-1}=w_\sigma(\sigma(S'))=w_\sigma\circ v_\sigma(S')$, where~$v_\sigma=\rho(\sigma)$.

Then~$T'$ is generated by the~$w_\sigma\circ v_\sigma(S')$. Since~$w_\sigma$ and~$v_\sigma$ can take only finitely many values, and there are only finitely many possibilities~$T_1,\ldots, T_k$ for~$T'$. As~$T$ is conjugated to~$T'$, the conclusion follows.
\end{proof}
\subsubsection*{Remarks on the above proof}\label{rem:Chai Oort} Here are a few more details that can be found in~\cite{CO}.
\begin{enumerate}
\item \label{rem:Chai Oort1}
The Galois group acts on~$T_0$ via the Weyl group: we have~$V\leq W$.
\item \label{rem:Chai Oort2}
There are maximal tori~$T_0$ for which~$V=W$. These are called \emph{Weyl tori}. These are the most common maximal tori: in~$(T_0/N_G(T_0))(\Q)$, the set of exceptions is a ``thin'' subset in the sense of~\cite[Prop. 9.2]{SerreLMW}.
\item \label{rem:Chai Oort3}
There are special points~$\tau$ such that~$M_{\tau}$ is a Weyl torus, by~\cite[case~$m=0$ of Theorem 5.5]{CO}. 
\item \label{rem:Chai Oort4}
 In particular, there are special points~$\tau$ such that~$M_{\tau}$ is a maximal torus.
\end{enumerate}

\begin{proposition}\label{prop:param special}
For~$i\in\{1;\ldots;k\}$, let~$\tau_i$ be such that~$M_i:=M_{\tau_i}$ is conjugated to~$T_i$. 
We denote by~$N_i:=N_G(M_i)$ and let~$W_i\simeq G/N_G(M_i)$ be the conjugacy class of~$M_i$, as an algebraic variety over~$\Q$. We define~$W_i(\R)^+:=G(\R)^+/N_i(\R)\subseteq W(\R)$ the~$G(\R)^+$-conjugacy class of~$M_i$ and~$W_i(\Q)^+:=W_i(\Q)\cap W_i(\R)^+$. 
Then the maps
\begin{equation}\label{param maps pi}
p_i:gM_ig^{-1}\mapsto g\cdot \tau_i : W_i^+\to X
\end{equation}
are well-defined and we have
\begin{equation}\label{eq truc sigma}
\Sigma=\bigcup_{i=1}^{k} p_i(W_i(\Q)^+).
\end{equation}
\end{proposition}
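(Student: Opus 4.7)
The proof splits into two parts: verifying that each $p_i$ is well-defined as a map out of $W_i^+$, and establishing the equality~\eqref{eq truc sigma}. I would handle the two in turn.

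For well-definedness, suppose $g_1, g_2 \in G(\R)^+$ give the same element of $W_i(\R)^+$, so $g_1 M_i g_1^{-1} = g_2 M_i g_2^{-1}$ and hence $n := g_1^{-1}g_2 \in N_i(\R) \cap G(\R)^+$. One must show $n\cdot \tau_i = \tau_i$, i.e.\ that conjugation by $n$ fixes the Deligne homomorphism $\tau_i$. Since $n$ normalises $M_i$, conjugation by $n$ yields another Deligne homomorphism $n\cdot \tau_i:\SSS \to M_{i,\R}$ whose Mumford--Tate group is still $M_i$. The action of the normaliser on the finite set of such Deligne homomorphisms factors through the real Weyl group $N_G(M_i)(\R)/Z_G(M_i)(\R)$, and the needed rigidity is that the component of this Weyl action coming from $G(\R)^+$ fixes $\tau_i$, because $\tau_i$ is determined inside its $G(\R)^+$-orbit $X^+ \subseteq X$ by the subtorus $\tau_i(\SSS) \leq M_{i,\R}$ which is preserved under conjugation by $N_i(\R)\cap G(\R)^+$.

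For the equality, the inclusion $\bigcup_i p_i(W_i(\Q)^+) \subseteq \Sigma$ is immediate: if $gM_ig^{-1}\in W_i(\Q)^+$, then $gM_ig^{-1}$ is a $\Q$-subtorus of $G$, and $g\cdot \tau_i$ factors through $(gM_ig^{-1})_\R$, so its Mumford--Tate group equals the $\Q$-torus $gM_ig^{-1}$, making $g\cdot \tau_i$ a special point. Conversely, take $\tau\in\Sigma$. By Theorem~\ref{thm:finiteness conjugacy special}, there exist $i$ and $h\in G(\oQ)$ with $hM_\tau h^{-1} = T_i$; composing with an element conjugating $T_i$ to $M_i$ yields $\bar h\in G(\oQ)$ with $\bar h M_\tau \bar h^{-1} = M_i$. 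On the other hand, $\tau$ and $\tau_i$ both lie in the $G(\R)$-orbit $X$, so there exists $g_0\in G(\R)$ with $\tau = g_0\cdot \tau_i$, and then $M_\tau = g_0 M_i g_0^{-1}$ is a $\Q$-rational element of $W_i$. By enlarging the finite list of representatives $\tau_i$ so that each connected component of $X$ is covered, we may arrange $g_0\in G(\R)^+$; this puts $M_\tau$ in $W_i(\Q)^+$ and gives $p_i(M_\tau) = g_0\cdot\tau_i = \tau$.

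I expect the main obstacle to be the well-definedness of $p_i$: the claim that $N_i(\R)\cap G(\R)^+$ acts trivially on $\tau_i$ is subtle, because a priori the Weyl group $N_G(M_i)/Z_G(M_i)$ acts non-trivially on the finite set of Deligne homomorphisms into $M_{i,\R}$, and one must use the connectedness constraint to kill the surviving component. A clean fallback would be to parametrise instead by the quotient $G/Z_G(M_i)$ and then identify orbits under the residual finite Weyl action; this yields the same image set $p_i(W_i(\Q)^+)$ and preserves the conclusion.
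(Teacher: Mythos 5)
You have identified the right reduction for well-definedness (show that $n=g_1^{-1}g_2\in N_i(\R)\cap G(\R)^+$ fixes $\tau_i$), but your justification rests on the unproved assertion that conjugation by $n$ preserves the subtorus $\tau_i(\SSS)\leq M_{i,\R}$. Normalising the $\Q$-torus $M_i$ only gives an automorphism of $M_{i,\R}$, and $\tau_i(\SSS)$ is in general a proper subtorus of $M_{i,\R}$; nothing formal forces that automorphism to preserve it, and this is essentially the whole content of the well-definedness claim. (Your auxiliary rigidity claim, that a point of $X^+$ is determined within $X^+$ by its image torus, is true but also left unproved; it needs, e.g., that two such points have the same stabiliser $Z_{G(\R)^+}(\tau_i(\SSS))$ together with the fact that a maximal compact subgroup of $G(\R)^+$ is its own normaliser.) The paper proves the stronger statement $N_{G(\R)^+}(M_i)\leq Z_{G(\R)}(\tau_i(\SSS))=Stab_{G(\R)}(\tau_i)$ by a compactness argument: $Z_{G(\R)}(M_i)\leq K^+$, and $N_G(M_i)^0=M_i\cdot Z_G(M_i)^0$, so $N_{G(\R)^+}(M_i)$ is compact and stable under the Cartan involution $\theta=\ad\,\tau_i(I)$, hence lies in $K^+=Stab_{G(\R)}(\tau_i)$. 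Your fallback through $G/Z_G(M_i)$ does not repair this: a $\Q$-point of $G/N_G(M_i)$ only records that $gM_ig^{-1}$ is defined over $\Q$ (which is what Theorem~\ref{thm:finiteness conjugacy special} supplies), whereas a $\Q$-point of $G/Z_G(M_i)$ is a strictly stronger condition, so the image of $(G/Z_G(M_i))(\Q)^+$ may be smaller and the equality with $\Sigma$ is not automatically preserved.

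In the inclusion $\Sigma\subseteq\bigcup_i p_i(W_i(\Q)^+)$ there is a genuinely false step: from $\tau=g_0\cdot\tau_i$ with $g_0\in G(\R)$ you conclude $M_\tau=g_0M_ig_0^{-1}$. Mumford--Tate groups are $\Q$-groups and are not transported by conjugation by real elements: $g_0M_ig_0^{-1}$ need not be defined over $\Q$, and in general there is no containment either way between it and $M_\tau$ (already on the modular curve, a real $g_0$ moving $i$ to a CM point of a different imaginary quadratic field shows that $\tau=g_0\tau_i$ gives no control on $M_\tau$). The paper's argument for this inclusion is different: it uses the element $g\in G(\oQ)$ of Theorem~\ref{thm:finiteness conjugacy special} with $M_\tau=gM_ig^{-1}$ to conclude directly that the class of $M_\tau$ is a $\Q$-point of $W_i$, and never passes through a real element carrying $\tau_i$ to $\tau$. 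Your reverse inclusion is essentially the paper's (note only that $M_{g\tau_i}$ is \emph{contained} in the $\Q$-torus $gM_ig^{-1}$, not necessarily equal to it, which is all one needs), and your remark about enlarging the list of base points $\tau_i$ to reach every connected component of $X$ is a sensible observation --- the paper's subsequent theorem indeed works with $\Sigma\cap X^+$ --- but it amends the statement rather than proving it.
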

\begin{proof}[Proof that the maps~$p_i$ are well defined] We may assume that~$G=G^{ad}$. Let~$I\in \SSS(\R)\simeq \CC^\times$ be such that~$I^2=-1$. Then~$\theta:g\mapsto \tau_i(I)g\tau_i(I)^{-1}$ is a Cartan involution, and~$K=\{g\in G(\R)|\theta(g)=g\}$ is a maximal compact subgroup of~$G(\R)$ and~$K^+$ is a maximal compact subgroup of~$G(\R)^+$. We recall that~$K^+=Z_{G(\R)}(\tau_i(\SSS))$ and this~$K^+$ is also the stabiliser of~$Stab_{G(\R)}(\tau_i)$ of the point~$\tau_i\in X$. If we denote by~$[M_i]\in W$ the point associated to~$M_i$, we also have
\[
Stab_{G(\R)^+}([M_i])=N_{G(\R)^+}(M_i).
\] 

As~$\tau_i(\SSS)\leq M_i$ and~$M_i$ is a torus, we have~$M_i\leq Z_G(M_i)$ and~$Z_{G(\R)}(M_i)\leq  Z_{G(\R)}(\tau_i(\SSS))=K^+\leq K$. Therefore~$M_i(\R)$ is  fixed by~$\theta$ pointwise. We deduce that~$N_{G(\R)}(M_i)$ is stable under~$\theta$. 
It implies that~$N:=N_{G(\R)^+}(M_i)=N_{G(\R)}(M_i)\cap G(\R)^+$ is stable under~$\theta$. Then the restriction of~$\theta$ to~$N$ is a Cartan involution, and~$K_N:=K\cap N=K^+\cap N_{G(\R)}(M_i)$ is a maximal compact subgroup of~$N$. 
As~$M_i$ is reductive, we have~$N_G(M_i)^0=M_i\cdot Z_G(M_i)^0$. As the group~$M_i(\R)\leq Z_{G(\R)}(M_i)\leq K^+$ are compact, the group~$N_{G(\R)}(M_i)^0$ is compact, and so are~$N_{G(\R)}(M_i)$ and~$N$. Thus~$N=K_N\leq K^+=Z_{G(\R)}(\tau_i(\SSS))$, and the image of~$N$ in~$\Aut(M_i)$ centralises~$\tau_i(\SSS)$. We have proved
\[
N_{G(\R)^+}(M_i)\leq Z_{G(\R)}(\tau_i(\SSS))=Stab_{G(\R)}(\tau_i).
\]
It follows that~$Stab_{G(\R)^+}([M_i])\leq Stab_{G(\R)}(\tau_i)$, and that the map~$p_i$ is well defined.
\end{proof}
\begin{proof}[Proof of~\eqref{eq truc sigma}]
For~$\tau\in \Sigma$ and~$T:=M_{\tau}$, there exists, by Theorem~\ref{thm:finiteness conjugacy special}, some~$g\in G(\ol{\Q})$ and~$i\in\{1;\ldots;k\}$ such that~$T=gM_ig^{-1}$. As~$T$ is a conjugate of~$T_{\tau_i}$, and~$T$ is a torus defined over~$\Q$, we have~$gN_G(T_{\tau_i})\in W_i(\Q)$. This proves~$\Sigma\subseteq\bigcup_{i=1}^{k} p_i(W_i(\Q)^+)$.

We prove the reverse inclusion. Let~$g$ and~$i\in\{1;\ldots;k\}$ be such that~$gN_G(M_i)\in W_i(\Q)$. That is, the torus~$T:=gM_ig^{-1}$ is defined over~$\Q$. Note that~$\tau(\SSS)\leq M_i(\R)$, and thus~$g\tau(\SSS)g^{-1}\leq T$.
As~$T$ is defined over~$\Q$, it contains the Mumford-Tate group~$M_{g\tau}$. In particular, $M_{g\tau}$ is commutative, and thus is a torus, and~$g\tau=p_i(g)$ is a special point.
This concludes the proof of the reverse inclusion and finishes the proof of Proposition~\ref{prop:param special}.
\end{proof}
Using the Remark~\ref{rem:Chai Oort4} following Theorem~\ref{thm:finiteness conjugacy special}, we deduce the following.

\begin{theorem}Let~$W$ be the space of maximal tori of~$G^{ad}$, viewed as an algebraic variety over~$\Q$, and let~$W^+\leq W(\R)$ be the subset of $\R$-anisotropic maximal tori. Then there is a~$G(\R)^+$-equivariant map
\[
p:W^+\to X^+
\]
such that~$p(W(\Q)^+)=\Sigma\cap X^+$ and such that, for~$\tau\in \Sigma$, and~$T\in W(\Q)^+$, we have
\begin{equation}\label{map special max param}
p(T)=\tau\Leftrightarrow M_\tau\leq T.
\end{equation}
\end{theorem}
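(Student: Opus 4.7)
The approach is to apply the construction of Proposition~\ref{prop:param special} to a single distinguished representative: by Remark~\ref{rem:Chai Oort4}, pick $\tau_0\in X^+$ whose Mumford--Tate group $T_0:=M_{\tau_0}$ is a maximal torus of $G^{\ad}$. Since $T_0$ is the Mumford--Tate group of a special point, it is a $\Q$-torus; and since $\tau_0(\SSS)\leq T_0(\R)$ lies in the maximal compact $K^+=Z_{G^{\ad}(\R)^+}(\tau_0(\SSS))$ attached to the Cartan involution of $\tau_0$, the group $T_0(\R)$ is compact, so $T_0$ is $\R$-anisotropic and belongs to $W(\Q)^+$. Because any two $\R$-anisotropic maximal tori in a reductive $\R$-group are $G(\R)^+$-conjugate, $W^+$ is the single $G^{\ad}(\R)^+$-orbit of $T_0$.

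I would then define $p\colon W^+\to X^+$ by $p(gT_0g^{-1}):=g\cdot\tau_0$ for $g\in G^{\ad}(\R)^+$ and verify well-definedness by showing $N_{G^{\ad}(\R)^+}(T_0)\subseteq K^+=\mathrm{Stab}_{G^{\ad}(\R)^+}(\tau_0)$. This is a direct replay of the first half of the proof of Proposition~\ref{prop:param special}: the normaliser is compact, stable under the Cartan involution $\theta$ attached to $\tau_0$, and its image in $\Aut(T_0)$ centralises $\tau_0(\SSS)$. The $G^{\ad}(\R)^+$-equivariance of $p$ is built in.

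For the identity $p(W(\Q)^+)=\Sigma\cap X^+$, the inclusion $\subseteq$ comes from the second half of the proof of Proposition~\ref{prop:param special}: if $T=gT_0g^{-1}\in W(\Q)^+$, then $g\cdot\tau_0$ factors through the $\Q$-torus $T$, so its Mumford--Tate group is a subtorus of $T$, hence commutative, so $g\cdot\tau_0$ is special. For $\supseteq$, given $\tau\in\Sigma\cap X^+$, I would extend $M_\tau$ to a maximal $\Q$-torus $T$ of $G^{\ad}$ by choosing a maximal $\Q$-torus of the reductive $\Q$-subgroup $Z_{G^{\ad}}(M_\tau)$ (which exists by~\cite[Th.~18.2]{BorelLAG}) and noting that $M_\tau$ lies in its centre; the compactness $Z_{G^{\ad}(\R)}(M_\tau)\subseteq Z_{G^{\ad}(\R)}(\tau(\SSS))=K_\tau^+$ then forces $T(\R)$ to be compact, so $T\in W(\Q)^+$.

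The equivalence $p(T)=\tau\Leftrightarrow M_\tau\leq T$ has a straightforward forward direction from $M_{p(T)}\leq T$. The hard part will be the converse: writing $T=gT_0g^{-1}$, the cocharacter $g^{-1}\cdot\tau\colon\SSS\to T_{0,\R}$ belongs to $X^+$, and one must show $\tau_0$ is the unique such cocharacter. This rests on the $\R$-Zariski density of $\tau_0(\SSS)$ in $T_{0,\R}$ provided by the maximality $T_0=M_{\tau_0}$: any $G^{\ad}(\R)^+$-conjugate $h=g_0\cdot\tau_0$ factoring through $T_{0,\R}$ would satisfy $g_0T_0g_0^{-1}=T_0$ by density of the image, hence $g_0\in N_{G^{\ad}(\R)^+}(T_0)\subseteq\mathrm{Stab}(\tau_0)$ and $h=\tau_0$. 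This forces $g^{-1}\cdot\tau=\tau_0$, whence $\tau=g\cdot\tau_0=p(T)$.
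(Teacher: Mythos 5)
Your overall route is the one the paper intends: apply the construction of Proposition~\ref{prop:param special} to a representative $\tau_0$ whose Mumford--Tate group is a maximal torus (Remark~\ref{rem:Chai Oort4}), observe that the $\R$-anisotropic maximal tori form a single $G(\R)^+$-orbit, and re-run the normaliser computation for well-definedness; those parts, and the forward implication of~\eqref{map special max param}, are fine. The genuine gap is in your uniqueness step, which is precisely the content that goes beyond Proposition~\ref{prop:param special}. You deduce $g_0T_0g_0^{-1}=T_0$ from ``the $\R$-Zariski density of $\tau_0(\SSS)$ in $T_{0,\R}$ provided by the maximality $T_0=M_{\tau_0}$''. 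That density claim is false in general: $T_0=M_{\tau_0}$ only says no proper \emph{$\Q$-}subgroup of $T_0$ contains $\tau_0(\SSS)$; it does not give Zariski density over $\R$. Worse, in the adjoint group the weight cocharacter is killed (the adjoint Hodge structure has weight $0$), so $\tau_0^{ad}$ factors through $\SSS/\GG_m\simeq U(1)$ and its image is at most one-dimensional; as soon as $G^{ad}$ has rank $\geq 2$ it cannot be dense in a maximal torus. So the inference $g_0\in N_{G(\R)^+}(T_0)$ does not follow, and with it both the converse direction of~\eqref{map special max param} and the inclusion $\Sigma\cap X^+\subseteq p(W(\Q)^+)$ (which you obtain from that converse) are left unproved.

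The statement you want --- $\tau_0$ is the unique point of $X^+$ factoring through $T_{0,\R}$ --- is true, and you can get it from ingredients you already have, replacing density by compactness. If $h=g_0\cdot\tau_0\in X^+$ satisfies $h(\SSS)\leq T_0(\R)$, then, $T_0$ being commutative, $T_0(\R)^0$ centralises $h(\SSS)$ and hence lies in $Stab_{G(\R)^+}(h)=g_0K^+g_0^{-1}$, where $K^+=Stab_{G(\R)^+}(\tau_0)$; thus $g_0^{-1}T_0(\R)^0g_0\leq K^+$. Both $T_0(\R)^0$ and $g_0^{-1}T_0(\R)^0g_0$ are maximal tori of the compact connected group $K^+$ (they have full rank), hence are conjugate by some $k\in K^+$. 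Then $g_0k\in N_{G(\R)^+}(T_0(\R)^0)=N_{G(\R)^+}(T_0)$ (using that $T_0(\R)^0$ is Zariski dense in the anisotropic torus $T_0$), and the inclusion $N_{G(\R)^+}(T_0)\leq K^+$ that you already proved for well-definedness gives $g_0=(g_0k)k^{-1}\in K^+=Stab_{G(\R)^+}(\tau_0)$, so $h=\tau_0$. With this substitution your argument for the converse of~\eqref{map special max param}, and hence for $p(W(\Q)^+)=\Sigma\cap X^+$, goes through.
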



\section{Geometric hybrid orbits}\label{sec:geometric hybrid orbit}
\emph{The decomposition of generalised Hecke orbits into a disjoint union of finitely many geometric Hecke orbits, and the parametrisation of Hecke orbits were introduced in~\cite{APZ1} was an essential tool in order to define height functions and formulate lower bounds for the size of Galois orbits needed in the proof of the André-Pink-Zannier conjecture. Here we define geometric hybrid orbits and their parametrisation, and prove that every hybrid orbit is a union of finitely many geometric hybrid orbits. This is not necessarily a disjoint union.}

\begin{theorem}\label{thm:geom in hybrid}
Let~$(G,X)$ be a Shimura datum, let~$x'\in X$ and let $(M_{x'},X_{x'},x')\leq (G,X)$ be the smallest Shimura subdatum
such that~$x'\in X_{x'}$. Let~$Z=Z_{G}(M^{der}_{x'})$ be the centraliser
of the derived subgroup~$M^{der}_{x'}$ of~$M_{x'}$ and let~$N=N_{G}(Z(M_{x'}))$ be the normaliser of the centre~$Z(M_{x'})$ of~$M_{x'}$. Let
\begin{equation}\label{eq:W hybride}
W:=G/(N\cap Z),
\end{equation}
as a~$\Q$-algebraic variety, and let~$W^+:=G(\R)^+/(N(\R)\cap Z(\R)\cap G(\R)^+)\subseteq W(\R)$.

Then there exists a unique~$G(\R)^+$ equivariant map~$\pi:W^+\to X$ which maps the neutral coset~$1\cdot (N\cap Z)$ to~$x'$.

Moreover, we have
\begin{equation}\label{eq:geom in hybrid}
\pi(W(\Q)\cap W^+)\subseteq \Sigma_{X}(M_{x'},X_{x'},x').
\end{equation}
\end{theorem}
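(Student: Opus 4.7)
The plan is to construct $\pi$ by descending the orbit map $g \mapsto g \cdot x'$ along the quotient $G(\R)^+ \to W^+$. Writing $M := M_{x'}$, existence and uniqueness of a $G(\R)^+$-equivariant $\pi$ with $\pi(1 \cdot (N\cap Z)) = x'$ amount to the stabiliser containment
\[
(N\cap Z)(\R) \cap G(\R)^+ \subseteq \operatorname{Stab}_{G(\R)}(x') = Z_{G(\R)}(x'(\SSS)).
\]
I would begin with a structural observation: any $g \in N \cap Z$ normalises the almost direct product $M = Z(M)^0 \cdot M^{der}$ (it centralises $M^{der}$ and normalises $Z(M)$), so $N \cap Z \subseteq N_G(M)$. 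Furthermore $Z_G(M) \subseteq N \cap Z$, and the conjugation action on $Z(M)$ embeds $(N\cap Z)/Z_G(M)$ into the discrete group $\Aut(Z(M))$; this quotient is therefore finite. Consequently $(N\cap Z)^0 = Z_G(M)^0$, which centralises all of $M \supseteq x'(\SSS)$ and thus stabilises $x'$ automatically.

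The remaining point is to control the finite component group on the $\R$-points inside $G(\R)^+$. I would adapt the Cartan-involution argument from the proof of Proposition~\ref{prop:param special}. Setting $\theta := \operatorname{int}(x'(i))$ for $i \in \SSS(\R)$ with $i^2 = -1$, a direct computation based on $x'(i) \in M(\R)$ shows that $\theta$ preserves $N \cap Z$ and fixes $Z_G(M)$ pointwise (since every element of $Z_G(M)$ commutes with $x'(i) \in M$). The induced action of $\theta$ on $(N\cap Z)/Z_G(M) \hookrightarrow \Aut(Z(M))$ is trivial because $\theta$ is inner by an element of $M$ and so acts trivially on $\Aut(Z(M))$. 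A connectedness/compactness argument, relativised to $Z_G(M)$, will then force every element of $(N\cap Z)(\R)\cap G(\R)^+$ to be $\theta$-fixed modulo $Z_G(M)(\R)$, giving the desired containment in $Z_{G(\R)}(x'(\SSS))$.

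For the inclusion~\eqref{eq:geom in hybrid}, suppose $g(N\cap Z) \in W(\Q)\cap W^+$, equivalently $g^{-1}\sigma(g) \in (N\cap Z)(\ol{\Q})$ for every $\sigma \in \Gal(\ol{\Q}/\Q)$. Since $N\cap Z \subseteq N_G(M)$, the conjugate $gMg^{-1}$ is $\Q$-defined, and since $g^{-1}\sigma(g)$ centralises $M^{der}$, the restriction $c_g \colon M^{der} \to gM^{der}g^{-1}$ is $\Q$-rational. Passing to adjoints produces a $\Q$-rational isomorphism $c_g^{ad} \colon M^{ad} \xrightarrow{\sim} (gMg^{-1})^{ad}$. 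Setting $x'' := g\cdot x'$, its Mumford-Tate group satisfies $M_{x''} \subseteq gMg^{-1}$, and the adjoint Mumford-Tate of $x''^{ad}$ equals $(gMg^{-1})^{ad}$; one checks that this coincides with $M^{ad}_{x''}$, both sides being the adjoint of a semisimple cover of the same group. The composition
\[
\Phi \colon (M, X_{x'}) \longrightarrow (M^{ad}, X^{ad}_{x'}) \xrightarrow{c_g^{ad}} (M^{ad}_{x''}, X^{ad}_{x''})
\]
is then a $\Q$-rational morphism of Shimura data with $\Phi \circ x' = x''^{ad}$, which places $x''$ in $\Sigma_X(M_{x'}, X_{x'}, x')$.

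The hard part will be the Cartan-involution/compactness step of the second paragraph. In the special (torus) case of Proposition~\ref{prop:param special} the compactness of $M_i(\R)$ propagates directly to $N_G(M_i)(\R)$, but here $M(\R)$ is only compact modulo $M^{der}(\R)$, so the argument has to be carried out relative to $Z_G(M)$; exploiting that the finite quotient $(N\cap Z)/Z_G(M)$ injects into $\Aut(Z(M))$, that $\theta$ acts trivially on this quotient, and that $G(\R)^+$ is connected, will be the delicate point forcing the stabiliser containment in the general hybrid setting.
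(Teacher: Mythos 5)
Your reduction of existence and uniqueness to the stabiliser containment $(N\cap Z)(\R)\cap G(\R)^+\subseteq Z_{G(\R)}(x'(\SSS))$, and your identification $(N\cap Z)^0=Z_G(M)^0$ via the injection $(N\cap Z)/Z_G(M)\hookrightarrow \Aut(Z(M))$, agree with the paper; your treatment of~\eqref{eq:geom in hybrid} is essentially the paper's own argument (rationality of $gZ(M)g^{-1}$, of $gM^{der}g^{-1}$ and of the conjugation isomorphism on $M^{der}$, then transport of Hodge genericity to identify $(M^{ad}_{x''},X^{ad}_{x''})$), and that half is fine. The gap is in the component-group step. The intermediate statement you aim for --- every $n\in(N\cap Z)(\R)\cap G(\R)^+$ is $\theta$-fixed modulo $Z_G(M)(\R)$ --- is true by exactly the computation you sketch (since $x'(i)\in M(\R)$ centralises $Z(M)$ and $Z_G(M)$, the inner involution $\theta$ acts trivially on $(N\cap Z)/Z_G(M)$), but it holds for \emph{all} of $(N\cap Z)(\R)$, makes no use of membership in $G(\R)^+$, and does not imply the desired containment. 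Concretely, take $G=\PGL_2$ (the adjoint modular-curve datum) and $x'$ a special point, so $M=M_{x'}$ is an anisotropic maximal torus $T$, $M^{der}=1$ and $N\cap Z=N_G(T)$: the class of $\diag(1,-1)$ lies in $N_G(T)(\R)$ and is $\theta$-fixed modulo $Z_G(M)(\R)=T(\R)$ (indeed $\theta(n)n^{-1}=x'(-1)\in T(\R)$), yet it inverts $T$ and sends $x'$ to its complex conjugate, so it does not centralise $x'(\SSS)$. It is saved from contradicting the theorem only because it lies outside $G(\R)^+$ --- a fact your criterion cannot detect. So ``$\theta$-fixed mod $Z_G(M)(\R)$'' is not the right target, and the ``connectedness/compactness argument relativised to $Z_G(M)$'' that is supposed to finish the proof is precisely the missing content, which you yourself flag as unresolved.

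You also misdiagnose the obstacle: the issue is not that $M(\R)$ is only compact modulo $M^{der}(\R)$. After reducing to $G=G^{ad}$ (a harmless step you never take), $Z_G(M)(\R)\subseteq Z_{G(\R)}(x'(\SSS))\subseteq G(\R)^{\theta}$ is compact; hence $(N\cap Z)^0(\R)$ is compact and, the component group being finite, $(N\cap Z)(\R)$ is compact. The paper then concludes exactly as in the special-point case: $N\cap Z$ is $\theta$-stable (because $\theta=\mathrm{int}(x'(i))$ with $x'(i)\in M(\R)$ preserves $Z(M)$ and $M^{der}$), a $\theta$-stable compact subgroup of $G(\R)$ is contained in $K=G(\R)^{\theta}$, and $K\cap G(\R)^+=K^+=\mathrm{Stab}_{G(\R)^+}(x')$, the last equality coming from connectedness of maximal compact subgroups of $G(\R)^+$ together with the weight computation identifying $\mathrm{Lie}(K)$ with the $(0,0)$-part of $\mathfrak{g}$. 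This is where membership in $G(\R)^+$ genuinely enters; it is the idea missing from your outline, and with it (rather than your quotient-by-$Z_G(M)$ criterion) your first paragraph becomes the paper's proof.
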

\begin{proof}[Proof of Theorem~\ref{thm:geom in hybrid}]
Since~$G(\R)^+$ acts transitively on~$W^+$,  the map~$\pi$ is unique and well defined if and only if~\begin{equation}\label{eq:NZG}
N(\R)\cap Z(\R)\cap G(\R)^+\leq Stab_{G(\R)}(x').
\end{equation}
Without loss of generality, we may assume~$G=G^{ad}$.
\begin{proof}[Proof of~\eqref{eq:NZG}]We write~$M=M_{x'}$ for simplicity.
As~$Z(M)$ is a torus,~$Aut(Z(M))$ is of dimension zero, and the image of the map~$(N\cap Z)^0\to Aut(Z(M))$ is connected and of dimension zero. We deduce~$(N\cap Z)^0\leq Z_G(Z(M))$ and 
\begin{equation}\label{eq:NZvsZ}
(N\cap Z)^0\leq Z(Z(M))\cap N\cap Z=Z_G(Z(M))\cdot Z_G(M^{der})=Z_G(M).
\end{equation}
As~$Z_G(M)$ is compact, so is~$(N\cap Z)^0$ and so is~$N\cap Z$.

Let~$\theta:G(\R)\to G(\R)$ be the Cartan involution associated to~$x'$. Then~$M$ is invariant
under~$\theta$. Thus~$Z(M)$ and~$M^{der}$ and~$N_G(M^{der})$ and~$Z_G(M^{der})$ 
are invariant under~$\theta$. Thus,~$N\cap Z$ is a compact group which is stable under~$\theta$.

Let~$K$ be the maximal compact subgroup of~$G(\R)$ fixed by~$\theta$. Then~$N\cap Z\leq K$.
Moreover,~$Stab_{G(\R)^+}(x')=G(\R)^+\cap K=K^+$. This implies~\eqref{eq:NZG}. It follows that the map
\end{proof}

We can interpret~$W(\Q)$ as follows. As the map
\[
G/(N\cap Z)\to G/N\times G/Z
\]
is injective, we have,
\begin{multline}
\forall g\in G(\C), g(N\cap Z)\in (G/(N\cap Z))(\Q)\\ \Leftrightarrow 
 gN\in (G/N)(\Q)\text{ and }
 gZ\in (G/Z)(\Q).
\end{multline}
We have~$gN\in (G/N)(\Q)$ if and only if the~$\C$-torus $gZ(M_{x'})g^{-1}$ is defined over~$\Q$.
We have~$gZ\in (G/Z)(\Q)$ if and only if $gM^{\der}_{x'}g^{-1}$ is defined over~$\Q$ \emph{and} the
restriction of~$\C$-morphism~$AD_g:h\mapsto ghg^{-1}$ induces a isomorphism \emph{defined over~$\Q$}
from~$M^{der}_{x'}$ to~$gM^{\der}_{x'}g^{-1}$.

\begin{proof}[Proof of~\eqref{eq:geom in hybrid}]
Consider~$g\in G(\R)^+$ such that~$g(N\cap Z)\in (G/(N\cap Z))(\Q)$. Then~$gM_{x'}^{der}g^{-1}$ and~$gZ(M)_{x'}g^{-1}$ are defined over~$\Q$. Therefore~$gM_{x'}g^{-1}$ is defined over~$\Q$ and contains~$y:=g\cdot x'$ and thus~$M_y\leq gM_x'g^{-1}$. As~$gZ\in (G/Z)(\Q)$, the conjugation-by-$g$ isomorphism~$M_{x'}^{der}\to gM_{x'}^{der}g^{-1}$ induces an isomorphism of Shimura data
\[
\phi:
(M_{x'}^{ad},X^{ad}_{x'})\to ((gM_{x'}^{der}g^{-1})^{ad}, (gX_{x'})^{ad})
\]
which maps~$x'^{ad}$ to the image~$y'$ of~$y$ in~$(gX_{x'})^{ad}$. As~$x'^{ad}$ is Hodge generic in~$X^{ad}_{x'}$, so is~$y'$ in~$(gX_{x'})^{ad}$. We have thus
\[
((gM_{x'}^{der}g^{-1})^{ad}, (gX_{x'})^{ad},y')=(M^{ad}_y,X^{ad}_y,y^{ad}).
\]
The existence of the isomorphism~$\phi$ implies that~$y\in\Sigma_{X}(M_{x'},X_{x'},x')$. It follows~$y\in  \Sigma^g(x')$, and we deduce~\eqref{eq:geom in hybrid}.
\end{proof}
This concludes the proof of Theorem~\ref{thm:geom in hybrid}.
\end{proof}

\begin{definition}\label{def: orbite geo hybride}
The \emph{geometric hybrid orbit of~$x'$} is defined as
\[
\Sigma^{g}(x'):= \pi(W(\Q)^+).
\]
\end{definition}

Note that~$N\cap Z$ is reductive, and thus that~$W$ is an affine variety. We can consider global or local height function~$W(\Q)\to \Z_{\geq1}$. This may be helpful in defining height functions on the geometric Hecke orbit.
Together with the next Theorem, it would be a notion of height functions on the hybrid Hecke orbit.

\subsection{Finiteness Theorem}
The following is the analogue of~Theorem~\ref{thm:finiteness conjugacy special} 
\begin{theorem} \label{thm:finiteness geometric hybrid}
Let~$\Sigma_X(M,X_M,x)$ be a hybrid Hecke orbit and let us denote by~$P(\Sigma_X(M,X_M,x))$ the set of subsets of~$\Sigma_X(M,X_M,x)$.
Then
\[
\{\Sigma_g(x')|x'\in \Sigma_X(M,X_M,x)\}\subseteq P(\Sigma_X(M,X_M,x))\}
\]
is a finite set of subsets.
\end{theorem}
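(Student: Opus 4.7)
The plan is to mirror the proof of Theorem~\ref{thm:finiteness conjugacy special}, classifying up to $G(\Q)^+$-conjugacy the data that determines a geometric hybrid orbit. First, I would observe that $\Sigma^g(x')$ depends only on the $G(\Q)^+$-conjugacy class of the triple $(M^{\der}_{x'}, Z(M_{x'}), x')$: for $g_0\in G(\Q)^+$, conjugation by $g_0$ induces a $\Q$-isomorphism between the parametrising varieties $G/(N\cap Z)$ attached to $x'$ and to $g_0\cdot x'$, intertwining the two maps $\pi$ to $X$, so that $\Sigma^g(g_0\cdot x')=\Sigma^g(x')$. It thus suffices to show that these triples fall in finitely many $G(\Q)^+$-conjugacy classes.

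The hybrid-orbit hypothesis provides a surjective $\Q$-morphism $\Phi:M\twoheadrightarrow M^{\ad}_{x'}$ with $\Phi\circ x=x'^{\ad}$, so $M^{\ad}_{x'}$ is a $\Q$-semisimple quotient of the fixed group $M$. Such quotients correspond to Galois-stable subsets of the simple factors of $M^{\ad}_{\oQ}$, hence fall in finitely many isomorphism classes. Since $M^{\der}_{x'}$ is an isogenous cover of $M^{\ad}_{x'}$ with fundamental group of bounded order, only finitely many isomorphism types of $M^{\der}_{x'}$ arise. By a theorem of Richardson on finiteness of conjugacy classes of connected reductive subgroups of bounded type in a reductive group, each isomorphism type yields only finitely many $G(\oQ)$-conjugacy classes of embeddings $M^{\der}_{x'}\hookrightarrow G$. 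A Galois-cohomology argument in the spirit of the proof of Theorem~\ref{thm:finiteness conjugacy special} (using the finite component group of $N_G(M^{\der}_{x'})/Z_G(M^{\der}_{x'})$) then bounds the $\Q$-rational forms within each $G(\oQ)$-class.

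Once $M^{\der}_{x'}$ is fixed up to $G(\Q)$-conjugacy, the centre $Z(M_{x'})$ is a $\Q$-subtorus of the reductive group $Z_G(M^{\der}_{x'})$, which (modulo a bounded isogeny) serves as the Mumford--Tate torus of the composite $\SSS\xrightarrow{x'}M_{x'}\twoheadrightarrow M_{x'}/M^{\der}_{x'}$. Applying Theorem~\ref{thm:finiteness conjugacy special} to a Shimura datum of CM type associated to this central cocharacter inside $Z_G(M^{\der}_{x'})$ bounds the $G(\oQ)$-conjugacy class of $Z(M_{x'})$ by finitely many options, and the same Galois-cohomology step cuts down to finitely many $\Q$-forms. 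The point $x'$ itself is then determined up to its stabiliser in $G(\R)^+$, which preserves $\Sigma^g(x')$. Combining the steps, finitely many triples remain modulo $G(\Q)^+$-conjugacy, proving the theorem.

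The hard part is the third step: packaging the centre $Z(M_{x'})$ inside a Shimura datum on the centraliser $Z_G(M^{\der}_{x'})$ in a way that makes Theorem~\ref{thm:finiteness conjugacy special} directly applicable, since the centraliser does not inherit a canonical Shimura structure from $(G,X)$. A safer alternative is to generalise directly the Weyl-group argument of that theorem by fixing a common $\Q$-Levi subgroup of $G$ into which all relevant $M_{x'}$ can be $\oQ$-conjugated, and then using the finite Weyl group together with the Galois action to bound all $\Q$-conjugacy classes of $(M^{\der}_{x'}, Z(M_{x'}))$ simultaneously.
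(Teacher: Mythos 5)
The fatal step is the reduction: ``it thus suffices to show that these triples fall in finitely many $G(\Q)^+$-conjugacy classes.'' That statement is false, so the rest of your outline cannot be repaired. Every hybrid orbit contains all special points, and for a special point $x'$ the triple is $(1,Z(M_{x'}),x')$ with $Z(M_{x'})=M_{x'}$ a CM torus; as $x'$ runs over special points these tori have infinitely many distinct splitting fields, hence lie in infinitely many $G(\Q)$-conjugacy classes (indeed infinitely many $\Q$-isomorphism classes). The same problem undermines your Galois-cohomology steps: passing from a $G(\oQ)$-conjugacy class of embeddings to $\Q$-forms up to $G(\Q)$-conjugacy is obstructed by (a kernel inside) $H^1\bigl(\Q, Z_G(M^{\der}_{x'})\cap N_G(Z(M_{x'}))\bigr)$, and this group contains a positive-dimensional torus, so the relevant $H^1$ is typically infinite; the finite component group of $N_G(M^{\der}_{x'})/Z_G(M^{\der}_{x'})$ does not control it. Note also that Theorem~\ref{thm:finiteness conjugacy special} only asserts finiteness up to $G(\oQ)$-conjugacy, not up to $G(\Q)$-conjugacy, so invoking it cannot ``cut down to finitely many $\Q$-forms''; your first observation (that $G(\Q)^+$-conjugate triples give the same geometric hybrid orbit) is correct but goes in the harmless direction.

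The paper's proof succeeds precisely because it never tries to make the conjugator rational. A geometric hybrid orbit is $\pi(W(\Q)^+)$ with $W=G/(N\cap Z)$, and a class $\gamma\cdot(N\cap Z)$ is a \emph{rational point of $W$} as soon as $\gamma\in G(\R)^+$ conjugates $Z(M_{x'})$ to a $\Q$-torus and carries $\rho_{x'}$ to a $\Q$-rational Lie algebra morphism --- no rationality of $\gamma$ itself is needed. Accordingly, the paper fixes the $G(\R)^+$-conjugacy class of $\rho_{x'}:\mathfrak{m}^{\ad}\to\mathfrak{g}$ (finitely many such real classes since both algebras are semisimple), shows via a Cartan-involution argument that any two points of the orbit with the same class are related by some $g'\in G(\R)^+$ centralising $M^{\der}$ up to the given identification, and then uses a compact real maximal torus $T\leq F(\R)$ and subsets $E'$ of its finite Weyl group $W_T$ to pin down the finitely many possible positions of $M_{x'}$ over $M^{\der}_{x'}$; two points with the same $\rho$-class and the same $E'$ are then shown to define the same rational point construction, hence the same geometric hybrid orbit. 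This is how a single geometric hybrid orbit absorbs infinitely many $G(\Q)$-conjugacy classes of triples --- the phenomenon your proposed route would have to deny.
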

In other words, the hybrid orbit~$\Sigma_X(M,X_M,x)$ contains a finite subset~$\{y_1;\ldots;y_k\}\subseteq \Sigma_X(M,X_M,x)$ such that
\begin{equation}\label{eq:yis}
\forall x'\in \Sigma_X(M,X_M,x), \exists! y\in \{y_1;\ldots;y_k\}, \Sigma^g(x')=\Sigma^{g}(y).
\end{equation}
\begin{corollary}\label{cor:finiteness}
A hybrid Hecke orbit is a finite union of geometric hybrid orbits.
\end{corollary}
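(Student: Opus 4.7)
The plan is to derive the corollary directly from Theorem~\ref{thm:finiteness geometric hybrid} by checking two set-theoretic inclusions, so there is essentially nothing to prove beyond unpacking definitions. First I would observe that every $x'$ in the hybrid orbit lies in its own geometric hybrid orbit: the neutral coset $1\cdot(N\cap Z)$ is a $\Q$-rational point of $W$ lying in $W^+$, and by the defining property of $\pi$ (the unique $G(\R)^+$-equivariant map sending this coset to $x'$) we have $x'\in \pi(W(\Q)^+)=\Sigma^g(x')$. Combined with property~\eqref{eq:yis} applied to the representatives $y_1,\ldots,y_k$ furnished by Theorem~\ref{thm:finiteness geometric hybrid}, this yields
\[
\Sigma_X(M,X_M,x)\subseteq \bigcup_{i=1}^{k}\Sigma^g(y_i).
\]

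For the reverse inclusion I would use transitivity of the hybrid orbit relation. Pick $y_i\in\Sigma_X(M,X_M,x)$, so that there exists a morphism of Shimura data $\Phi:(M,X)\to (M^{ad}_{y_i},X^{ad}_{y_i})$ with $\Phi\circ x=y_i^{ad}$. By~\eqref{eq:geom in hybrid}, any $z\in \Sigma^g(y_i)$ lies in $\Sigma_X(M_{y_i},X_{y_i},y_i)$, so there exists $\Psi:(M_{y_i},X_{y_i})\to (M^{ad}_z,X^{ad}_z)$ with $\Psi\circ y_i=z^{ad}$. Since the target is an adjoint Shimura datum, $\Psi$ factors through the adjoint quotient, descending to a morphism $\Psi^{ad}:(M^{ad}_{y_i},X^{ad}_{y_i})\to (M^{ad}_z,X^{ad}_z)$, and the composition $\Psi^{ad}\circ \Phi:(M,X)\to(M^{ad}_z,X^{ad}_z)$ sends $x$ to $z^{ad}$. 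This witnesses $z\in \Sigma_X(M,X_M,x)$, so each $\Sigma^g(y_i)$ is contained in the original hybrid orbit.

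Putting the two inclusions together gives the desired finite (in general non-disjoint) decomposition. The substantive content of the corollary is Theorem~\ref{thm:finiteness geometric hybrid}, which is taken as given; the only subtle point in the present deduction is the transitivity step above, but this is routine once one passes through the adjoint Shimura datum of $M_{y_i}$, so no serious obstacle arises.
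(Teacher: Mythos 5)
Your deduction is correct and is essentially the argument the paper leaves implicit: Theorem~\ref{thm:finiteness geometric hybrid} together with the fact that each~$x'$ lies in its own geometric hybrid orbit (the neutral coset, as in Proposition~\ref{prop:41}) gives one inclusion, and~\eqref{eq:geom in hybrid} plus transitivity of the hybrid-orbit relation gives the other. The one point you gloss is the assertion that~$\Psi$ factors through the adjoint quotient \emph{because} the target is adjoint: an arbitrary homomorphism into an adjoint group need not kill the centre, so you need the extra (easy) observation that~$\Psi$ is surjective. Indeed, since~$y_i$ is Hodge generic in~$X_{y_i}$, the Mumford--Tate group of~$\Psi\circ y_i=z^{ad}$ equals~$\Psi(M_{y_i})$, and it is~$M^{ad}_z$ because~$(M_z,X_z)$ is the smallest datum through~$z$; surjectivity then forces~$\Psi(Z(M_{y_i}))\subseteq Z(M^{ad}_z)=\{1\}$, and the factorisation~$\Psi^{ad}$ with~$\Psi^{ad}\circ y_i^{ad}=z^{ad}$ follows. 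Alternatively you can bypass general transitivity altogether: the proof of Theorem~\ref{thm:geom in hybrid} produces, for each~$z\in\Sigma^g(y_i)$, an isomorphism of adjoint Shimura data~$(M^{ad}_{y_i},X^{ad}_{y_i})\to(M^{ad}_z,X^{ad}_z)$ carrying~$y_i^{ad}$ to~$z^{ad}$, and composing this isomorphism with~$\Phi$ exhibits~$z\in\Sigma_X(M,X_M,x)$ directly, which is the shortcut the paper's own construction suggests.
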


As~$W(\Q)$ is countable, the sets~$W(\Q)^+$ and~$\Sigma^g(x')$ are countable.
The following is thus a consequence of Corollary~\ref{cor:finiteness}.
\begin{corollary} A hybrid Hecke orbit is countable.
\end{corollary}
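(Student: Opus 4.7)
The statement to prove is that a hybrid Hecke orbit is countable. The plan is to combine two ingredients: the finiteness decomposition supplied by Corollary~\ref{cor:finiteness}, and the countability of each geometric hybrid orbit.

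First I would invoke Corollary~\ref{cor:finiteness}, which tells us that a hybrid Hecke orbit $\Sigma_X(M,X_M,x)$ can be written as a finite union
\[
\Sigma_X(M,X_M,x) = \bigcup_{i=1}^{k} \Sigma^{g}(y_i)
\]
for representatives $y_1,\ldots,y_k$ as in~\eqref{eq:yis}. This reduces the problem to showing that each individual geometric hybrid orbit $\Sigma^{g}(y_i)$ is countable, since a finite union of countable sets is countable.

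Next I would use the parametrisation supplied by Theorem~\ref{thm:geom in hybrid} and Definition~\ref{def: orbite geo hybride}. By definition, the geometric hybrid orbit of $y_i$ is the image
\[
\Sigma^{g}(y_i) = \pi(W_i(\Q)^+),
\]
where $W_i$ is the affine $\Q$-algebraic variety attached to $y_i$ in~\eqref{eq:W hybride} and $W_i(\Q)^+ = W_i(\Q)\cap W_i(\R)^+$. Since $W_i$ is an algebraic variety defined over $\Q$, its set of $\Q$-rational points $W_i(\Q)$ is countable (it embeds into $\Q^n$ for some affine embedding), hence so is the subset $W_i(\Q)^+$. The image $\pi(W_i(\Q)^+)$ of a countable set is therefore countable.

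Combining these observations, the hybrid Hecke orbit is a finite union of countable sets and is thus itself countable, which concludes the proof. The only conceptual content is the reduction step (Corollary~\ref{cor:finiteness}); the rest is a straightforward cardinality argument, so no real obstacle is anticipated.
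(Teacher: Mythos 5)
Your proof is correct and follows essentially the same route as the paper: both argue that $W(\Q)$, being the rational points of a $\Q$-variety, is countable, hence so is each geometric hybrid orbit $\pi(W(\Q)^+)$, and then invoke Corollary~\ref{cor:finiteness} to present the hybrid orbit as a finite union of such sets.
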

\begin{proof}[Proof of Theorem~\ref{thm:finiteness geometric hybrid}]Let~$\Sigma=\Sigma_X(M,X_M,x)$ be a hybrid Hecke orbit. For every~$x'\in\Sigma$, let~$(M_{x'},X_{x'})\leq (G,X)$ be 
the smallest subdatum such that~$x'\in X_{x'}$. There exists a unique morphism
\[
\phi_{x'}:
M^{ad}\to M^{ad}_{x'}
\]
which maps~$x^{ad}$ to~$x'^{ad}$. We consider the corresponding Lie algebra morphism
\[
\rho_{x'}:
\mathfrak{m}^{ad}\xrightarrow{d\phi_{x'}}
\mathfrak{m}^{ad}_{x'}\simeq \mathfrak{m}^{der}_{x'}\hookrightarrow\mathfrak{g}.
\]
We recall that, because~$\mathfrak{m}^{ad}$ and~$\mathfrak{g}$ are semisimple, there are finitely many~$G({\R})^+$-conjugation classes of Lie algebra morphisms
\[
\mathfrak{m}_{\R}^{ad}\to \mathfrak{g}_{\R}.
\]
It is enough to prove that for every~$\rho:\mathfrak{m}^{ad}\to \mathfrak{g}$, the set
\[
\Sigma_\rho:=\{x'\in \Sigma|\exists g\in G(\R)^+, \rho_{x'}=g\rho g^{-1}\}
\]
is a finite union of geometric hybrid orbits. We may assume that~$\Sigma_\rho\neq\emptyset$, and without loss of
generality we may assume~$\rho=\rho_y$
for some~$y\in \Sigma_X(M,X_M,x)$. Then~$M_y^{der}\leq G$ is the
semisimple $\Q$-subgroup with Lie algebra~$\mathfrak{m}_y^{der}=\rho(\mathfrak{m}^{ad})$. 
We consider~$Z:=Z_G(M_y^{der})$ and~$L:=M_y^{der}\cdot Z^0$ and~$F:=L/M_y^{der}$. Let~$F^{nc}\leq F^{der}(\R)$ be the product of the non compact~$\R$-quasi-factors of~$F$. 
The generic Mumford-Tate group~$H'$ of~$X_L:=L(\R)\cdot y=L^{nc}\cdot y$ satisfies~$y(\C^{\times})\leq L^{nc}\leq H'(\R)\leq L(\R)$, and~$(H',X_L)$ is a Shimura datum (\cite[Lem. 3.3]{U}). Let~$H:=H'/M_y^{der}$ and let~$(H',X_L)\to (H,X_H)$ be the corresponding quotient Shimura datum.
Then the image of~$(M_y,X_y)$ in~$(H,X_H)$ is a CM Shimura datum~$(M_y^{ab},y^{ab})$. 
Let~$T\leq F(\R)$ be a maximal torus such that~$T/Z(F)^0(\R)$ is compact, and let~$W_T=N_F(T)/Z_F(T)$ be the Weyl group of~$T$ in~$F$. There exists~$f\in F(\R)^+$ such that
\[
S:=fy^{ab}(\C^{\times})f^{-1}\leq fM^{ab}(\R)f^{-1}\leq T.
\]
Using the same argument as in the proof of Theorem~\ref{thm:finiteness conjugacy special} and the Remarks that follow, we see that there exists~$E\subseteq W_T$ such that
\[
fM^{ab}(\R)f^{-1}\text{ is generated by }\bigcup_{w\in E} w(S).
\]

We consider~$x'\in \Sigma_{\rho}$ and~$g\in G(\R)^+$ such that~$\rho_{x'}=g\rho g^{-1}$. 
We claim that there exists~$g'=gz\in gZ(\R)^+$ such that
\[
x'=g'\cdot y.
\]
\begin{proof}[Proof of the claim] We have~$M^{der}_{x'}=gM^{der}_{y}g^{-1}$ by assumption. Let~$K_{x'}:=Z_{M_x^{der}(\R)^+}(x'(\C^\times))\leq M_x^{der}(\R)^+$ and~$K_y:=Z_{M_y^{der}(\R)^+}(y(\C^\times))\leq M^{der}_{y}$ be the maximal compact subgroups associated with~$x'$ and~$y$.
Then we have~$K_{x'}=gK_{y}g^{-1}$. It follows that he Cartan involutions~$\Theta_{x'}=\Theta_{g\cdot y}:G\to G$ have the same restrictions~$\Theta_{x'}|_{M_{x'}^{der}},\Theta_{g\cdot y}|_{M_{x'}^{der}}:{M_{x'}^{der}}\to {M_{x'}^{der}}$. Let~$z'\in G(\R)^+$ be such that~$\Theta_{x'}=z'\Theta_{gy}{z'}^{-1}$. Then~$Ad_{z'}|_{M_{x'}^{der}}=Id_{M_{x'}^{der}}$, that is~$z'\in Z_G({M_{x'}^{der}})(\R)$. This implies the claim with~$z=g^{-1}z'g$.
\end{proof}

The conjugation by~$g'$ induces $\R$-algebraic isomorphisms
\[
Z\to Z':=Z_G(M_{x'}^{der})\quad
L\to L':=M_{x'}^{der}\cdot Z'^0\quad
F\to F':=L'/M_{x'}^{der},
\]
and induces a map
\[
y^{ab}\mapsto x'^{ab}
\]
and an isomorphism
\[
S=y^{ab}(\C^{\times})\to S':={x'}^{ab}(\C^{\times})
\]
and, with~$T'\leq F'$ being the torus corresponding to~$T$, an isomorphism
\[
W_T
\to 
W_{T'}:=N_{F'}(T')/Z_{F'}(T').
\]

For every~$E'\subseteq W_T$, let~$A_{E'}$ be the $\R$-torus generated by~$\bigcup_{w\in E'}w(S)$ and let~$M_{E'}$
be the inverse image of~$A_{E'}$ by the map~$L\to F$.
Then~$M_{E'}$ is a reductive group such that~$M^{der}_{E'}=\ker(L\to F)=M^{der}_y$.

There exists~$E'\subseteq W_T$ such that
\begin{equation}\label{eq..}
M_{x'}=g'M_{E'}{g'}^{-1}.
\end{equation}

Let~$x''\in\Sigma_\rho$ and~$g''\in G(\R)^+$ be such that that
\[
x''=g''\cdot y\text{ and }\rho_{x''}=g''\cdot \rho\cdot {g''}^{-1},
\]
and such that
\begin{equation}\label{eq...}
M_{x''}=g''M_{E'}{g''}^{-1}.
\end{equation}

We have, with~$\gamma=g''\cdot {g'}^{-1}$,
\[
M_{x''}=\gamma\cdot M_{x'}\cdot \gamma^{-1}\text{ and }x''=\gamma\cdot x'.
\]
Let~$W=G/(N_G(M_{x'}\cap Z_G(M_{x'}^{der}))$ be as in~\eqref{eq:W hybride}. We claim that
\[
\gamma\cdot (N_G(M_{x'})\cap Z_G(M_{x'}^{der}))\in W(\Q)^+.
\]
In terms of Definition~\ref{def: orbite geo hybride}, this formula is equivalent to~$x''\in\Sigma^{g}(x')$.
\begin{proof}From~$N_G(M_{x'})\cap Z_G(M_{x'}^{der})=N_G(Z(M_{x'}))\cap Z_G(M_{x'}^{der})$, we obtain an embedding
\[
W\to G/N_G(M_{x'})\times G/Z_G(M_{x'}^{der}).
\]
Note that~$\gamma\in G(\R)^+$.
It is enough to prove
\begin{equation}\label{eqratio1}
\gamma\cdot N_G(Z(M_{x'}))\in (G/N_G(Z(M_{x'})))(\Q)
\end{equation}
and
\begin{equation}\label{eqratio2}
\gamma\cdot Z_G(M_{x'}^{der})\in (G/Z_G(M_{x'}^{der}))(\Q).
\end{equation}
We have an isomorphism
\[
(G/Z_G(M_{x'}^{der}))\to G\cdot \rho_{x'}.
\]
As~$\rho_{x''}$ is defined over~$\Q$, we have~\eqref{eqratio2}.

Let~$G\cdot [Z(M_{x'})]$ denote the conjugacy class of the torus~$Z(M_{x'})$. We have an isomorphism
\[
(G/N_G(Z(M_{x'})))\simeq G\cdot [Z(M_{x'})].
\]
As~$\gamma\cdot Z(M_{x'})\gamma^{-1}=Z(M_{x''})$ and~$Z(M_{x''})$ is defined over~$\Q$, we have~\eqref{eqratio1}.
\end{proof}

For each~$E'\subseteq W_T$, all the~$x''\in \Sigma_\rho$ as in~\eqref{eq...} belong to the same hybrid geometric orbit. From the discussion preceding~\eqref{eq..}, for each~$x''\in\Sigma_\rho$, there is at least one~$E'\subseteq W_T$ which qualifies.

As there are only finitely many subsets~$E'\subseteq W_T$, we deduce that~$\Sigma_\rho$ is contained in a finite union of hybrid geometric orbits.

Let~$W_{x''}=N_G(Z(M_{x'}))\cap Z_G(M_{x'}^{der})$ and let~$\pi_{x''}:W_{x''}(\Q)\cap W_{x''}^+\to \Sigma^g(x'')\subseteq X$ be as in Definition~\ref{def: orbite geo hybride}. Conjugation by~$\gamma$ induces isomorphisms defined over~$\Q$
\[
G/N_G(M_{x'})\times G/Z_G(M_{x'}^{der})\to 
G/N_G(M_{x''})\times G/Z_G(M_{x''}^{der}),
\]
and thus~$W(\Q)^+\to W_{x''}(\Q)^+$. Moreover one can check that
\begin{multline}\label{eq:pi''}
g\cdot (N_G(Z(M_{x''}))\cap Z_G(M_{x''}^{der}))=
g\cdot\gamma\cdot (N_G(Z(M_{x''}))\cap Z_G(M_{x''}^{der}))\gamma^{-1}\\
\mapsto \pi\left( g \gamma \cdot (N_G(Z(M_{x'}))\cap Z_G(M_{x'}^{der})) \right)
\end{multline}
is~$G(\R)$ equivariant and maps~$1\cdot(N_G(Z(M_{x''}))\cap Z_G(M_{x''}^{der}))$ to
\begin{multline}\pi\left( 1\cdot \gamma \cdot (N_G(Z(M_{x'}))\cap Z_G(M_{x'}^{der})) \right)=\\
\gamma\cdot \pi\left( 1\cdot \gamma \cdot (N_G(Z(M_{x'}))\cap Z_G(M_{x'}^{der})) \right)=
\gamma \cdot x'=x''.
\end{multline}
The application~\eqref{eq:pi''} is thus~$\pi_{x''}$. We deduce
\[
\Sigma^g(x')=\pi(W(\Q)^+)=\pi_{x''}(W_{x''}(\Q)^+)=\Sigma^g(x'').
\]
As there are only finitely many subsets~$E'\subseteq W_T$,  this concludes the proof of Theorem~\ref{thm:finiteness geometric hybrid}.
\end{proof}

\section{Natural Height functions on Geometric hybrid orbits}\label{sec:height functions}
\emph{One of the main tools in~\cite{APZ1} is the introduction of natural height functions on geometric Hecke orbits. This
was done by using the parametrisation of the geometric Hecke orbit by a set~$W(\Q)^+$ of rational points of a conjugacy class~$W$ of~$G$ and using a height function on~$W$.}

\emph{
For hybrid geometric Hecke orbits, even of special points, the maps~\eqref{map special max param} are in general not injective on~$W(\Q)^+$. Given a point~$x\in\Sigma\cap X^+$ as in~\eqref{map special max param}, one first chooses a map~$p_i$ such as in~\eqref{param maps pi} such that~$x$ has one and only one inverse image on~$W_i(\Q)^+$. One can then choose a height function on~$W_i(\Q)$, and define the height of~$x$ as the height of this inverse image.}

\subsection{Uniqueness of preimages}
\begin{proposition}\label{prop:41} 
Let~$x'$ and~$W=G/(N\cap Z)$ be as in Theorem~\ref{thm:geom in hybrid}. Then
\[
\exists! w\in (W(\Q)\cap W^+), \pi(w)=x'.
\]
\end{proposition}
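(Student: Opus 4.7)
The plan is to treat existence as immediate --- the neutral coset $1\cdot(N\cap Z)\in W^+$ is visibly a $\Q$-point of $W$ and, by construction of $\pi$, is sent to $x'$ --- and concentrate on uniqueness.

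Suppose $g\in G(\R)^+$ satisfies both $g(N\cap Z)\in W(\Q)$ and $\pi(g(N\cap Z))=x'$. By $G(\R)^+$-equivariance of $\pi$, the latter reads $g\cdot x'=x'$, so $g\in K^+:=\mathrm{Stab}_{G(\R)^+}(x')$. The aim is to show $g\in(N\cap Z)(\R)\cap G(\R)^+$. As in the proof of Theorem~\ref{thm:geom in hybrid} we may assume $G=G^{ad}$. Functoriality of the Mumford--Tate group yields $gM_{x'}g^{-1}=M_{x'}$, so $g$ normalises $M_{x'}$ and hence also $Z(M_{x'})$ and $M^{der}_{x'}$; in particular $g\in N(\R)$ automatically, and only the inclusion $g\in Z(\R)$ remains to be proved.

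The condition $g(N\cap Z)\in W(\Q)$, combined with the characterisation of $(G/Z)(\Q)$ recalled in the proof of Theorem~\ref{thm:geom in hybrid}, means that the automorphism $\alpha:=\mathrm{Int}(g)|_{M^{der}_{x'}}$ of $M^{der}_{x'}$ is $\Q$-rational. Since $M_{x'}^{ad}=M^{der}_{x'}/Z(M^{der}_{x'})$, $\alpha$ descends to a $\Q$-automorphism $\bar\alpha$ of $M_{x'}^{ad}$. The relation $g\cdot x'=x'$ gives $\mathrm{Int}(g)\circ x'=x'$ and hence $\bar\alpha\circ x'^{ad}=x'^{ad}$, so the $\Q$-subgroup $H:=\{m\in M_{x'}^{ad}\mid\bar\alpha(m)=m\}$ satisfies $x'^{ad}(\SSS_\R)\subseteq H_\R$. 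Since $M_{x'}^{ad}$ is the Mumford--Tate of $x'^{ad}$ in $G^{ad}$, its defining minimality forces $H=M_{x'}^{ad}$, i.e.\ $\bar\alpha=\mathrm{id}$.

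To conclude, one lifts this triviality across the central isogeny $M^{der}_{x'}\to M_{x'}^{ad}$: writing $\alpha(m)=z(m)\cdot m$ with $z(m)\in Z(M^{der}_{x'})$, a direct check shows $z$ is a $\Q$-group homomorphism from the connected semisimple group $M^{der}_{x'}$ to the finite group $Z(M^{der}_{x'})$, hence trivial. Therefore $\alpha=\mathrm{id}$ and $g\in Z_G(M^{der}_{x'})(\R)=Z(\R)$, concluding the proof. The main subtlety is the Mumford--Tate minimality step: one must use that the adjoint of the Mumford--Tate group of $x'$ is the Mumford--Tate group of $x'^{ad}$ in $G^{ad}$, and apply this to the $\Q$-subgroup of fixed points of $\bar\alpha$.
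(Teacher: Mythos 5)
There is a genuine gap at the very first step of your uniqueness argument: the claim that $g\cdot x'=x'$ yields $gM_{x'}g^{-1}=M_{x'}$ ``automatically, by functoriality of the Mumford--Tate group''. The functoriality $M_{g\cdot x}=gM_xg^{-1}$ is valid for $g\in G(\Q)$, but here $g$ is only an element of $G(\R)^+$ fixing $x'$; then $gM_{x'}g^{-1}$ is merely an $\R$-subgroup of $G_\R$ containing $x'(\SSS)$, and Mumford--Tate minimality (which is minimality among $\Q$-subgroups) gives no comparison with $M_{x'}$. Indeed the stabiliser of $x'$ in $G(\R)^+$ is in general much larger than $N_{G(\R)}(M_{x'})$: for a CM point of $\mathcal{A}_2$ the stabiliser is (up to centre) a group isomorphic to $U(2)$, of dimension $4$, while $N_{G(\R)}(T_\R)$ for the $3$-dimensional CM torus $T=M_{x'}$ has dimension $3$, so most elements fixing $x'$ do not normalise $M_{x'}$. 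The gap propagates: without knowing $gM^{der}_{x'}g^{-1}=M^{der}_{x'}$, your $\alpha:=\mathrm{Int}(g)|_{M^{der}_{x'}}$ is not an automorphism \emph{of} $M^{der}_{x'}$ (the characterisation of $(G/Z)(\Q)$ only says that $gM^{der}_{x'}g^{-1}$ is a $\Q$-group and that $\mathrm{Int}(g):M^{der}_{x'}\to gM^{der}_{x'}g^{-1}$ is defined over $\Q$), so the descent $\bar\alpha$ and the fixed-point group $H$ are not yet defined, and the conclusion $g\in N(\R)$ needed for $g\in N\cap Z$ is likewise unproved.

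The repair is exactly the portion of the paper's proof you skipped: use the arithmetic hypothesis $g(N\cap Z)\in W(\Q)$, not just $g\cdot x'=x'$. Projecting to the two factors (or to $G/N_G(M_{x'})$, noting $N\cap Z\leq N_G(M_{x'})$) shows that $gZ(M_{x'})g^{-1}$ and $gM^{der}_{x'}g^{-1}$, hence $gM_{x'}g^{-1}$, are defined over $\Q$; since this $\Q$-group contains $x'(\SSS)$ (because $g$ centralises $x'(\SSS)$), Mumford--Tate minimality gives $M_{x'}\leq gM_{x'}g^{-1}$ and therefore equality, so $g$ does normalise $M_{x'}$, $Z(M_{x'})$ and $M^{der}_{x'}$. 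From that point on your argument coincides with the paper's: both pass to the $\Q$-rational automorphism of $M^{ad}_{x'}$, use that $M^{ad}_{x'}$ is the Mumford--Tate group of $x'^{ad}$ to force the fixed subgroup to be everything, and conclude $g\in Z_G(M^{der}_{x'})$; your explicit lifting across the central isogeny (the homomorphism $z$ into the finite centre being trivial on a connected group) is a welcome justification of the paper's brief ``consequently $\phi^{der}$ is trivial''.
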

\begin{proof}The existence follows from the fact that~$\pi(w)=x'$ if~$w$ is the neutral coset~$w=N\cap Z\in W=G/(N\cap Z)$.

We need to prove the unicity. Let~$w\in W(\Q)\cap W^+$ be such that~$\pi(w)=x'$. As~$w\in W^+$, there exists~$g\in G(\R)^+$ such that~$w= g\cdot(N\cap Z)$. As~$\pi(w)=x'$, we have~$g\cdot x'=x'$. From~$g\cdot x'$ and~$g\in G(\R)^+$ we deduce
\[
g\in Z_G(x'(\C^\times)).
\]

As~$w\in W(\Q)$, that is~$g\cdot (N\cap Z)\in (G/(N\cap Z))(\Q)$, we have~
\[
gN_G(M_{x'})\in (G/N_G(M_{x'}))(\Q)
\text{ and~}gZ_G(M^{der}_{x'})\in (G/Z_G(M^{der}_{x'}))(\Q)
.\]

From~$gN_G(M_{x'})\in (G/N_G(M_{x'}))(\Q)$, we deduce that~$gM_{x'}g^{-1}$ is defined over~$\Q$. From~$g\in Z_G(x'(\C^\times))$ we deduce~$x'(\C^\times)=gx'(\C^\times)g^{-1}\leq gM_{x'}g^{-1}$. As~$gM_{x'}g^{-1}$ is defined over~$\Q$, we have~$M_{x'}\leq gM_{x'}g^{-1}$. We deduce~$M_{x'}= gM_{x'}g^{-1}$ and thus~$g\in N_{G(\R)^+}(M_{x'})$.

Consequently the conjugation by~$g$ defines automorphisms~$\phi:M_{x'}\to M_{x'}$, $\phi^{der}:M^{der}_{x'}\to M^{der}_{x'}$ and~$\phi^{ad}:M^{ad}_{x'}\to M^{ad}_{x'}$, defined over~$\R$.

From~$g\cdot Z_G(M^{der}_{x'})\in (G/Z)(\Q)$ we deduce that~$\phi^{der}$ is defined over~$\Q$. Thus~$\phi^{ad}$ is defined over~$\Q$, and the subgroup, say~$H\leq M^{ad}_{x'}$, of elements fixed by~$\phi^{ad}$, is defined over~$\Q$.
Let~$x'^{ad}:\C^{\times}\to M_{x'}(\R)\to M^{ad}_{x'}(\R)$ denote the cocharacter deduced from~$x'$.
From~$g\in Z_G(x'(\C^\times))$ we deduce that~$x'^{ad}(\C^{\times})\leq H(\R)$. As~$M_{x'}$ is the Mumford-Tate group of~$x'$, the Mumford-Tate group of~$x'^{ad}$ is~$M_{x'}^{ad}$. As~$H$ is defined ove~$\Q$ and contains~$x'^{ad}(\C)^{\times}$, we have~$M^{ad}_{x'}\leq H$. That is,~$\phi^{ad}$ is the identity automorphism. Consequently,~$\phi^{der}$ is trivial, that is~$g\in Z_{G}(M^{der}_{x'})$.

We proved~$g\in Z_{G}(M^{der}_{x'})$ and~$g\in N_G(M_{x'})$ we deduce~$g\in N\cap Z$ and~$w=1\cdot (N\cap Z)$.
This concludes the proof.
\end{proof}

\begin{proposition}\label{prop:unicity}
Let~$x'$ and~$W$ be as in Proposition~\ref{prop:41}. Let~$g\in G(\R)^+$ be such that~$g \cdot (N\cap Z)\in W(\Q)\cap  W^+$, and let~$x''=g\cdot x'$.

Assume that~$M_{x''}(\R)=gM_{x'}(\R)g^{-1}$. Then
\[
\exists! w\in (W(\Q)\cap W^+), \pi(w)=x''.
\]
\end{proposition}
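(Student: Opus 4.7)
The plan is to reduce to Proposition~\ref{prop:41} applied at the basepoint $x'$. Set $H:=N\cap Z$. The coset $g\cdot H$ is a preimage of $x''$ under~$\pi$ by $G(\R)^+$-equivariance, so only uniqueness has to be proved. Let $w=h\cdot H\in W(\Q)\cap W^+$ with $\pi(w)=x''$ and set $\gamma:=g^{-1}h\in G(\R)^+$; the task is to show $\gamma\in H(\R)$. As in Proposition~\ref{prop:41}, I may assume $G=G^{\ad}$.

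First, $G(\R)^+$-equivariance of $\pi$ gives $h\cdot x'=x''=g\cdot x'$, hence $\gamma\in Z_{G(\R)^+}(x'(\C^\times))$. Next, $hH\in W(\Q)$ means, by the interpretation of $W(\Q)$ recalled after the proof of Theorem~\ref{thm:geom in hybrid}, that both $hZ(M_{x'})h^{-1}$ and $hM^{\der}_{x'}h^{-1}$ are $\Q$-subgroups of $G$; hence so is their product $hM_{x'}h^{-1}$, which contains $x''(\C^\times)=hx'(\C^\times)h^{-1}$. By minimality of $M_{x''}$ we get $M_{x''}\leq hM_{x'}h^{-1}$. The hypothesis $M_{x''}(\R)=gM_{x'}(\R)g^{-1}$ forces $\dim M_{x''}=\dim M_{x'}=\dim hM_{x'}h^{-1}$; both these $\Q$-groups being connected, we deduce $hM_{x'}h^{-1}=M_{x''}$, while the hypothesis itself yields $gM_{x'}g^{-1}=M_{x''}$ (first as $\R$-groups, hence as $\Q$-groups since both sides are defined over~$\Q$). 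Comparing the two, $\gamma\in N_G(M_{x'})(\R)$, and because $Z(M_{x'})$ is characteristic in $M_{x'}$, also $\gamma\in N(\R)$.

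It remains to show $\gamma\in Z(\R)$. The conditions $gZ,hZ\in(G/Z)(\Q)$ translate exactly into saying that $AD_g$ and $AD_h$ are $\Q$-isomorphisms $M^{\der}_{x'}\to M^{\der}_{x''}$ (using $hM^{\der}_{x'}h^{-1}=gM^{\der}_{x'}g^{-1}=M^{\der}_{x''}$ from the previous step). Hence their composition $AD_\gamma=AD_g^{-1}\circ AD_h$ is a $\Q$-automorphism $\phi^{\der}$ of $M^{\der}_{x'}$ and induces a $\Q$-automorphism $\phi^{\ad}$ of $M^{\ad}_{x'}$. I now rerun the end of the proof of Proposition~\ref{prop:41}: the fixed subgroup $F\leq M^{\ad}_{x'}$ of $\phi^{\ad}$ is defined over~$\Q$ and contains $x'^{\ad}(\C^\times)$ (since $\gamma$ centralises $x'(\C^\times)$), therefore it contains the Mumford--Tate group $M^{\ad}_{x'}$ of $x'^{\ad}$, whence $F=M^{\ad}_{x'}$ and $\phi^{\ad}=\mathrm{Id}$. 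Since $M^{\der}_{x'}$ is perfect with abelian centre, $\Aut(M^{\der}_{x'})\to\Aut(M^{\ad}_{x'})$ is injective, so $\phi^{\der}=\mathrm{Id}$, i.e.\ $\gamma\in Z_G(M^{\der}_{x'})(\R)=Z(\R)$. Together with $\gamma\in N(\R)$ this gives $\gamma\in H(\R)$, so $w=h\cdot H=g\gamma\cdot H=g\cdot H$.

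The hypothesis $M_{x''}(\R)=gM_{x'}(\R)g^{-1}$ is used precisely to identify $hM_{x'}h^{-1}$ and $gM_{x'}g^{-1}$ with $M_{x''}$ as $\Q$-groups, which is what promotes $AD_g$ and $AD_h$ to $\Q$-isomorphisms with the same target and allows the formation of the $\Q$-automorphism $AD_\gamma$. I expect this dimension-and-connectedness identification in the second paragraph to be the main obstacle: without it, $\gamma$ would only normalise $M_{x'}$ over $\R$ and $AD_\gamma$ would not a priori be $\Q$-defined on $M^{\der}_{x'}$, blocking the Mumford--Tate argument borrowed from Proposition~\ref{prop:41}.
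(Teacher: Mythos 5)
Your argument is correct, but it takes a different route from the paper's. The paper proves uniqueness by transport of structure: it forms $W''=G/(N''\cap Z'')$ attached to $x''$ (here the hypothesis $M_{x''}(\R)=gM_{x'}(\R)g^{-1}$ is what guarantees $N''\cap Z''=g(N\cap Z)g^{-1}$, i.e. that $W''$ is really the variety of Theorem~\ref{thm:geom in hybrid} for $x''$), constructs the $G(\R)$-equivariant isomorphism $\phi:W\to W''$, $h(N\cap Z)\mapsto hg^{-1}(N''\cap Z'')$, shows it is defined over $\Q$ by Lemma~\ref{lem:tout con}, identifies $\pi''\circ\phi=\pi$ by uniqueness of equivariant maps, and then quotes Proposition~\ref{prop:41} at the point $x''$ as a black box. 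You instead inline the argument: you use the rational-point interpretation of $W(\Q)$ together with the hypothesis to get $hM_{x'}h^{-1}=gM_{x'}g^{-1}=M_{x''}$ (the dimension-plus-connectedness step, and the Zariski density of real points in the connected groups involved, are the only places where a careful reader must fill in standard facts), and then rerun the Mumford--Tate fixed-subgroup trick from the proof of Proposition~\ref{prop:41} for $\gamma=g^{-1}h$, concluding $\gamma\in(N\cap Z)(\R)$. What each buys: the paper's reduction is shorter on rationality bookkeeping and makes transparent exactly where the hypothesis on $M_{x''}$ enters, while yours avoids Lemma~\ref{lem:tout con} and the uniqueness-of-equivariant-maps step at the cost of repeating the Proposition~\ref{prop:41} mechanics; your identification of $AD_\gamma$ as a $\Q$-automorphism of $M^{\der}_{x'}$ and the injectivity of $\Aut(M^{\der}_{x'})\to\Aut(M^{\ad}_{x'})$ are sound (any algebraic homomorphism from the perfect group $M^{\der}_{x'}$ to its centre is trivial). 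One cosmetic point: the reduction ``I may assume $G=G^{\ad}$'' is neither justified nor needed; nothing in your argument uses it, so simply drop it.
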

\begin{proof}Taking~$w=g\cdot(N\cap Z)$ proves the existence.

Let~$N''=N_G(M_{x''})=gN_G(M_{x''})g^{-1}$, and~$Z''=Z_{G}(M_{x''})=gZ_{G}(M_{x'})g^{-1}$ and~$W''=G/(N''\cap Z'')$. Let~$\pi'':W''^+:=G(\R)^+/(N''\cap Z'')\to X$ be the unique~$G(\R)^+$ equivariant map such that~$\pi'':1\cdot N''\cap Z''\mapsto x''$.

Then~$h\cdot (N\cap Z)\mapsto h\cdot(N\cap Z)\cdot g^{-1}= hg^{-1}\cdot(N''\cap Z'')$ defines a~$\R$-algebraic $G(\R)$-equivariant isomorphism
\[
\phi:W\to W''
\]
We have~$\phi(W^+)=W''^+:=G(\R)^+\cdot g^{-1}\cdot(N''\cap Z'')$.
The map~$\pi''\circ \phi:W^+\to W''^+ \to X$ is~$G(\R)^+$-equivariant and
\[
\pi''\circ\phi(1\cdot (N\cap Z))=\pi''(g^{-1}\cdot (N''\cap Z''))=g^{-1}\cdot\pi''(1\cdot (N''\cap Z''))=g^{-1}\cdot x''=x'.\]
As~$\pi$ is the unique such map (Theorem~\ref{thm:geom in hybrid}), we have~$\pi''\circ \phi=\pi$.

By Lemma~\ref{lem:tout con}, the isomorphism~$\phi$ is defined over~$\Q$.

By Proposition~\ref{prop:41} applied to~$x''$ instead of~$x'$, the unique preimage of~$x''$ in~$W''(\Q)$ under~$\pi''$ is~$1\cdot (N''\cap Z'')$. Applying~$\phi^{-1}$ we deduce that the unique preimage of~$x''$ in~$W(\Q)$ under~$\pi$ is~$g\cdot (N\cap Z)$.
\end{proof}

\begin{lemma}\label{lem:tout con}
 Let~$H\leq G$ be  linear~$\Q$-algebraic groups, let~$g\in G(\C)$ be such that~$g\cdot H \in (G/H)(\Q)$.
Then~$H'=gHg^{-1}$ is a~$\Q$-algebraic subgroup of~$G$ and~$G/H'$ is a~$\Q$-algebraic variety. Moreover the isomorphism of algebraic varieties
\begin{equation}\label{eq:tout con}
fH\mapsto fHg^{-1}=fg^{-1}H':(G/H)(\C)\to (G/H')(\C)
\end{equation}
is defined over~$\Q$.
\end{lemma}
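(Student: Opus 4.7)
The plan is to unpack the hypothesis $g\cdot H\in (G/H)(\Q)$ in terms of the Galois action on $\ol{\Q}$-points and then verify each conclusion by elementary cocycle manipulation. The hypothesis means exactly that the point $gH\in (G/H)(\ol{\Q})$ is fixed by $\Gal(\ol{\Q}/\Q)$, that is, for every $\sigma\in\Gal(\ol{\Q}/\Q)$, the element $c_\sigma:=g^{-1}\sigma(g)$ lies in $H(\ol{\Q})$. This single cocycle relation is the only input needed for the three conclusions.

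First I would show that $H'=gHg^{-1}$ is Galois-stable inside $G$. For any $\sigma$,
\[
\sigma(H')=\sigma(g)\,H\,\sigma(g)^{-1}=g\,c_\sigma\,H\,c_\sigma^{-1}\,g^{-1}=gHg^{-1}=H',
\]
since $c_\sigma\in H$ normalises $H$. A $\Gal(\ol{\Q}/\Q)$-stable closed $\ol{\Q}$-subgroup of $G$ is defined over $\Q$, so $H'$ is a $\Q$-algebraic subgroup of $G$ and the quotient $G/H'$ is a $\Q$-algebraic variety.

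Next, I would verify that $\phi\colon fH\mapsto fg^{-1}H'$ is well defined on cosets: replacing $f$ by $fh$ with $h\in H$ gives $fhg^{-1}H'=fg^{-1}(ghg^{-1})H'=fg^{-1}H'$ because $ghg^{-1}\in H'$. The formula $fH'\mapsto fgH$ is an inverse, so $\phi$ is an isomorphism of $\ol{\Q}$-varieties coinciding with the map of~\eqref{eq:tout con}. It remains to check Galois equivariance on $\ol{\Q}$-points: using that $H'$ is $\Q$-defined, for $f\in G(\ol{\Q})$ and $\sigma\in\Gal(\ol{\Q}/\Q)$ one computes $\sigma(\phi(fH))=\sigma(f)\sigma(g)^{-1}H'$ and $\phi(\sigma(fH))=\sigma(f)g^{-1}H'$; these two cosets coincide iff $g\sigma(g)^{-1}\in H'$, and indeed $g\sigma(g)^{-1}=g(gc_\sigma)^{-1}=gc_\sigma^{-1}g^{-1}\in gHg^{-1}=H'$. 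Hence $\phi$ is defined over $\Q$. There is no genuine obstacle: the lemma reduces to routine bookkeeping with the $H$-valued cocycle $c_\sigma$.
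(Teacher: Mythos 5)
Your argument is correct and essentially the paper's: both prove the map is defined over~$\Q$ by checking equivariance under field automorphisms, the only real difference being that you phrase the hypothesis through the cocycle~$c_\sigma=g^{-1}\sigma(g)\in H$, while the paper works directly with the stability of the cosets~$gH(\C)$ and~$H(\C)g^{-1}$. One point to tighten: the lemma only assumes~$g\in G(\C)$, so~$\sigma(g)$ is not defined for~$\sigma\in\Gal(\ol{\Q}/\Q)$ as written; either first replace~$g$ by an element of~$G(\ol{\Q})$ in the coset~$gH(\C)$ (possible since~$G(\ol{\Q})\to (G/H)(\ol{\Q})$ is surjective, and this changes neither~$H'$ nor the map), or run the same computation with~$\sigma\in\Aut(\C/\Q)$ acting on~$\C$-points, as the paper does, so that~$c_\sigma\in H(\C)$ and the identical manipulations go through.
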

\begin{proof}For~$\sigma\in Aut(\C/\Q)$, we have~$\sigma(gH(\C))=gH(\C)$. It follows that~$\sigma(H(\C)g^{-1})=H(\C)g^{-1}$. Hence~\eqref{eq:tout con} maps~$\sigma(fH(\C))=\sigma(f)H(\C)$ to
\(
\sigma(f)H(\C)g^{-1}=\sigma(f)\sigma(H(\C)g^{-1})=\sigma(f H(\C)g^{-1}).
\)
As~\eqref{eq:tout con} commutes with the action of~$Aut(\C/\Q)$, it is defined over~$\Q$.
\end{proof}

\subsection{Natural heights functions}

\begin{proposition}\label{prop:factor W}
Let~$x'$ and~$W$ be as in Theorem~\ref{thm:geom in hybrid}. 

The varieties~$G/N$ and~$G/Z$ and~$\widetilde{W}:=G/Z_G(M_x')$ are affine.

We have~$Z_G(M_{x'})\leq N\cap Z$, we have~$(N\cap Z)^0=Z_G(M_{x'})^0$, and 
the map
\(
\widetilde{W}\to W
\)
is finite and étale.

The following map is a closed embedding:
\[
W\to G/N\times G/Z.
\]
\end{proposition}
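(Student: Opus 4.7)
The plan is to establish the four assertions in sequence, using Matsushima's criterion---for $G$ reductive, $G/H$ is affine iff $H$ is reductive---as the main recurring tool. First I would reduce to the case $G = G^{\ad}$ as in the proof of Theorem~\ref{thm:geom in hybrid}, so that $G$ is semisimple.

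For affineness of $G/N$, $G/Z$, and $\widetilde W = G/Z_G(M_{x'})$, I would verify that each of $N$, $Z$, and $Z_G(M_{x'})$ is reductive. Both $Z_G(M_{x'})$ and $Z = Z_G(M^{\der}_{x'})$ are centralizers of reductive subgroups of a reductive group, hence are reductive. For $N = N_G(Z(M_{x'}))$, the quotient $N/Z_G(Z(M_{x'}))$ embeds in the finite automorphism group of the character lattice of the torus $Z(M_{x'})$; thus $N^0 = Z_G(Z(M_{x'}))^0$ is reductive, so $N$ is reductive.

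The inclusion $Z_G(M_{x'}) \leq N \cap Z$ is immediate from definitions, since centralizing $M_{x'}$ implies centralizing both $M^{\der}_{x'}$ and $Z(M_{x'})$. For the equality $(N \cap Z)^0 = Z_G(M_{x'})^0$, I would invoke equation~\eqref{eq:NZvsZ} in the proof of Theorem~\ref{thm:geom in hybrid}, which yields $(N \cap Z)^0 \leq Z_G(M_{x'})$; combining with the previous inclusion gives equality of identity components. The map $\widetilde W \to W$ is then the quotient of $\widetilde W$ by the free right action of the finite group $(N \cap Z)/Z_G(M_{x'})$, hence is a finite étale cover.

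It remains to show that $W \to G/N \times G/Z$ is a closed embedding. Injectivity on points is clear, since $N \cap Z$ is the intersection of the two stabilizers. The image is the diagonal $G$-orbit of $(1 \cdot N, 1 \cdot Z)$, with stabilizer exactly $N \cap Z$. Since $G/N \times G/Z$ is affine by the first step, and $N \cap Z$ is reductive (its identity component $Z_G(M_{x'})^0$ is reductive, and the component group is finite), Matsushima's theorem in orbit form implies this orbit is closed. The induced $G$-equivariant bijection from $W$ onto this closed orbit is then an isomorphism of smooth $G$-homogeneous spaces with the common stabilizer $N \cap Z$. I expect the main technical point to be this last identification: establishing scheme-theoretically that a $G$-equivariant bijective morphism between smooth homogeneous varieties with identified stabilizers is an isomorphism, which follows because the map is étale of degree one---the quotient description $G \to G/(N\cap Z)$ factors the corresponding smooth surjection $G \to \operatorname{orbit}$ with the same kernel.
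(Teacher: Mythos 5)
The first three assertions (affineness via reductivity of $Z_G(M_{x'})$, $Z$ and $N$; the inclusion $Z_G(M_{x'})\leq N\cap Z$; the identity $(N\cap Z)^0=Z_G(M_{x'})^0$ via~\eqref{eq:NZvsZ}; and the finite étale quotient $\widetilde W\to W$) are handled exactly as in the paper, and that part of your argument is fine. The problem is the last step. The principle you invoke --- ``Matsushima's theorem in orbit form: the ambient variety is affine and the stabiliser $N\cap Z$ is reductive, hence the orbit is closed'' --- is not a theorem, and it is false in general. Matsushima's criterion only gives the implication in the other direction: a closed (more generally, affine) orbit in an affine $G$-variety has reductive stabiliser. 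Reductivity of the stabiliser does \emph{not} force the orbit to be closed: take $G=\GG_m$ acting on $\A^1$ by scaling; the orbit of $1$ has trivial (hence reductive) stabiliser and is the affine variety $\GG_m$, yet it is not closed in $\A^1$. So your argument proves only that the orbit of $(1\cdot N,1\cdot Z)$ is an affine locally closed subvariety isomorphic to $W$, which is not enough.

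To close the gap you need a genuine closedness theorem. The paper's route is: using finiteness of $N/Z_G(Z(M_{x'})^0)$ and properness of $\widetilde W\to W$, reduce to showing that $\widetilde W\to G/Z_G(Z(M_{x'})^0)\times G/Z_G(M^{der}_{x'})$ is a closed embedding; then realise these homogeneous spaces as $G$-orbits of the Lie-algebra embeddings $\phi'$, $\phi''$, $\phi$ viewed as vectors in $V'$, $V''$, $V$ under the adjoint action, and invoke Richardson's theorem~\cite{Richardson} that such orbits (of tuples generating reductive subalgebras) are Zariski closed. Alternatively, one can stay with your formulation: the closedness of the diagonal orbit of $(1\cdot N,1\cdot Z)$ in $G/N\times G/Z$ is equivalent to the closedness of $ZN$ in $G$, and there is a theorem (Luna) asserting that the product of two reductive subgroups of a reductive group is closed; but this is a substantive input of the same nature as Richardson's theorem, not a formal consequence of Matsushima. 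Your final remark, that a bijective $G$-equivariant morphism between homogeneous spaces with the same stabiliser is an isomorphism, is correct in characteristic zero and is the easy part; the missing content is precisely the closedness of the orbit.
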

We remark that~$\widetilde{W}$ in this proposition is the~$W$ which was used in~\cite{APZ1} to parametrise the geometric Hecke orbits, in the context of the André-Pink-Zannier Conjecture. Here,~$W$ parametrises the geometric hybrid orbits.

\begin{proof}
As~$G$ and~$M_{x'}$ are reductive so are~$Z_G(M_{x'})$ and~$N_G(M_{x'})$ and~$M^{der}_{x'}$ and~$Z=Z_G(M^{der}_{x'})$ and~$Z(M_{x'})^0$ and~$N=N_G(Z(M_{x'})^0)$. We deduce that~$G/N$ and~$G/Z$ and~$G/Z_G(M_{x'})$ are affine. The inclusion~$Z_G(M_{x'})\leq N\cap Z$ is immediate and the identity~$(N\cap Z)^0=Z_G(M_{x'})^0$ was proved in~\eqref{eq:NZvsZ}. It follows that~$\widetilde{W}\to W$ is finite and étale.

The map~$W \to G/N\times G/Z$ is obviously injective. Let us show that this is a closed map. As~$Z(M_{x'})^0$ is a torus, the image of~$N_G(Z(M_{x'})^0)\to \Aut(Z(M_{x'})^0)$ is discrete. Thus~$N_G(Z(M_{x'})^0)/Z_G(Z(M_{x'})^0)$ is finite and~$G/Z_G(M_{x'})\to G/(N\cap Z)$ is finite, and proper.

It is enough to prove that the map
\[
\widetilde{W}\to G/Z_G(Z(M_{x'})^0)\times G/Z_G(M^{der}_{x'})
\]
is a closed embedding. Let us denote by~$\phi':\mathfrak{z}(M_{x'})\to \mathfrak{g}$ and~$\phi':\mathfrak{m}^{der}_{x'}\to\mathfrak{g}$ and~$\phi=\phi'\oplus\phi'':\mathfrak{m}_{x'}\to \mathfrak{g}$ denote the embedding of Lie algebras, viewed as vectors~$\phi'\in V':=\mathfrak{z}(M_{x'})^\vee\tens\mathfrak{g}$, and~$\phi''\in V'':={\mathfrak{m}^{der}_{x'}}^\vee\tens\mathfrak{g}$ and~$\phi=\phi'\oplus\phi''\in V:= \mathfrak{m}_{x'}^{\vee}\tens \mathfrak{g}\simeq V'\oplus V''$. 
We view the vector spaces~$V$,~$V'$ and~$V''$ as representations through the adjoint representation of~$G$ on~$\mathfrak{g}$, and as affine algebraic varieties.

We have embeddings
\begin{align}
G/Z_G(Z(M_{x'})^0)\simeq G\cdot \phi'\subseteq V'\\
G/Z_G(M^{der}_{x'})\simeq G\cdot \phi''\subseteq V''\\
G/Z_G(M_{x'})\simeq G\cdot \phi\subseteq V.
\end{align}
By~\cite{Richardson} the orbits are closed. In particular
\[
G/Z_G(M_{x'})\subseteq G/Z_G(Z(M_{x'})^0)\times G/G/Z_G(M^{der}_{x'})\subseteq V'\oplus V''\simeq V
\]
which is closed as a subset of~$V$, is closed as a subset of~$G/Z_G(Z(M_{x'}))\times G/Z_G(M^{der}_{x'})$.
\end{proof}

\subsection{Explicit height functions}\label{sec:natural height}

Choosing a basis of~$\mathfrak{g}$ and~$\mathfrak{m}_{x'}$, resp. of~$\mathfrak{z}(M_{x'})$, resp. of~$\mathfrak{m}^{der}_{x'}$, we obtain identifications
\begin{align}
&V\phantom{''}\simeq \AAA^{\dim(G)\cdot \dim(M_{x'})},\\
\text{ resp. }
&V'\phantom{'}\simeq \AAA^{\dim(G)\cdot \dim(Z(M_{x'}))},\\
\text{ resp. }
&V''\simeq \AAA^{\dim(G)\cdot \dim(M^{der}_{x'})}.
\end{align}
We deduce global and local affine Weil height functions on~$\tilde{W}$, on~$G/Z_{G}(Z(M_{x'})^0)$ and~$G/Z$ respectively. By general functoriality of heights (Theorem~\ref{thm:func heights}), the height functions descend from~$G/Z_{G}(Z(M_{x'})^0)$ to~$G/N$ and from~$\tilde{W}$ to~$W$.

\subsection{Natural height functions}\label{sec:natural} We construct a natural height function on a hybrid orbit
\[
H_f:\Sigma_X(M,X_M,x)\to \Z_{\geq1}.
\]
For every~$x'\in \Sigma_X(M,X_M,x)$ we consider the geometric Hecke orbit~$\Sigma^{g}(x')\subseteq \Sigma_X(M,X_M,x)$ and the parametrisation
\[
\pi_{x'}:W_{x'}(\Q)^+\to \Sigma^g(x')
\]
where~$W_{x'}=G/(N_G(Z(M_{x'}))\cap Z_G(M^{der}_{x'})$. We choose~$y_1,\ldots,y_k$ as in~\eqref{eq:yis}.
We choose non-archimedean Weil height functions
\begin{equation}\label{eq:choices height}
H_{i,f}:W_{y_i}(\Q)\to \Z_{\geq1}.
\end{equation}
(Each~$H_{i,f}$ is unique up to polynomial equivalence by Theorem~\ref{thm:func heights}.) For every~$x'\in \Sigma_X(M,X_M,x)$ there is a unique~$y\in\{y_1;\ldots;y_k\}$ such that~$\Sigma^g(x')=\Sigma^g(y_i)$. By Proposition~\ref{prop:unicity},
there is a unique~$w(x')\in W_{y_i}(\Q)^+$ whose image by the parametrisation map~$\pi_{y_i}$ is~$x'$. 
\begin{definition} The natural height of~$x'$ is
\begin{equation}\label{eq:nat Hf}
H_f(x'):=H_{i,f}(w(x')).
\end{equation}
\end{definition}
Up to polynomial equivalence the  function~\eqref{eq:nat Hf} does depend on the choices made in~\eqref{eq:choices height}.

For every~$y_i$, we have maps
\[
w\to w^{cent}:W_{y_i}\to G/N_G(Z(M_{y_i}))
\qquad w\to w^{der}:W_{y_i}\to G/ Z_G(M^{der}_{y_i})
\]
We choose non-archimedean Weil height functions
\[
H_f:G/N_G(Z(M_{yi}))\to \Z_{\geq1} \qquad H_f:G/ Z_G(M^{der}_{y_i})\to \Z_{\geq1}
\]
we define
\begin{equation}\label{eq:defi Hfder Hfcent}
H^{cent}_f(x'):=H_f(w^{cent}(x'))\qquad
H^{der}_f(x'):=H_f(w^{der}(x')).
\end{equation}
It follows from Proposition~\ref{prop:factor W} that
\begin{equation}\label{eq:Hf vs cent et der}
H_f(x')\approx H_f(w^{cent}(x'))\cdot H_f(w^{der}(x')).
\end{equation}

\section{Height and discriminant of a linear torus}\label{sec:height disc}

We fix a reductive group~$G$ over~$\Q$ and an open compact subgroup~$K\leq G(\A_f)$.

For a torus~$T\leq G$ we consider:
\begin{itemize}
\item the splitting field of~$T$
\[
L=\left\{z\in \ol{\Q}\middle| \forall \sigma\in Gal(\ol{\Q}/\Q),\left( \sigma(z)=z\Leftrightarrow \forall y\in \Hom(GL(1)_{\ol{\Q}},T_{\ol{\Q}})), \sigma(y)=y\right)\right\},
\]
\item the absolute discriminant~$d_L$ of~$L$,
\item the unique maximal compact subgroup~$T_{max}\leq T(\A_f)$, 
\item the compact open subgroup~$T(\A_f)\cap K\leq T_{max}$.
\end{itemize}

In~\cite[cf.\,Def.\,1]{BiSYa} an important quantity is the ``discriminant"
\[
disc_K(T):=d_L\cdot [T_{max}:T(\A_f)\cap K].
\]
The main result of this section is the following. We use the notation~$\approx$ from~\cite[Def.\,1.7]{APZ1} to denote the polynomial equivalence of numerical functions.
\begin{theorem}[Height-equals-Discriminant Theorem]
\label{thm:AOvsHeight}
Let~$N_G(T)$ be the normaliser of~$T$, and let~$W\simeq G/N_G(T)$ the conjugacy class of~$T$. We denote by~$T_w\leq G_{\A_f}$ the torus corresponding to~$w\in W(\A_f)$. Let~$\iota:W\hookrightarrow \A^m$
be a closed affine $\Q$-algebraic embedding. 

Then, as functions~$W(\A_f)\to \Z_{\geq1}$,
\begin{equation}\label{Height equiv disc thm}
disc_K(T_w)\approx H_f(\iota(w)):=\max\{n\in\Z_{\geq1}| n\cdot \iota(w)\in \widehat{\Z}^m\}.
\end{equation}
\end{theorem}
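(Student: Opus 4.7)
My plan is to reduce to a local statement at each prime and then combine via multiplicative decomposition.

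First, both $H_f\circ\iota$ and $disc_K$ factor as products of local contributions. Writing $w = (w_p)_p$, one has $H_f(\iota(w)) = \prod_p H_{f,p}(w_p)$ with $H_{f,p}(w_p) = p^{\max(0,\,-\min_i v_p(\iota(w_p)_i))}$, and, on interpreting the discriminant $d_L$ via the local splitting fields $L_p/\Q_p$ of the $\Q_p$-tori $T_{w_p}$, the decomposition $disc_K(T_w) = \prod_p \delta_p(w_p)$ with $\delta_p(w_p):=p^{v_p(\Disc(L_p/\Q_p))}\cdot[T_{w_p,\max}:T_{w_p}(\Q_p)\cap K_p]$. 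Since $\approx$ is compatible with such products, it suffices to prove a local polynomial equivalence $H_{f,p}\approx\delta_p$ on $W(\Q_p)$, with exponents and multiplicative constants uniform in $p$.

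Second, by the functoriality of Weil heights (Theorem~\ref{thm:func heights}), the choice of $\iota$ is immaterial up to $\approx$, so I would pick $\iota$ extending to a closed immersion $W_{\Z[1/N]}\hookrightarrow \A^m_{\Z[1/N]}$ for a suitable $N$. Let $S$ be the finite set of primes dividing $N$, together with those where $T$ has bad reduction or $K_p\neq G(\Z_p)$. For $p\notin S$ and $w_p\in G(\Z_p)\cdot w_0$, both sides are trivial: $T_{w_p}$ extends to an unramified torus scheme over $\Z_p$, the extension $L_p/\Q_p$ is unramified, $T_{w_p,\max}=T_{w_p}(\Z_p)=T_{w_p}(\Q_p)\cap K_p$, and $\iota(w_p)\in\Z_p^m$. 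For $p\in S$, the local factors are bounded in $w_p$ by constants depending only on $S$, contributing a bounded multiplicative error absorbed by $\approx$.

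Third, at a good prime $p\notin S$ with $w_p\notin G(\Z_p)\cdot w_0$, both $H_{f,p}$ and $\delta_p$ are $K_p$-invariant, and Bruhat-Tits theory indexes the $K_p$-orbits on $W(\Q_p)$ by a combinatorial parameter: a dominant coweight $\lambda\in X_\ast(T)^+$ modulo the Weyl group. Both sides of the target equivalence depend only on $\lambda$. The height $v_p(H_{f,p}(w_p))$ equals the evaluation on $\lambda$ of a piecewise linear functional determined by the weights of $\iota|_T$; similarly, $v_p(\Disc(L_p/\Q_p))$ and $v_p[T_{w_p,\max}:T_{w_p}(\Q_p)\cap K_p]$ are piecewise linear, integer-valued functionals in $\lambda$, determined by the root datum of $G$ and the Galois action on $X^\ast(T_{\ol{\Q_p}})$. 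These three functionals have positive rational coefficients on the open dominant cone, vanish exactly at $\lambda=0$, and are therefore mutually polynomially equivalent with $p$-independent constants.

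The main obstacle is establishing the uniformity in $p$ of this local comparison: the polynomial bounds between the three functionals of $\lambda$ must have constants independent of $p$ for the prime-by-prime equivalences to multiply into the global $\approx$. This uniformity holds because, at $p\notin S$, the root datum of $G_{\Z_p}$, the weights of $\iota$, and the Galois action on cocharacters all reduce to $p$-independent combinatorial data from the fixed arithmetic input $(G,T,\iota)$. Combined with the bounded contributions at primes of $S$, this yields the desired global polynomial equivalence.
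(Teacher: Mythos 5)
Your reduction collapses at the third step, and the collapse is structural rather than cosmetic. The set $W(\Q_p)=(G/N_G(T))(\Q_p)$ is not a single $G(\Q_p)$-orbit, let alone one whose $K_p$-orbits are indexed by a dominant coweight modulo the Weyl group: that Cartan-type parametrisation describes $K_p\backslash G(\Q_p)/K_p$, whereas $W(\Q_p)$ breaks into many $G(\Q_p)$-orbits classified by Galois cohomology of $N_G(T)$, i.e.\ by the various $\Q_p$-forms of the torus inside the geometric conjugacy class. The splitting field $L_p$ of $T_{w_p}$, hence $v_p(\Disc(L_p/\Q_p))$, is constant on each $G(\Q_p)$-orbit and changes only when the form changes; it is therefore not a positively-weighted piecewise-linear functional of any coweight measuring the distance of $w_p$ from the base orbit, and ramified forms occur at primes you have declared ``good''. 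So the very comparison the theorem is about --- that the height detects the ramification of the splitting field, and not only the index $[T_{w_p,\max}:T_{w_p}(\Q_p)\cap K_p]$ --- is exactly the point your combinatorial functional cannot see, and no mechanism is offered for it. Two further claims are false as stated: the local factors at the finitely many bad primes are not bounded functions of $w_p$ (both $H_{f,p}$ and the index are unbounded on $W(\Q_p)$ even for fixed $p$, so they cannot be absorbed as a multiplicative constant and still require a local equivalence), and the identification of the $p$-part of $d_L$ with $\Disc(L_p/\Q_p)$ ignores the number of places of $L$ above $p$ (harmless up to bounded exponent, but it should be said).

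For contrast, the paper's proof is global and supplies precisely the arithmetic input your sketch lacks. After reducing to $G=GL(n)$, $K=GL(n,\widehat{\Z})$, it attaches to $T$ the commutative semisimple algebra $E=\Q[T(\Q)]\subseteq\mathfrak{gl}(n,\Q)$ and the order $\Lambda=E\cap\mathfrak{gl}(n,\Z)$, identifies the natural height on the conjugacy class (via the canonical tensor $\eta_T$) with $\Disc(\Lambda)=d_E\cdot[O_E:\Lambda]^2$, and then proves the two-sided comparison $d_L\cdot[T_{\max}:T(\A_f)\cap K]\approx d_E\cdot[O_E:\Lambda]$ by (i) a Tsimerman-type estimate bounding $[O_E:\Lambda]$ polynomially in terms of the index and $d_E$, and (ii) the elementary bound $[T_{\max}:T(\A_f)\cap K]\leq[\widehat{O_E}^\times:(1+\widehat{\Lambda})\cap\widehat{O_E}^\times]\leq[O_E:\Lambda]$, together with $d_E\approx d_L$. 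If you want to salvage a local-to-global strategy, you would need, at every prime, a genuine comparison between the local order $E_p\cap\mathfrak{gl}(n,\Z_p)$, the local unit index, and the local conductor/discriminant of the form $T_{w_p}$, with exponents uniform in $p$; that is a reformulation of the paper's inequalities, not a consequence of root-datum combinatorics.
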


Let~$G\leq GL(n)$ be an embedding defined over~$\Q$ and let~$G(\widehat{\Z}):=G(\A_f)\cap GL(n,\widehat{\Z})$.
Possibly conjugating the embedding by an element of~$GL(n,\Q)$, we may assume~$K\leq  G(\widehat{\Z})$.
Then
\[
\frac{ disc_{G(\widehat{\Z})}(T) }{disc_K(T)}\text{ belongs to~$\Z_{\geq 1}$ and divides }[G(\widehat{\Z}):K].
\]
It will thus be enough to prove Theorem~\ref{thm:AOvsHeight} in the case~$G=GL(n)$ and~$K=GL(n,\widehat{\Z})$.

Let us recall the principle of functoriality of heights.
We use the notation~$\succcurlyeq$ from~\cite[Def.\,1.7]{APZ1} to denote polynomial domination of numerical functions.
\begin{theorem}\label{thm:func heights}
Let~$W$ be an affine variety over~$\Q$, and let~$\iota_1:W\to \A^{m_1}$ and~$\iota_2:W\to \A^{m_2}$ be two morphisms such that~$\iota_1$ is a finite (proper) map. We define
\(
H_1(w):=\max\{n\in\Z_{\geq1}| n\cdot \iota_1(w)\in \widehat{\Z}^{m_1}\}\)
and \(
H_2(w):=\max\{n\in\Z_{\geq1}| n\cdot \iota_2(w)\in \widehat{\Z}^{m_2}\}
\)

Then, as functions on~$W(\ol{\Q}\tens\A_f)$,
\[
H_1
\succcurlyeq
H_2.
\]
\end{theorem}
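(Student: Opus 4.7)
The plan is a standard functoriality-of-heights argument: use finiteness of~$\iota_1$ to realise each $\iota_2$-coordinate as a root of a monic polynomial with coefficients pulled back along~$\iota_1$, then read off local height bounds place-by-place, and conclude via the product formula.

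First, I invoke the finiteness of~$\iota_1 : W \to \A^{m_1}$. The pullback~$\iota_1^{*} : \Q[z_1,\ldots,z_{m_1}] \to \Q[W]$ makes~$\Q[W]$ a finitely generated module over its image, so each coordinate function~$f_j := \iota_2^{*}(y_j) \in \Q[W]$ satisfies a monic integral relation
\[
f_j^{d_j} + \sum_{i=0}^{d_j - 1} \iota_1^{*}(P_{j,i}) \cdot f_j^{i} = 0,\qquad P_{j,i} \in \Q[z_1,\ldots,z_{m_1}].
\]
I then fix an integer~$N \geq 1$ clearing the (finitely many) denominators of all coefficients of all the~$P_{j,i}$, and set~$E := \max_{j,i}\deg P_{j,i}$.

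Next, I carry out a place-by-place local estimate. For~$w \in W(\ol{\Q}\tens\A_f)$ and a place~$\bar v$ of~$\ol{\Q}$ above a finite place~$v$ of~$\Q$, set~$M_v := \max\bigl(1, \max_k |\iota_1(w)_k|_{\bar v}\bigr)$, so that the non-archimedean Weil height~$H_1(w)$ is, after the standard degree-weighted combination over Galois conjugates, the adelic product~$\prod_v M_v$. Because~$NP_{j,i} \in \Z[z_1,\ldots,z_{m_1}]$, the ultrametric inequality gives~$|P_{j,i}(\iota_1(w))|_{\bar v} \leq |N|_{\bar v}^{-1} M_v^{E}$. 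Substituting into the integral equation and splitting the cases $|f_j(w)|_{\bar v} \geq 1$ versus~$< 1$ yields
\[
\max\bigl(1, |f_j(w)|_{\bar v}\bigr) \leq |N|_{\bar v}^{-1} \cdot M_v^{E}.
\]
Taking the product over all finite places and invoking the product formula~$\prod_{v \text{ finite}} |N|_v^{-1} = N$ collapses the bad-prime factors into a fixed global constant, so that
\[
H_2(w) \leq N \cdot H_1(w)^{E},
\]
which is exactly the polynomial domination $H_1 \succcurlyeq H_2$ in the sense of~\cite[Def.\,1.7]{APZ1}.

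The one mild obstacle I anticipate is reconciling the literal definition of~$H_i$ (``maximal~$n$ with~$n \cdot \iota_i(w) \in \widehat{\Z}^{m_i}$'') with the adelic setting~$w \in W(\ol{\Q}\tens\A_f)$: one must reinterpret~$H_i$ as an adelic product of local contributions weighted over places of~$\ol{\Q}$, after which the local estimate above globalises as claimed. This is routine bookkeeping in Diophantine geometry and introduces no new idea beyond classical functoriality of heights.
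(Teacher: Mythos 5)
The paper itself states Theorem~\ref{thm:func heights} only as a recalled ``principle of functoriality of heights'' and gives no proof, so there is no internal argument to compare against; your proposal supplies the standard one, and it is correct. The integrality argument is exactly right: finiteness of~$\iota_1$ makes $\Q[W]$ module-finite, hence integral, over $\iota_1^{*}\Q[z_1,\ldots,z_{m_1}]$, the monic relations plus clearing denominators give the ultrametric bound $\max\bigl(1,|f_j(w)|_{v}\bigr)\le |N|_{v}^{-1}M_v^{E}$ at every finite place, and multiplying over places yields $H_2\le N\cdot H_1^{E}$, which is the asserted domination. Two small points of bookkeeping. First, note that the paper's own definition (with $\max$, which should clearly be $\min$ as in its later uses, and with $\widehat{\Z}^{m_i}$) is only literally meaningful when $\iota_i(w)$ has coordinates in $\A_f^{m_i}$; for genuine $\ol{\Q}\tens\A_f$-points you do not actually need the degree-weighted product you invoke: interpreting $H_i(w)$ as the least $n$ with $n\cdot\iota_i(w)$ integral at every finite place (i.e.\ lying in $(\OOO_{\ol{\Q}}\tens\widehat{\Z})^{m_i}$), your local estimate says precisely that $N\,H_1(w)^{E}\cdot\iota_2(w)$ is everywhere integral, so $H_2(w)\le N\,H_1(w)^{E}$ follows with no normalisation by local degrees at all, and with constants independent of the field of definition. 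Second, the appeal to ``the product formula'' is a misnomer: for a positive integer $N$ one simply has $\prod_{p}|N|_p^{-1}=N$; this is what you use, and it is harmless. With these cosmetic adjustments your argument is a complete and correct proof of the statement, of exactly the kind the paper implicitly relies on.
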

In particular, if~$\iota_2$ is also a finite morphism, we have
\[
H_1\approx H_2.
\]

It follows that~\eqref{Height equiv disc thm} of Theorem~\ref{thm:AOvsHeight} is independent of the chosen affine embedding~$\iota:W\hookrightarrow \A^m$. We construct a natural embedding in~\S\ref{sec:lin torus}.

\subsection{Intrinsic discriminant of a Torus and its height}
\subsubsection{Canonical quadratic tensor}
For a torus over a field~$K$, we have a map
\begin{equation}\label{Y to Lie}
y\mapsto dy\mapsto dy (\partial):Y(T_K):=Hom(GL(1)_K,T)\to Hom(\mathfrak{gl}(1)_K,\mathfrak{t})\simeq \mathfrak{t},
\end{equation}
where we identify\footnote{By, say, choosing the generator~$\partial\in\mathfrak{gl}(1)_\Z$ characterised by~$d\chi(\partial)=1$ for the identity character~$\chi:GL(1)\to \A^1$.}~$\mathfrak{gl}(1)_\Z\simeq \Z$  and deduce a~$K$-linear map
\begin{equation}\label{Y to Lie tens K}
Y(T)\tens K\to \mathfrak{t}.
\end{equation}
Let us assume that~$K$ is of characteristic~$0$, and denote by~$\ol{K}$ an algebraically closed extension of~$K$.
Then~\eqref{Y to Lie tens K} is an injective map, which is bijective if~$T$ is split (for instance if~$K=\ol{K}$). 
We denote the image of~$Y(T_{\ol{K}})$ in~$\mathfrak{t}_{\ol{K}}$ by
\[
\mathfrak{y}\subseteq \mathfrak{t}_{\ol{K}}.
\]
Then~$\mathfrak{y}$ is a~$\Z$-structure of~$\mathfrak{t}_{\ol{K}}$. We deduce an isomorphism
\begin{equation}\label{iso det 2}
\det(\mathfrak{y})^{\tens2}\tens\ol{K}\simeq \det(\mathfrak{t}_{\ol{K}})^{\tens2}.
\end{equation}
Let~$y_1,\ldots,y_d$ be a $\Z$-basis of~$\mathfrak{y}\approx \Z^{\dim(T)}$. It is well defined up to the action of~$GL_\Z(Y(T))\approx GL(d,\Z)$. Then~$y_1\wedge\ldots\wedge y_d\in \det(Y(T_{\ol{K}}))$ is well defined up to~$\det(GL_\Z(Y(T)))\simeq \{+1;-1\}$,
and
\begin{equation}\label{canonical in det Y 2}
(y_1\wedge\ldots\wedge y_d)^{\tens2}\in \det(\mathfrak{y})^{\tens 2}
\end{equation}
does not depend on the choice of the basis~$y_1,\ldots,y_n$. We denote its image by~\eqref{iso det 2} by
\(
\eta_T\in \det(\mathfrak{t}_{\ol{K}})^{\tens2}.
\)
The map~$Y(T_{\ol{K}})\simeq \mathfrak{y}\subseteq \mathfrak{t}\tens_K\ol{K}$ is compatible with the action of~$Gal(\ol{K}/K)$. We deduce that~$\eta_T$ is fixed by the action of~$Gal(\ol{K}/K)$: we have
\[
\eta_T\in \det(\mathfrak{t})^{\tens2}.
\]
This is a canonical tensor on~$\mathfrak{t}$. As~$\dim(\det(\mathfrak{t}))=1$, we have~$\det(\mathfrak{t})^{\tens2}=\operatorname{Sym}^2\det(\mathfrak{t})$, and~$\eta_T$
represents a canonical quadratic form on~$\det(\mathfrak{t})$.

\subsubsection{A particular case}

Let~$E/\Q$ be a finite extension, and let~$T:=Res_{E/\Q}GL(1)$ be the torus such that~$T(\Q)=E^\times$.

The embeddings~$\iota_1,\ldots,\iota_{[E:\Q]}:E\to \ol{\Q}$ give morphisms~$E^\times \to \ol{\Q}^\times$ which are associated to characters~$y_1,\ldots,y_{[E:\Q]}:T_{\ol{\Q}}\to GL(1)_{\ol{\Q}}$.
Morever~$y_1,\ldots,y_{[E:\Q]}$ is a canonical basis of~$Y(T_{\ol{\Q}})$.

The \emph{Trace form} is a~$\Q$-bilinear form~$B_E:E\times E\to\Q$.
It induces a quadratic form~$\det B_E$ on the one dimensional~$\Q$-vector space~$\bigwedge^{[E:\Q]}E$, which correpond to an element, say~$\tau_E$, in~$\left(\bigwedge^{[E:\Q]}E\right)^{\tens2}$.

\begin{proposition} We have~$\tau_E=\eta_T$.
\end{proposition}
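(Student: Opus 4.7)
The plan is to fix an arbitrary $\Q$-basis $e_1,\ldots,e_d$ of $E$ (with $d=[E:\Q]$) and to compute both tensors as explicit scalar multiples of the volume element $(e_1\wedge\ldots\wedge e_d)^{\otimes 2}$ in the one-dimensional space $(\bigwedge^d E)^{\otimes 2}$. Setting $M:=(\iota_j(e_i))_{i,j}\in M_d(\ol{\Q})$, the goal is to show that both $\eta_T$ and $\tau_E$ equal $(\det M)^{-2}\cdot(e_1\wedge\ldots\wedge e_d)^{\otimes 2}$.

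To compute $\eta_T$, I would use the standard identification of $\mathfrak{t}$ with $E$ for a Weil restriction together with the embeddings $\iota_1,\ldots,\iota_d$ to write down the splitting $\ol{\Q}$-algebra isomorphism $\Phi:E\otimes_{\Q}\ol{\Q}\xrightarrow{\sim}\ol{\Q}^d$. This is exactly the Lie-algebra level of the splitting $T_{\ol{\Q}}\simeq GL(1)^d_{\ol{\Q}}$, so under $\Phi$ the canonical cocharacters $y_j$ become the coordinate inclusions $GL(1)\hookrightarrow GL(1)^d$ and each $dy_j(\partial)$ maps to the $j$-th standard basis vector $f_j$ of $\ol{\Q}^d$. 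Since the matrix of $\Phi$ with respect to the bases $(e_i)$ and $(f_j)$ is $M$, the top exterior power yields $dy_1(\partial)\wedge\ldots\wedge dy_d(\partial)=(\det M)^{-1}\cdot\Phi(e_1\wedge\ldots\wedge e_d)$. Squaring and descending through the isomorphism~\eqref{iso det 2} will produce the desired formula for $\eta_T\in\det(\mathfrak{t})^{\otimes 2}$.

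For $\tau_E$, the Gram matrix of $B_E$ in the basis $(e_i)$ is $\bigl(\mathrm{Tr}_{E/\Q}(e_ie_j)\bigr)_{i,j}=\bigl(\sum_k\iota_k(e_i)\iota_k(e_j)\bigr)_{i,j}=MM^T$, of determinant $(\det M)^2$. Under the convention that identifies a nondegenerate quadratic form on a one-dimensional space $L$ with an element of $L^{\otimes 2}$ via the inverse of the self-duality it induces (the only convention making $\tau_E$ basis-independent), this gives $\tau_E=(\det M)^{-2}(e_1\wedge\ldots\wedge e_d)^{\otimes 2}$, matching $\eta_T$.

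I expect no serious obstacle. The one substantive observation is that the same matrix $M$ governs both the change of basis from the $\Q$-rational $(e_i)$ to the cocharacter basis $(f_j)$ diagonalizing $T_{\ol{\Q}}$, and the factorization of the Gram matrix of the trace form as $MM^T$. Verifying basis-independence on each side (both sides transform as $(\det P)^{-2}\cdot(\det P)^{2}$ under a change of $\Q$-basis by $P$) is a useful consistency check, and a degenerate example such as $E=\Q(\sqrt{d})$ confirms that the common value $(4d)^{-1}(1\wedge\sqrt{d})^{\otimes 2}$ is recovered from both computations.
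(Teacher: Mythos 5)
Your proposal is correct and takes essentially the same route as the paper: both proofs rest on the splitting isomorphism $E\tens\ol{\Q}\simeq\ol{\Q}^{[E:\Q]}$ given by the embeddings $\iota_1,\ldots,\iota_{[E:\Q]}$, under which the trace form becomes the standard form and the cocharacters $y_j$ become the standard basis; you render this as the Gram factorization $MM^{T}$ and the explicit common value $(\det M)^{-2}(e_1\wedge\ldots\wedge e_d)^{\tens 2}$, whereas the paper phrases it as orthonormality of $y_1\wedge\ldots\wedge y_{[E:\Q]}$ for $\det(B_E\tens L)$ and then descends from $L$ to $\Q$. Your reading of the implicit convention defining $\tau_E$ (square of an orthonormal generator, i.e.\ the inverse of the self-duality induced by the form) is the one the paper itself uses later in Proposition~\ref{prop:lattice disc vs tenseur}, so the argument matches.
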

\begin{proof}
Let~$L\leq \ol{\Q}$ be a field over which~$T$ is split. Then
\[
(\iota_1\tens L,\ldots,\iota_{[E:\Q]}\tens L):E\tens L\to L\times\ldots\times L
\]
is an isomorphism of~$\Q$-algebras, identifying~$T\tens L\simeq GL(1)_L\times\ldots\times GL(1)_L$. The~$L$-linear extension of the Trace~$B_E$ form is, on~$L\times\ldots\times L$, the standard bilinear form
\[
B_E\tens L:\left((l_1,\ldots,l_{[E:\Q]}),(l'_1\ldots,l'_{[E:\Q]})\right)\mapsto
\sum_{i=1}^{[E:\Q]} l_i\cdot l'_i.
\]
As the canonical map~$Y(GL(1)_L)\to \mathfrak{gl}(1)_L\simeq L$ sends the identity embedding to~$1\in L$, the map~$Y(GL(1)_L)^{[E:\Q]}\simeq Y(T_{\ol{\Q}})\to \mathfrak{t}\tens L\simeq L^{[E:\Q]}$ is given by
\[
k_1\cdot y_1+\ldots +k_{[E:\Q]}\cdot y_{[E:\Q]}\mapsto (k_1,\ldots,k_{[E:\Q]})\in\Z^{[E:\Q]}\subseteq L^{[E:\Q]}.
\]
In particular, the basis~$y_1,\ldots,y_{[E:\Q]}\in Y(GL(1)^{[E:\Q]}_L)$, viewed in~$E\tens L\simeq  L\times\ldots\times L$ is the standard basis. It is thus orthonormal for the standard form~$B_E\tens L$. Consequently,~$y_1\wedge\ldots\wedge y_{[E:\Q]}$ is an orthonormal basis for the $L$-bilinear form~$\det (B_E\tens L)$ on~$\det(E)\tens L$. By definition of~$\eta_T$, we have
\[
\tau_E\tens L=\eta_T\tens L\text{ in }\Bigl({\bigwedge}^{[E:\Q]}E\Bigr)^{\tens2}\tens L,
\]
and thus~$\tau_E=\eta_T$.
\end{proof}
We denote by~$O_E\subseteq E$ the integral closure of~$\Z$ in~$E$.
\begin{proposition}\label{prop:lattice disc vs tenseur}
Let~$\Lambda\leq O_E$ be a sublattice, let~$\Lambda^\bot:=\{l\in L|B_E(l,\Lambda)\subseteq \Z\}$ be the~$B_E$-orthogonal of~$\Lambda$, and let
\begin{equation}\label{eq0:prop:lattice disc vs tenseur}
\Disc({\Lambda}):=[\Lambda^\bot:\Lambda]
\end{equation}
be its discriminant. Then, as lattices of~$\det(E)$,
\begin{equation}\label{eq1:prop:lattice disc vs tenseur}
\det({\Lambda})=\Disc(\Lambda)\cdot \det({\Lambda}^{\bot})
\end{equation}
and, as lattices of~$\det(E)^{\tens 2}$,
\begin{equation}\label{eq2:prop:lattice disc vs tenseur}
\det({\Lambda})^{\tens2}=\Disc(\Lambda)\cdot \eta_T\cdot \Z.
\end{equation}
\end{proposition}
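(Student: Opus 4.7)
The plan is to pick a $\Z$-basis of $\Lambda$ and carry out the computation through the Gram matrix. Fix a $\Z$-basis $e_1, \ldots, e_n$ of $\Lambda$ (where $n = [E:\Q]$), so that $\det(\Lambda) = \Z \cdot (e_1 \wedge \cdots \wedge e_n)$, and let $f_1, \ldots, f_n \in E$ be the $B_E$-dual basis, characterised by $B_E(e_i, f_j) = \delta_{ij}$. Then $f_1, \ldots, f_n$ is a $\Z$-basis of $\Lambda^\bot$, so $\det(\Lambda^\bot) = \Z \cdot (f_1 \wedge \cdots \wedge f_n)$. Denote by $G = (B_E(e_i, e_j))_{ij}$ the Gram matrix.

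For \eqref{eq1:prop:lattice disc vs tenseur}, symmetry of $B_E$ gives $e_i = \sum_j G_{ij} f_j$, hence
\[ e_1 \wedge \cdots \wedge e_n = \det(G) \cdot f_1 \wedge \cdots \wedge f_n. \]
The standard formula for the index of a sublattice gives $[\Lambda^\bot : \Lambda] = |\det G|$, so $\Disc(\Lambda) = |\det G|$ and the identity \eqref{eq1:prop:lattice disc vs tenseur} follows as an equality of lattices in $\det(E)$.

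For \eqref{eq2:prop:lattice disc vs tenseur}, I would work over a splitting field $L \subseteq \ol{\Q}$ of $T$. As in the proof of the preceding proposition, the basis $y_1, \ldots, y_n$ of $Y(T_{\ol{\Q}})$, viewed inside $\mathfrak{t} \tens L \simeq E \tens L \simeq L^n$, is the standard basis and is in particular orthonormal for $B_E \tens L$, so $\eta_T = (y_1 \wedge \cdots \wedge y_n)^{\tens 2}$. Writing $e_i = \sum_j a_{ij} y_j$ over $L$, orthonormality yields $G = A \cdot A^T$ with $A = (a_{ij})$, hence $\det G = \det(A)^2$. Taking the wedge and tensor-squaring:
\[ (e_1 \wedge \cdots \wedge e_n)^{\tens 2} = \det(A)^2 \cdot (y_1 \wedge \cdots \wedge y_n)^{\tens 2} = \det(G) \cdot \eta_T, \]
so $\det(\Lambda)^{\tens 2} = \Z \cdot \det(G) \cdot \eta_T = \Disc(\Lambda) \cdot \eta_T \cdot \Z$, proving \eqref{eq2:prop:lattice disc vs tenseur}.

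No step here is a genuine obstacle: the conceptual content, namely the identification of $\eta_T$ with the square of the wedge product of any orthonormal $B_E$-basis over the splitting field, has already been done in the preceding proposition. The only mild subtlety worth flagging is the distinction between $\det(G)$, which is a signed rational (equal to $\pm \Disc(\Lambda)$, since $B_E$ need not be positive definite), and $\Disc(\Lambda) \in \Z_{\geq 1}$; this sign is invisible in both statements, which assert equalities of $\Z$-lattices.
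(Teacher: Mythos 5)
Your proof is correct and takes essentially the same route as the paper's: the paper gets \eqref{eq1:prop:lattice disc vs tenseur} from the identity $[\det M:\det N]=[M:N]$ and \eqref{eq2:prop:lattice disc vs tenseur} by normalising a generator $\nu$ of $\det(\Lambda)$ to $\xi=\nu/\sqrt{\Disc(\Lambda)}$ and identifying $\xi^{\tens2}=\eta_T$ via the orthonormal basis $y_1,\ldots,y_{[E:\Q]}$ over the splitting field, which is exactly your Gram-matrix computation $G=AA^{T}$, $\det G=\det(A)^2$ written in coordinate-free form. If anything, your explicit bookkeeping of the sign of $\det G$ is slightly more careful than the paper's appeal to $\sqrt{\Disc(\Lambda)}$, which silently ignores that $\det(B_E)$ may be negative on $\det(\Lambda)$; as you note, the sign is immaterial for the stated equalities of lattices.
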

\begin{proof}Recall that any lattice~$M$ and a sublattice~$N\leq M$, we have~$[\det(M):\det(N)]=[M:N]$.  Thus~\eqref{eq0:prop:lattice disc vs tenseur} implies~\eqref{eq1:prop:lattice disc vs tenseur}.

By definition,~$\det({\Lambda}^{\bot})$ is the orthogonal~$\det({\Lambda})^{\bot}$ of~$\det({\Lambda})$ with respect to the bilinear form~$\det(B_E)$ on~$\det(E)$.

Let~$\nu$ be a generator of~$\det({\Lambda})=\nu\cdot\Z$.
Then~$\xi=\nu/\sqrt{\Disc(\Lambda)}$ is an orthonormal basis of~$\det(B_E)$. Therefore~$\xi=\pm y_1\wedge\ldots\wedge y_{[E:\Q]}$, and thus~$\xi^{\tens2}=\eta_T$.
 Finally
\[
\det({\Lambda})^{\tens2}=\nu^{\tens2}\cdot \Z=\Disc(\Lambda)\cdot\xi^{\tens2}\cdot \Z=\Disc(\Lambda)\cdot \eta_T\cdot \Z.\qedhere
\]
\end{proof}
\subsection{Discriminant-height of an linearly embedded torus}\label{sec:lin torus}
For an injective~$\rho:T\to GL(n)_\Q$, we define
\begin{equation}\label{eq:canonical embedded tensor}
\eta_\rho :=\rho_\star \eta_T\in V_{\dim(T)}
\text{ where }V_d:=\Bigl(\bigwedge^{d}\mathfrak{gl}(n,\Q)\Bigr)^{\tens 2}.
\end{equation}
The standard basis of~$\mathfrak{gl}(n)$ induces a standard basis of~$\bigwedge^{d}\mathfrak{gl}(n)$ and thus of~$V_d$. This induces an  affine Weil height function on~$V_d$, and the non-archimedean part  is given in terms of the~$\Z$-structure~$V_{d,\Z}:=(\bigwedge^{d}\mathfrak{gl}(n,\Z))^{\tens2}$ by
\[
H_f(\eta):=\min \{n\in\Z_{\geq1}|n\cdot \eta\in V_{d,\Z}\}.
\]
We define the \emph{discriminant-height of~$\rho$} as
\[
\delta_\rho:=H_f(\eta_\rho):=\min \{n\in\Z_{\geq1}|n\cdot \eta_\rho\in V_{d,\Z}\}.
\]
We identify~$T$ with its image by~$\rho$.
\begin{proposition}As algebraic varieties, we have
\[
GL(n)/N_{GL(n)}(T)\simeq GL(n)\cdot \eta_{\rho}.
\]

Moreover~$GL(n)\cdot \eta_{\rho}$ is a closed subvariety of~$V_d$.
\end{proposition}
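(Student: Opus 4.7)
The plan is to identify the stabiliser of $\eta_\rho$ under the adjoint action of $GL(n)$ on $V_{\dim(T)}$ with $N_{GL(n)}(T)$, and then conclude by the standard orbit--stabiliser isomorphism together with Richardson's closed-orbit theorem (already invoked in Proposition~\ref{prop:factor W}).

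Set $d := \dim(T)$ and $\omega := y_1 \wedge \cdots \wedge y_d$, viewed via $d\rho$ as a generator of the one-dimensional subspace $\det(d\rho(\mathfrak{t})) \subset \bigwedge^{d} \mathfrak{gl}(n)_{\ol{\Q}}$, so that $\eta_\rho = \omega^{\tens 2}$. If $g \in N_{GL(n)}(T)$, then $\mathrm{Ad}(g)$ preserves $d\rho(\mathfrak{t})$ and the induced automorphism of the cocharacter lattice $Y(T) \simeq \Z^d$ has determinant $\pm 1$; hence $g \cdot \omega = \pm \omega$ and $g \cdot \eta_\rho = \eta_\rho$. Conversely, if $g \cdot \eta_\rho = \eta_\rho$, expanding $g \cdot \omega$ in a basis of $\bigwedge^d \mathfrak{gl}(n)$ extending $\omega$ and comparing coefficients in the tensor square forces all other coefficients to vanish and the scalar on $\omega$ to satisfy $c^2 = 1$; so $g \cdot \omega = \pm \omega$. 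Since $\omega$ is a nonzero decomposable $d$-vector, this means $g$ preserves the subspace $d\rho(\mathfrak{t}) \subset \mathfrak{gl}(n)$; as $T$ is the unique connected algebraic subgroup of $GL(n)$ with Lie algebra $d\rho(\mathfrak{t})$ in characteristic zero, we conclude $g \in N_{GL(n)}(T)$.

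Once the stabiliser is identified, the orbit map $g \mapsto g \cdot \eta_\rho$ factors through a $GL(n)$-equivariant bijective morphism $GL(n)/N_{GL(n)}(T) \to GL(n) \cdot \eta_\rho$; in characteristic zero both source and image are smooth and the differential at the identity is injective, so this is an isomorphism of algebraic varieties, giving the first assertion. For closedness, $N_{GL(n)}(T)$ is reductive: its identity component $Z_{GL(n)}(T)$ is a Levi subgroup of $GL(n)$, hence reductive, while the component group is the finite Weyl group. Richardson's theorem, as in Proposition~\ref{prop:factor W}, then implies $GL(n) \cdot \eta_\rho$ is closed in $V_{\dim(T)}$.

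The main delicate step is the stabiliser computation, and specifically the reason $\eta_\rho$ is defined as a tensor square rather than as $\omega$ itself: using $\omega$ alone would recover only the index-two subgroup of $N_{GL(n)}(T)$ acting with determinant $+1$ on $Y(T)$, whereas the square absorbs the sign arising from $GL_d(\Z)$ and yields precisely the whole normaliser.
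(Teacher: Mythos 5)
Your identification of the stabiliser of~$\eta_\rho$ with~$N_{GL(n)}(T)$ is correct and is in substance the paper's argument: the paper obtains~$N_{GL(n)}(T)\subseteq H$ from the canonicity of~$\eta_T$ (your determinant~$\pm1$ computation on~$Y(T)$), and handles the converse via the Veronese map~$x\mapsto x^{\tens 2}$ together with~$N_{GL(n)}(T)=N_{GL(n)}(\mathfrak{t})=N_{GL(n)}(\det(\mathfrak{t}))$, which is exactly your coefficient computation plus the decomposability/Pl\"ucker step. The orbit--stabiliser isomorphism in characteristic zero is also fine.

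The gap is in the closedness claim. Richardson's theorem, as invoked in Proposition~\ref{prop:factor W}, is a criterion for orbits of \emph{tuples} of Lie-algebra elements under the simultaneous adjoint action: such an orbit is closed because the subalgebra generated by the tuple (there, the reductive Lie algebra~$\mathfrak{m}_{x'}$ or its pieces) is reductive. It is not a statement about an arbitrary vector of an arbitrary representation whose \emph{stabiliser} is reductive, and the implication ``reductive stabiliser~$\Rightarrow$ closed orbit'' is false in general: for~$GL(1)$ acting on~$\A^2$ by~$t\cdot(x,y)=(tx,t^{-1}y)$, the orbit of~$(1,0)$ has trivial, hence reductive, stabiliser but is not closed. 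Reductivity of the stabiliser is only the necessary condition coming from Matsushima's criterion (it gives affineness of the orbit, not closedness). Since~$\eta_\rho\in\bigl(\bigwedge^{d}\mathfrak{gl}(n)\bigr)^{\tens 2}$ is not a tuple of Lie-algebra elements, the transfer ``as in Proposition~\ref{prop:factor W}'' does not go through, and this is precisely why the paper argues differently at this point: the Levi~$L=Z_{GL(n)}(T)$ of a parabolic~$P=N\cdot L$ fixes~$\eta_\rho$, Kostant--Rosenlicht gives that~$P(\C)\cdot\eta_\rho=N(\C)\cdot\eta_\rho$ is closed, and then the decomposition~$GL(n,\C)=K\cdot P(\C)$ with~$K$ compact yields that the full orbit is closed (first analytically, hence Zariski closed). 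You would need to replace the appeal to Richardson by an argument of this kind, or by a direct Hilbert--Mumford/Kempf verification that no one-parameter degeneration leaves the orbit.
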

\begin{proof}We need to prove that~$N_{GL(n)}(T)$ is the stabiliser~$H$ of~$\eta_{\rho}$ in~$GL(n)$. We have~$\eta_{\rho}=\rho_\star\eta_{T}=\eta_{\rho(T)}=\eta_T$ and~$g\cdot \eta_{\rho}=\eta_{g\rho g^{-1}}=\eta_{gTg^{-1}}$. Thus~$\eta_\rho=g\cdot \eta_{\rho}$ whenever~$gTg^{-1}=T$. Thus
\[
N_G(T)\subseteq H.
\]

We denote by~$\mathfrak{t}\subseteq \mathfrak{gl}(n)$ the Lie algebra of~$T$.
We have~$\det(\mathfrak{t})\subseteq \bigwedge^{\dim(T)}\mathfrak{gl}(n)$.
As~$T$ is connected, we have
\[
N_{GL(n)}(T)=N_{GL(n)}(\mathfrak{t})=N_{GL(n)}(\det(\mathfrak{t})).
\]
The quadratic map~$x\mapsto x^{\tens2}$ induces an embedding, the Veronese embedding, from the projective space of~$\bigwedge^{\dim(T)}\mathfrak{gl}(n)$ to the projective space of~$\left(\bigwedge^{\dim(T)}\mathfrak{gl}(n)\right)^{\tens2}$. Thus,
\[
N_{GL(n)}(\det(\mathfrak{t}))=N_{GL(n)}(\det(\mathfrak{t})^{\tens2}).
\]
Note that~$\eta_\rho$ generates~$\det(\mathfrak{t})^{\tens2}$. Thus
\[
H\subseteq N_{GL(n)}(\det(\mathfrak{t})^{\tens2})=N_G(T).
\]
We have proven~$H=N_G(T)$ by double inclusion.

Let us prove the second assertion.
The centraliser of~$T$ in~$GL(n)$ is a Levi subgroup~$L$ of a parabolic subgroup~$P=N\cdot L\leq GL(n)$,
where~$N$, the radical of~$P$ is a unipotent group. By Kostant-Rosenlicht theorem \cite[Prop. 4.10]{BorelLAG}, the orbit~$N\cdot \eta_\rho$
is Zariski closed in~$V_d$. Observe that~$L\cdot \eta_\rho=\eta_\rho$. It follows that~$P\cdot \eta_\rho=N\cdot L\cdot \eta_\rho=N\cdot \eta_\rho$ is Zariski closed in~$V_d$.
Let~$K\leq G(\C)$ be a maxinal compact subgroup. Recall that~$G(\C)=K\cdot P(\C)$ (\cite[I.1.11, p.\,37]{BorelJi}). Since~$P(\C)\cdot \eta_\rho$ is closed and~$K$ is compact,
the set~$K\cdot P(\C)\cdot \eta_\rho$ is closed (for the archimedean topology). This implies that~$G(\C)\cdot \eta_\rho=K\cdot P(\C)\cdot \eta_\rho$
is Zariski closed in~$V_d$.
\end{proof}

\begin{corollary}On the conjugacy class~$W$ of~$T$, the map
\[
gTg^{-1}\mapsto \delta_{g\rho g^{-1}}
\]
is a finite part of an affine Weil Height function: that is, is of the form~$H_f(\iota(w))$ for an affine closed embedding~$W\to \A^m$. 
\end{corollary}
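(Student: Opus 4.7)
\begin{prf}{Proof plan}
The plan is to unpack the definitions using the closed embedding produced in the preceding proposition. Set
\[
\iota : W \to V_{\dim(T)}, \qquad gN_{GL(n)}(T) \mapsto g \cdot \eta_\rho.
\]
By the preceding proposition, $N_{GL(n)}(T)$ is exactly the stabiliser of $\eta_\rho$, so $\iota$ factors through an isomorphism of $W$ onto the orbit $GL(n) \cdot \eta_\rho$, which was shown in the same proposition to be Zariski closed in $V_{\dim(T)}$. The standard $\Z$-basis of $\mathfrak{gl}(n,\Z)$ induces a $\Q$-linear isomorphism $V_{\dim(T)} \simeq \A^m$ (with $m = \dim_\Q V_{\dim(T)}$) sending the $\Z$-lattice $V_{\dim(T),\Z}$ onto $\Z^m$. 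Composing with this linear identification, $\iota : W \hookrightarrow \A^m$ becomes a closed affine embedding defined over $\Q$, as required.

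Next I match heights. For $w = gTg^{-1} \in W$, the identity $g \cdot \eta_\rho = \eta_{g\rho g^{-1}}$, already used in the proof of the preceding proposition, gives $\iota(w) = \eta_{g\rho g^{-1}}$. By the definition of the non-archimedean affine Weil height on $\A^m$ relative to the standard basis, together with the definition of $\delta_\rho$ given in Section~\ref{sec:lin torus},
\[
H_f(\iota(w)) \;=\; \min\{\,n \in \Z_{\geq 1} \mid n \cdot \eta_{g\rho g^{-1}} \in V_{\dim(T),\Z}\,\} \;=\; \delta_{g\rho g^{-1}},
\]
which is the claimed equality.

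The substantive content of the corollary was already absorbed into the preceding proposition: both the fact that $N_{GL(n)}(T)$ is the full stabiliser of $\eta_\rho$ (so that $\iota$ is a well-defined embedding, not merely a map) and the Kostant--Rosenlicht plus Iwasawa decomposition argument showing $GL(n) \cdot \eta_\rho$ is Zariski closed (so that $\iota$ is genuinely a \emph{closed} affine embedding). Once these two properties are in hand, no further obstacle remains; the corollary reduces to the matching of definitions displayed above.
\end{prf}
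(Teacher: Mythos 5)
Your proof is correct and follows the same route the paper intends: the corollary is an immediate consequence of the preceding proposition, with $\iota$ being the orbit map $gTg^{-1}\mapsto g\cdot\eta_\rho = \eta_{g\rho g^{-1}}$ into $V_{\dim(T)}\simeq \A^m$, closed by the Kostant--Rosenlicht/Iwasawa argument, and the height matching being a direct unwinding of the definition of $\delta_\rho$.
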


\subsubsection{A particular case}
Let~$E$ be a commutative semisimple subalgebra of~$\End(\Q^n)$, and~$T:=Res_{E/\Q}(GL(1))$.
We can write~$E\simeq L_1^{\oplus m_1}\oplus\ldots\oplus L_f^{\oplus m_f}$ for fields~$L_1,\ldots,L_f$, 
and we have accordingly~$T\simeq \prod Res_{L_i/\Q}(GL(1))^{m_i}$.

Let~$O_E$ be the integral closure of~$\Z$ in~$E$. We have~$O_E\simeq {O_{L_1}}^{\oplus m_1}\oplus\ldots\oplus {O_{L_f}}^{\oplus m_f}$. The trace form~$B_E$ on~$E$ is the direct sum~${B_{L_1}}^{\oplus m_1}\oplus\ldots\oplus {B_{L_f}}^{\oplus m_f}$ of the trace forms~$B_{L_i}$. We have thus
\[
d_E:=\Disc(O_E):=[{O_E}^\bot:O_E]=\prod [{O_{L_i}}^\bot:O_{L_i}]^{m_i}=\prod {d_{L_i}}^{m_i}.
\]

\begin{lemma}\label{lem:disc splitting}
There exists~$c:\Z_{\geq1}\to\Z_{\geq1}$ such that, if~$L$ denotes  the splitting field of~$E$,
\[
d_E\leq {d_L}^{c([E:\Q])}\text{ and }d_L\leq {d_E}^{c([E:\Q])}.
\]
Moreover,~$L$ is also the splitting field of~$T$.
\end{lemma}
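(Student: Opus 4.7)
\begin{prf}[Proof plan]
My plan is to identify the two splitting fields Galois-theoretically, and then compare the discriminants by realising $\tilde L_i$ and $L$ as $\Q$-algebra direct factors of appropriate iterated tensor products.

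\emph{The ``moreover'' assertion.} First I would establish that both splitting fields coincide with the compositum $L':=\tilde L_1\cdots\tilde L_f$ of the Galois closures $\tilde L_i$ of the $L_i$. The cocharacter module of $T$ is the permutation $\Gal(\ol{\Q}/\Q)$-module $\bigoplus_i \Z[\Hom_{\Q\text{-alg}}(L_i,\ol{\Q})]^{\oplus m_i}$; its kernel of action is $\bigcap_i \Gal(\ol{\Q}/\tilde L_i)$, whose fixed field is $L'$. On the other hand, $L_i\tens_\Q F$ splits as a product of copies of $F$ exactly when $F\supseteq \tilde L_i$, so $L'$ is also the splitting field of the $\Q$-algebra $E$.

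\emph{The easy direction $d_E\leq d_L^{[E:\Q]}$.} From the tower formula $d_K^{[M:K]}\mid d_M$ applied to $\Q\subseteq L_i\subseteq L$, I would deduce $d_{L_i}\leq d_L$. Multiplying the formula $d_E=\prod d_{L_i}^{m_i}$ recalled just above the lemma then yields $d_E\leq d_L^{\sum m_i}\leq d_L^{[E:\Q]}$.

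\emph{The hard direction $d_L\leq d_E^c$.} I would proceed in two steps. Step (i): each Galois closure satisfies $d_{\tilde L_i}\mid d_{L_i}^{n_i^{n_i}}$ with $n_i=[L_i:\Q]$. This uses the fact that $\tilde L_i$ is a $\Q$-algebra direct factor of $L_i^{\tens n_i}$ (the factor corresponding to the $\Gal(\ol{\Q}/\Q)$-orbit of any tuple of pairwise distinct embeddings $L_i\hookrightarrow \ol{\Q}$), combined with the identity $\Disc(A\tens_\Z B)=\Disc(A)^{\mathrm{rank}\,B}\Disc(B)^{\mathrm{rank}\,A}$ applied iteratively to $\OOO_{L_i}^{\tens n_i}$, whose discriminant is thus a power of $d_{L_i}$; since $\OOO_{L_i}^{\tens n_i}$ is contained in the maximal order $\prod \OOO_{F_j}$ of $L_i^{\tens n_i}=\prod F_j$, the divisibility follows. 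Step (ii): $L$ is itself a $\Q$-algebra direct factor of $\tilde L_1\tens_\Q\cdots\tens_\Q \tilde L_f$, so the analogous computation gives $d_L\mid \prod d_{\tilde L_i}^{N_i}$ with $N_i$ depending only on the degrees $[\tilde L_i:\Q]\leq n_i!$. Composing steps (i) and (ii), $d_L\leq \prod d_{L_i}^{N_i\, n_i^{n_i}}\leq d_E^{c}$ for some $c$ depending only on $[E:\Q]$.

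\emph{Main obstacle.} The subtlety is ensuring that the exponent $c$ depends only on $[E:\Q]$, and not on the individual $L_i$ or the multiplicities $m_i$. The tensor-product realisation of $\tilde L_i$ and $L$ makes this uniformity transparent, since $[L:\Q]\leq \prod [L_i:\Q]!\leq ([E:\Q]!)^{[E:\Q]}$ bounds all the intervening ranks. Taking the maximum of the exponents arising in the two directions yields the desired single function $c:\Z_{\geq 1}\to\Z_{\geq 1}$.
\end{prf}
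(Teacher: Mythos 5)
Your plan is correct, and for the harder inequality it takes a genuinely different route from the paper. The paper deduces $d_L\leq d_E^{c([E:\Q])}$ purely from the compositum divisibility $d_{K\cdot K'}\mid d_K^{[K':\Q]}\cdot d_{K'}^{[K:\Q]}$: it first iterates this over $L_1,\ldots,L_f$ to bound $d_{L_1\cdots L_f}$ by a power of $d_E$, and then writes the Galois closure $L$ as the compositum of the (at most $[L':\Q]!$) conjugates of $L':=L_1\cdots L_f$, all of which have the same discriminant, to conclude. You instead realise $\tilde L_i$ as a $\Q$-algebra direct factor of $L_i^{\tens n_i}$ and $L$ as a factor of $\tilde L_1\tens\cdots\tens\tilde L_f$, and compare discriminants via $\Disc(\Lambda_A\tens_\Z\Lambda_B)=\Disc(\Lambda_A)^{\operatorname{rank}\Lambda_B}\Disc(\Lambda_B)^{\operatorname{rank}\Lambda_A}$ together with the fact that the discriminant of the maximal order $\prod_j \OOO_{F_j}$ divides that of any order it contains; this gives clean divisibility statements such as $d_{\tilde L_i}\mid d_{L_i}^{n_i^{n_i}}$ and makes the uniformity of the exponent in $[E:\Q]$ transparent, since all ranks involved are bounded by $([E:\Q]!)^{[E:\Q]}$. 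Your easy direction ($d_{L_i}\mid d_L$ from the tower formula, then $d_E=\prod d_{L_i}^{m_i}\leq d_L^{[E:\Q]}$) matches the paper's. You also give an explicit argument, via the kernel of the Galois action on the cocharacter permutation module and via when $L_i\tens_\Q F$ splits, for the ``moreover'' clause identifying both splitting fields with $\tilde L_1\cdots\tilde L_f$; the paper leaves this point implicit, so that is a welcome addition rather than a deviation. The only caveat is that this is a plan rather than a written-out proof, but every ingredient you invoke (the identification of the field factors of tensor powers with composita of conjugates, the Kronecker-product formula for trace-form discriminants, and the containment of any order in the maximal one) is standard and correctly applied, so filling in the details is routine.
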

\begin{proof}
For two number fields~$K,K'$, we have
\[
\max\{d_{K};d_{K'}\}\mid \operatorname{lcm}\{d_{K};d_{K'}\}\mid d_{K\cdot K'}\mid {d_{K}}^{[K':\Q]}\cdot {d_{K'}}^{[K:\Q]}.
\]
Thus~$K\leq K'$ implies~$d_{K}\mid d_{K'}$. If~$d_{L_m}=\max\{d_{L_i}\}$,
we deduce, for~$K=L_m$ and~$K'=L$
\[
d_E\leq {d_{L_m}}^{\sum m_i}\mid {d_{L}}^{\sum m_i}\mid{d_{L}}^{[E:\Q]}.
\]
Iterating~$K=L_1\cdot \ldots\cdot L_{i-1}$ and~$K'=L_i$, we deduce
\[
d_{L_1\cdot\ldots\cdot L_f}\leq (d_{L_1}\cdot \ldots \cdot d_{L_f})^{[L_1:\Q]\cdot \ldots\cdot [L_f:\Q]}\leq (d_E)^{[E:\Q]^f}.
\]
The Galois closure of~$L':=L_1\cdot\ldots\cdot L_f$ is~$L=L'_1\cdot \ldots L'_c$
where~$L'_i$ are the conjugates of~$L'$ and~$c\leq [L':\Q]!$ and~$[L':\Q]\leq \sum [L_i:\Q]\leq [E:\Q]$. We have
\[
d_{L}\leq (d_{L'_1}\cdot \ldots \cdot d_{L'_c})^{[L'_1:\Q]\cdot \ldots\cdot [L'_f:\Q]}= (d_{L'}^{c})^{[L':\Q]^c}.
\]
Finally~$d_{L}\leq {d_E}^{c([E:\Q])}$ with~$c([E:\Q])=[E:\Q]^{[E:\Q]+[E:\Q]!}\cdot [E:\Q]!$.
\end{proof}

Let~$\Lambda=E\cap \mathfrak{gl}(n,\Z)$. Then~$\Lambda$ is a lattice and a subalgebra, that is, is an \emph{order} in~$E$. We have
\[
\Disc(\Lambda):=[\Lambda^\bot:\Lambda]=d_E\cdot [O_E:\Lambda]^2.
\]

\begin{proposition}\label{prop:58}
We have, with~$\rho:T\to GL(n)$ the embedding induced by~$E\subseteq \End(\Q^n)$,
\[
\Disc(\Lambda)=\delta_\rho.
\]
\end{proposition}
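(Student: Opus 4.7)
The plan is to make the canonical tensor $\eta_\rho$ entirely explicit in terms of a generator of $\det(\Lambda)$, and then read off its height by a primitivity argument. First, under the identification $\Lie(T) \simeq E$ coming from the Weil restriction, the map on Lie algebras $\rho_\star : \mathfrak{t} \to \mathfrak{gl}(n,\Q)$ is simply the inclusion $E \hookrightarrow \End(\Q^n)$, so $\eta_\rho$ is the image of $\eta_T$ under the induced map $\det(E)^{\tens 2} \hookrightarrow V_{d,\Q}$.

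Second, I would extend Proposition~\ref{prop:lattice disc vs tenseur} from the number field case to the semisimple case. Writing $E \simeq \bigoplus_i L_i^{\oplus m_i}$, the torus decomposes as $T \simeq \prod_i (\Res_{L_i/\Q}GL(1))^{m_i}$, and the canonical tensor $\eta_T$ decomposes as the tensor product of the canonical tensors of the factors in $\det(E)^{\tens 2} \simeq \bigotimes_i \det(L_i)^{\tens 2 m_i}$. The same product structure holds for the trace form $B_E$, for $O_E$, and for $\Disc$. Hence, for any sublattice $\Lambda \leq O_E$, one still has
\[
\det(\Lambda)^{\tens 2} = \Disc(\Lambda)\cdot \eta_T\cdot \Z
\]
inside $\det(E)^{\tens 2}$.

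Third, I apply this identity with $\Lambda = E \cap \mathfrak{gl}(n,\Z)$, which is a unital subring and $\Z$-lattice of rank $d = \dim_\Q E$ in $E$, hence an order in $E$. The key technical observation is that $\Lambda$ is saturated in $\mathfrak{gl}(n,\Z)$: if $m \in \mathfrak{gl}(n,\Z)$ and $km \in \Lambda$ for some integer $k \geq 1$, then $km \in E$, hence $m \in E$ (since $E$ is a $\Q$-subspace), so $m \in E \cap \mathfrak{gl}(n,\Z) = \Lambda$. A $\Z$-basis $e_1,\ldots,e_d$ of $\Lambda$ therefore extends to a $\Z$-basis of $\mathfrak{gl}(n,\Z)$, which shows that $\nu := e_1 \wedge \cdots \wedge e_d$ is a basis vector of $\bigwedge^d \mathfrak{gl}(n,\Z)$, and consequently $\nu^{\tens 2}$ is a basis vector, hence primitive, in $V_{d,\Z}$.

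Finally, combining steps two and three gives $\nu^{\tens 2} = \pm\Disc(\Lambda) \cdot \eta_\rho$ inside $V_{d,\Q}$. Since $\nu^{\tens 2}$ is primitive in $V_{d,\Z}$, an integer $n \geq 1$ satisfies $n\cdot \eta_\rho \in V_{d,\Z}$ if and only if $\Disc(\Lambda) \mid n$, so $\delta_\rho = \Disc(\Lambda)$. The only slightly delicate steps are the bookkeeping required to extend Proposition~\ref{prop:lattice disc vs tenseur} to the semisimple setting and the saturation/primitivity verification; both are elementary, and everything else is a direct unwinding of definitions.
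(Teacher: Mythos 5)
Your proof is correct and is essentially the paper's argument: both extend Proposition~\ref{prop:lattice disc vs tenseur} to semisimple~$E$ and then use the saturation of~$\Lambda=E\cap\mathfrak{gl}(n,\Z)$ inside~$\mathfrak{gl}(n,\Z)$ to identify the height of~$\eta_\rho$ with~$\Disc(\Lambda)$. Your basis-extension/primitivity step for~$\nu^{\tens 2}$ is just a rephrasing of the paper's observation that~$\det(V)\cap\bigwedge^{\dim(V)}\Z^{n}=\det(V\cap\Z^{n})$ and~$V^{\tens2}\cap(\Z^n)^{\tens2}=(V\cap\Z^n)^{\tens2}$ for a subspace~$V\leq\Q^{n}$ (applied to~$E\leq\mathfrak{gl}(n,\Q)$), which the paper proves by moving~$V$ to a coordinate subspace via~$GL(n,\Z)$.
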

We observe that if~$V\leq \Q^n$ is a~$\Q$-linear subspace, then~$\det(V)\cap \bigwedge^{\dim(V)}\Z^n=\det(V\cap \Z^n)$, and~$V^{\tens2}\cap (\Z^n)^{\tens2}=(V\cap \Z^n)^{\tens2}$.
Indeed, up to the action of~$GL(n,\Z)$, we may assume~$V=\Q^d\oplus 0\subseteq \Q^d\oplus\Q^{n-d}$,
where~$d=\dim(V)$.
\begin{proof}Proposition~\ref{prop:lattice disc vs tenseur} works for~$E$, even if~$E$ is not a field (that is, if~$f\geq2$). Together with the above observations (for~$E\leq \mathfrak{gl}(n,\Q)$ as~$V\leq \Q^{n^2}$),
\[
\Disc(\Lambda)\cdot \eta_T\cdot \Z=\det(\Lambda)^{\tens2}= V_{d,\Z}\cap \det(E)^{\tens 2}=V_{d,\Z}\cap \eta_T\cdot \Q.
\]
By definition,~$\delta_\rho=H_f(\eta_T)$ and
\[
V_{d,\Z}\cap \eta_T\cdot \Z=H_f(\eta_T)\cdot \eta_T\cdot\Z.
\]
we also have
\[
V_{d,\Z}\cap \eta_T\cdot \Q=\Disc(\Lambda)\cdot \eta_T\cdot \Z\subseteq V_{d,\Z}\cap \eta_T\cdot \Z
\subseteq V_{d,\Z}\cap \eta_T\cdot \Q.
\]
It follows~$\Disc(\Lambda)\cdot \eta_T\cdot \Z= V_{d,\Z}\cap \eta_T\cdot \Z$ and~$\delta_\rho=H_f(\eta_T)=\Disc(\Lambda)$.
\end{proof}

\subsection{Proof of the Height-equals-Discriminant Theorem}
Let~$T\leq G=GL(n)$ be as in Theorem~\ref{thm:AOvsHeight}. Let~$E=\Q[T(\Q)]\leq \mathfrak{gl}(n,\Q)$ be the associative subalgebra genereated by~$T$. Then~$E$ is a commutative semisimple subalgebra. We define~$T_E=Res_{E/\Q}GL(1)$ and~$\Lambda=E\cap \mathfrak{gl}(n,\Z)$. 

Recall that~$T$ has finitely many~$GL(n)$-conjugates contained in~$T_E$. We deduce that the map~$G/N_G(T)\to G/N_G(T_E):gTg^{-1}\mapsto gT_Eg^{-1}$ between conjugacy classes is a finite morphism of algebraic varieties.
By functoriality of heights, the map~$gTg^{-1}\mapsto gT_Eg^{-1}\mapsto \delta_{T_E}$ is polynomially equivalent to a height function of~$G/N_G(T)$.

Using Proposition~\ref{prop:58}, and substituting the right-hand side of~\eqref{Height equiv disc thm}, we are to prove that, as~$T$ varies in a geometric conjugacy class,
\begin{equation}\label{proof:eq1}
d_L\cdot [T_{max}:T(\A_f)\cap K]\approx d_E\cdot [O_E:\Lambda].
\end{equation}

Arguing as in~\cite[Lem.~7.2]{Tsi} (denoting~$e_{[E:\Q]}$ the~$d_g$ of loc. cit.) we have
\[
[O_E:\Lambda]\leq 
[T_{max}:T(\A_f)\cap K]^{c_{[E;\Q]}}{d_E}^{e_{[E:\Q]}}.
\]
This proves, with~$a=\max\{c_{[E;\Q]};1+e_{[E:\Q]}\}$,
\begin{equation}\label{proof:eq2}
(d_E\cdot [T_{max}:T(\A_f)\cap K])^a\geq  d_E\cdot [O_E:\Lambda].
\end{equation}

Let us define~$\widehat{O_E}:=O_E\tens\widehat{\Z}$ and~$\widehat{\Lambda}=\Lambda\tens\widehat{\Z}$. Observe that~$[\widehat{O_E}:\widehat{\Lambda}]=[O_E:\Lambda]$.

Let~$\Gamma=(1+\wh{\Lambda})\cap \widehat{O_E}^\times$ denote the congruence subgroup associated with~$\wh{\Lambda}$.

We also have~$T_{max}\leq \widehat{O_E}^\times$ and
\[
T(\A_f)\cap K=T(\A_f)\cap GL(n,\widehat{\Z})= T(\A_f)\cap GL(n,\widehat{\Z})\cap \widehat{O_E}^\times=T(\A_f)\cap\Gamma.\]
Thus
\[
[T_{max}:T(\A_f)\cap K]\leq [ \widehat{O_E}^\times:\Gamma].
\]
As~$\Gamma$ is the stabiliser of the coset~$1+\widehat{\Lambda}\in \widehat{O_E}/\widehat{\Lambda}$, we have
\[
\widehat{O_E}^\times/\Gamma\simeq \widehat{O_E}^\times\cdot \{1+\widehat{\Lambda}\}\subseteq \widehat{O_E}/\widehat{\Lambda}.
\]
Thus
\[
[ \widehat{O_E}^\times:\widehat{\Lambda}^\times]\leq [\widehat{O_E}:\widehat{\Lambda}]= [{O_E}:{\Lambda}].
\]
Finally
\begin{equation}\label{proof:eq3}
d_E\cdot [T_{max}:T(\A_f)\cap K]\leq d_E\cdot [ \widehat{O_E}^\times:\widehat{\Lambda}^\times]\leq d_E\cdot [{O_E}:{\Lambda}]
\end{equation}

By~\eqref{proof:eq2} and~\eqref{proof:eq3}, as~$T$ varies in its conjugacy class,
\[
d_E\cdot [T_{max}:T(\A_f)\cap K]\approx d_E\cdot [O_E:\Lambda].
\]
By Lemma~\ref{lem:disc splitting}  we may substitute~$d_L$ with~$d_E$. This proves~\eqref{proof:eq1}.

\section{Large Galois orbits conjecture.}\label{sec:Galois conj}
 A fundamental difficulty when implementing the Pila-Zanier strategy is to obtain appropriate lower bounds on the size of Galois orbits. We conjecture the following.
\begin{conjecture}\label{conj:Bounds}
Let~$S$ be a Shimura variety, let~$F$ be a field of finite type over the reflex field of~$S$, and let~$s=[x,1]\in S(F)$. 

Then, as~$x'$ varies in the geometric hybrid orbit~$\Sigma^g(x)$,
\[
[F([x',1]):F]\approx H_f(x').
\]
where~$H_f$ denotes the natural height function as in~\eqref{eq:nat Hf}.
\end{conjecture}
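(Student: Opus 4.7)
The plan is to reduce the conjectured polynomial equivalence to a pair of opposite polynomial dominations, and to prove each of them separately by exploiting the multiplicative decomposition
\[
H_f(x')\approx H_f^{cent}(x')\cdot H_f^{der}(x')
\]
from~\eqref{eq:Hf vs cent et der}. The upper bound $[F([x',1]):F]\preccurlyeq H_f(x')$ should follow by a direct adelic argument: the Galois orbit of $[x',1]$ is controlled by the image of the Galois action on the canonical model, which lifts to an adelic element whose level (i.e.~the index of its stabiliser in $G(\A_f)$) is polynomially bounded by the height of the parameter $w(x')\in W_{y_i}(\Q)^+$ via standard functoriality of heights (Theorem~\ref{thm:func heights}). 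The content of the conjecture is therefore entirely in the lower bound.

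For the lower bound I would decompose the Galois representation attached to the hybrid orbit into a toric (central) part and a semisimple (derived) part. This is the role of the Tate-type decomposition carried out in~\S\ref{sec:Tate} following~\cite[App.\,C]{APZ2}: up to polynomial losses, the image of Galois on $[x',1]$ splits compatibly with the factorisation $M_{x'}=Z(M_{x'})\cdot M_{x'}^{der}$, so the index of its stabiliser dominates the product of the analogous indices on each factor. It is then enough to bound the central contribution from below by $H_f^{cent}(x')$, and the derived contribution from below by $H_f^{der}(x')$, up to a fixed power of $H_f^{cent}(x')$; the loss on the derived side is harmless because it cancels against the central bound when the two are recombined.

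The central bound is provided by the Height-equals-Discriminant Theorem~\ref{thm:AOvsHeight}: applying it to the Mumford-Tate torus $Z(M_{x'})$ reduces $H_f^{cent}(x')$ to a polynomial equivalent of $d_L\cdot[T_{max}:T(\widehat{\Z})]$. The adelic orbit analysis of~\cite[App.\,B]{APZ1} controls the index $[T_{max}:T(\widehat{\Z})]$, while the discriminant $d_L$ of the splitting field is controlled by the bounds for special points coming from the averaged Colmez conjecture via~\cite{TsiAG}; this is precisely the input used in the proof of André-Oort in abelian type, and it is available because $(G,X)$ is of abelian type. These two estimates are the content of~\S\ref{sec:adelic tori} and~\S\ref{sec:Auxiliary torus}. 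The derived bound is obtained in~\S\ref{sec:geometric bounds} by transposing the APZ strategy: the Tate conjecture (Faltings) applied to the abelian varieties parametrised by $S$ controls the image of Galois on $M^{der}_{x'}$ in terms of the geometric data of the isogeny, and the height $H_f^{der}(x')$ coincides, up to polynomial equivalence, with the natural measure of this isogeny class via Theorem~\ref{thm:func heights}.

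The main obstacle, and the reason the hybrid setting is genuinely harder than either André-Oort or André-Pink-Zannier, is the interaction between the central and derived parts: $M^{der}_{x'}$ varies along the orbit (unlike in APZ, where the derived group is fixed), and simultaneously the torus $Z(M_{x'})$ need not be a CM torus for a point of the ambient $(G,X)$ (unlike in André-Oort). One must therefore implement the Tate-style decomposition of~\S\ref{sec:Tate} in a way that is genuinely uniform in $x'$, which forces the absorption of a power of $H_f^{cent}$ into the derived bound; keeping this loss under control and verifying that the hypotheses of Faltings' isogeny theorem apply uniformly to the family of abelian varieties attached to a hybrid orbit in an abelian-type Shimura variety is where the real work lies.
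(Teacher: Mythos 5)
Your outline reproduces the paper's own proof of the abelian-type case (Theorem~\ref{thm:tech}, which is all that is established -- the general conjecture stays open): the upper bound is set aside as routine, and the lower bound is split exactly as in \S\ref{sec:dicho} into $[F([x',1]):F]\succcurlyeq H_f^{cent}(x')$ -- obtained from the Tate-property decomposition of \S\ref{sec:Tate} (Theorem~\ref{thm:puissancecentre}), the adelic torus bounds of \S\ref{sec:adelic tori} (Theorem~\ref{thm:84}), the Height-equals-Discriminant Theorem~\ref{thm:AOvsHeight}, and the Tsimerman/Colmez input of \S\ref{sec:Auxiliary torus} -- together with $H_f^{cent}(x')\cdot[F([x',1]):F]\succcurlyeq H_f^{der}(x')$ from \S\ref{sec:geometric bounds}. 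The recombination you describe, where the central loss on the derived side is absorbed, is precisely the dichotomy trick~\eqref{eq:dichotomy trick}, so your proposal is essentially the same argument as the paper's.
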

Here is the main technical result of this article.
\begin{theorem}\label{thm:tech}
Conjecture~\ref{conj:Bounds} holds true if~$S$ is of abelian type.
\end{theorem}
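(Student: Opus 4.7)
The plan is to combine the height decomposition~\eqref{eq:Hf vs cent et der}
\[
H_f(x')\approx H_f^{cent}(x')\cdot H_f^{der}(x')
\]
with separate polynomial lower bounds on~$[F([x',1]):F]$ in terms of each of the two factors. The upper bound~$[F([x',1]):F]\preccurlyeq H_f(x')$ I would expect to obtain by a standard argument reducing the field of definition of the Mumford-Tate datum at~$x'$ to the field of definition of the corresponding preimage~$w(x')\in W(\Q)^+$ under the parametrisation of Proposition~\ref{prop:unicity}; the substantive content lies entirely in the matching lower bound.

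For the central factor~$H_f^{cent}(x')$, Theorem~\ref{thm:AOvsHeight} identifies it (up to polynomial equivalence) with the torus discriminant
\[
disc_K(Z(M_{x'}))=d_L\cdot [T_{max}:Z(M_{x'})(\widehat{\Z})],
\]
so it suffices to bound~$[F([x',1]):F]$ from below by each of the two factors separately. For the adelic index~$[T_{max}:Z(M_{x'})(\widehat{\Z})]$, I would apply the results of Section~\ref{sec:adelic tori}, combining the decomposition of the image of the Galois representation established in Section~\ref{sec:Tate} with the adelic orbit analysis of~\cite[App.~B]{APZ1}. This is where the assumption that~$(G,X)$ is of abelian type enters, via Faltings' isogeny theorem and the Tate conjecture for the associated abelian variety, as in~\cite[App.~C]{APZ2}. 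For the splitting-field discriminant~$d_L$, I would invoke the auxiliary torus construction of Section~\ref{sec:Auxiliary torus} to reduce to the Galois bound for special points proved in the course of the André-Oort conjecture, which is available for abelian type Shimura varieties through the averaged Colmez formula, \emph{cf.}~\cite{TsiAG}.

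For the derived factor~$H_f^{der}(x')$, I would follow Section~\ref{sec:geometric bounds}: adapting the methods developed in~\cite{APZ1,APZ2} for the generalised André-Pink-Zannier conjecture to the hybrid setting and combining them with the Tate-type decomposition of Section~\ref{sec:Tate}, one obtains a lower bound of the form
\[
[F([x',1]):F]\succcurlyeq H_f^{der}(x')\cdot H_f^{cent}(x')^{-c}
\]
for some fixed exponent~$c$. Multiplying this with a sufficient power of the central estimate above absorbs the negative factor of~$H_f^{cent}(x')$ and yields a polynomial lower bound by~$H_f^{der}(x')\cdot H_f^{cent}(x')\approx H_f(x')$, as required.

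The main obstacle is the interaction between these bounds: the derived estimate is only valid modulo a power of the central height, so the two central estimates must be sharp enough (once combined) to absorb the slack. This is precisely why the discriminant is split into its two components and each is handled by a distinct mechanism---$d_L$ via the André-Oort reduction and the adelic index via the Tate-orbit analysis. Another delicate point is that the natural height function depends on the choice of representative among the finitely many~$y_i$ of~\eqref{eq:yis}, so the bounds must be uniform across the geometric orbits comprising the hybrid orbit. Once all these ingredients are assembled the conclusion follows by a routine application of the polynomial-equivalence formalism already set up in Section~\ref{sec:Galois conj}.
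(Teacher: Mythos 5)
Your proposal is correct and follows essentially the same route as the paper: you bound $[F([x',1]):F]$ below by $H_f^{cent}(x')$ by combining the auxiliary-torus/André--Oort bound for the splitting-field discriminant with the Tate-property and adelic-orbit bound for the index $[T_{max}:Z(M_{x'})(\widehat{\Z})]$ (identified with the central height via the Height-equals-Discriminant theorem), you bound the derived part only up to a power of $H_f^{cent}$ as in Section~\ref{sec:geometric bounds}, and you combine the two estimates by the polynomial-equivalence trick of Section~\ref{sec:Galois conj}. This is precisely the paper's argument, namely Route~\eqref{Route 1} together with Theorems~\ref{thm:AOvsHeight}, \ref{thm:puissancecentre}, \ref{thm:84} and~\ref{th:sec10}.
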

\subsection{Dichotomy strategy}\label{sec:dicho}
To make our strategy precise, we introduce some notations. 

Let~$S$ be a Shimura variety, and let~$F/E$ be a finite type extension of the reflex field of~$S$. We consider a geometric hybrid orbit~$\Sigma^g(s)\subseteq S(\ol{K})$ of a point~$s=[x,1]\in S(F)$, and the parametrisation
\[
W(\Q)^+\to \Sigma^g(s)
\]
and the embedding~$w\mapsto (w^{cent},w^{der}):W\to G/N\times G/Z$.

We can reformulate Conjecture~\ref{conj:Bounds}.
\begin{conjecture}\label{conj:Bounds bis}.
As~$x'$ varies in the geometric hybrid orbit~$\Sigma^g(x)$,
\begin{equation}\label{conjW:eq}
[F([x',1]):F]\succcurlyeq H_f(w(x')).
\end{equation}
\end{conjecture}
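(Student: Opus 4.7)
The plan is to exploit the factorisation~\eqref{eq:Hf vs cent et der},
\[
H_f(w(x')) \approx H_f(w^{cent}(x'))\cdot H_f(w^{der}(x')),
\]
and split Conjecture~\ref{conj:Bounds bis} into two lower bounds: one for the central (toric) component~$H_f^{cent}$ and one for the derived (semisimple) component~$H_f^{der}$. Because polynomial domination~$\succcurlyeq$ is stable under products, a bound of the shape~$[F([x',1]):F] \succcurlyeq H_f^{cent}(x')$ combined with a bound of the shape~$[F([x',1]):F]\cdot H_f^{cent}(x')^{A} \succcurlyeq H_f^{der}(x')$ (for some fixed~$A$) will be enough. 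This is the dichotomy foreshadowed in~\S\ref{sec:dicho}.

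\textbf{Central (toric) bound.} The rational point~$w^{cent}(x')$ records the~$\Q$-conjugacy class of the connected centre~$T:=Z(M_{x'})^0$ of the Mumford-Tate group. Applying the Height-equals-Discriminant Theorem~\ref{thm:AOvsHeight} to the linear embedding of~$T$ arising from a faithful representation~$G\hookrightarrow GL(N)$ used to define~$H_f$ identifies~$H_f^{cent}(x')$, up to~$\approx$, with the discriminant~$d_L\cdot [T_{\max}:T(\widehat{\Z})]$, where~$L$ is the splitting field of~$T$. It is therefore enough to bound~$[F([x',1]):F]$ below by a power of~$d_L$ and by a power of the index~$[T_{\max}:T(\widehat{\Z})]$ separately. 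The index bound (\S\ref{sec:adelic tori}) will be obtained by combining the adelic-orbit analysis of~\cite[App. B]{APZ1} with the Tate-type decomposition of~\S\ref{sec:Tate}, the latter resting on the Tate conjecture established by Faltings in the abelian-type regime (\cite[App. C]{APZ2}). The discriminant bound (\S\ref{sec:Auxiliary torus}) will be obtained by attaching to~$T$ an auxiliary special point whose Galois orbit is governed by Tsimerman's lower bounds~\cite{TsiAG} (themselves a consequence of the averaged Colmez conjecture), and transporting those bounds back to~$[F([x',1]):F]$.

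\textbf{Derived (semisimple) bound.} For~$w^{der}(x')\in (G/Z_G(M_{x'}^{der}))(\Q)$ I would follow the scheme of~\cite{APZ1,APZ2}. The Tate decomposition of~\S\ref{sec:Tate} of the adelic Galois representation associated with the reference Hodge-generic point~$x$, combined with classical lower bounds on the image of semisimple adelic Galois representations attached to abelian varieties (in the abelian-type case), should provide an adelic index that polynomially dominates~$H_f^{der}(x')$, up to a factor that is polynomial in~$H_f^{cent}(x')$. The central bound obtained in the previous step then absorbs this extra factor, yielding the clean estimate~\eqref{conjW:eq}.

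\textbf{Main obstacle.} The genuinely delicate point is the derived bound together with its ``up to a power of~$H_f^{cent}$'' proviso: the Mumford-Tate group~$M_{x'}$ is in general only an almost-direct, not a direct, product~$Z(M_{x'})\cdot M_{x'}^{der}$, and the induced splitting of the Galois representation into a toric and a semisimple part therefore incurs an error which has to be controlled \emph{uniformly} over the hybrid orbit by a fixed power of the central discriminant. This is precisely the place where the abelian-type assumption is used in an essential way, via the Faltings input that supplies the Tate conjecture required in~\S\ref{sec:Tate}; outside the abelian-type regime the same scheme would be blocked at exactly this juncture.
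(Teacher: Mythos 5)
Your proposal is correct and follows essentially the same route as the paper: the dichotomy of \S\ref{sec:dicho} (Route~\eqref{Route 1}), with the central bound obtained by combining the index estimate of \S\ref{sec:adelic tori} (via Theorems~\ref{thm:puissancecentre} and~\ref{thm:84}) with the discriminant estimate of \S\ref{sec:Auxiliary torus} and identifying the product with~$H_f^{cent}$ through Theorem~\ref{thm:AOvsHeight}, and the derived bound up to a power of~$H_f^{cent}$ supplied by Theorem~\ref{th:sec10} resting on \S\ref{sec:Tate}. Your closing remark on the almost-direct product~$Z(M_{x'})\cdot M_{x'}^{der}$ is exactly the difficulty the paper handles via Theorems~\ref{thm:Uestproduit} and~\ref{thm:complement}, so the plan matches the paper's argument in both structure and inputs.
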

Note that~$H_f(w)\approx \max\{H_f(w^{cent});H_f(w^{der})\}$.
\begin{lemma}Let~$\Xi\subseteq\Sigma^g(x)$ and define~$\gamma(x'):=[F([x',1]):F]$. Assume that, as functions of~$x'\in\Xi$, at least one of the following holds true.
\begin{align}
\label{Route 1}
\gamma(x')\succcurlyeq H_f(w(x')^{cent})\text{ and }H_f(w(x')^{cent})\cdot \gamma(w)\succcurlyeq H_f(w(x')^{der}).\\
\label{Route 2}
\gamma(x')\succcurlyeq H_f(w(x')^{der})\text{ and }H_f(w(x')^{der})\cdot\gamma(w)\succcurlyeq H_f(w(x')^{cent}).
\end{align}
Then, as functions of~$w\in\Xi$,
\begin{equation}\label{eq:42}
\gamma(x')\succcurlyeq H_f(w(x')).
\end{equation}
\end{lemma}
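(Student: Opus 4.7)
The plan is straightforward: combine the two polynomial-domination hypotheses of each route using transitivity and the elementary fact that $\succcurlyeq$ is preserved under products of quantities bounded below by $1$. Since the remark $H_f(w) \approx \max\{H_f(w^{cent}); H_f(w^{der})\}$ is recalled just before the lemma, it suffices to show that $\gamma(x')$ polynomially dominates both $H_f(w(x')^{cent})$ and $H_f(w(x')^{der})$.

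Consider first the case of \eqref{Route 1}. The first half of the hypothesis provides constants $C_1, c_1 > 0$ such that $H_f(w(x')^{cent}) \leq C_1 \gamma(x')^{c_1}$ for every $x' \in \Xi$. Substituting this bound into the second half, which asserts $H_f(w(x')^{der}) \leq C_2 \bigl(H_f(w(x')^{cent}) \cdot \gamma(x')\bigr)^{c_2}$ for appropriate $C_2, c_2 > 0$, yields
\[
H_f(w(x')^{der}) \leq C_2 C_1^{c_2}\, \gamma(x')^{(c_1+1)c_2},
\]
so that $\gamma(x') \succcurlyeq H_f(w(x')^{der})$ as well. The case \eqref{Route 2} follows by the exact symmetric argument, interchanging the roles of $w^{cent}$ and $w^{der}$.

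Either way, $\gamma(x')$ polynomially dominates each of $H_f(w(x')^{cent})$ and $H_f(w(x')^{der})$, hence also their maximum, and therefore $H_f(w(x'))$ by the noted equivalence. This gives \eqref{eq:42}. There is no serious obstacle here: the lemma is purely a formal manipulation in the polynomial-equivalence calculus, serving to organise the dichotomy. The actual work of establishing \eqref{Route 1} or \eqref{Route 2} in concrete situations is the substance of the subsequent sections.
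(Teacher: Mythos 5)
Your proof is correct and follows essentially the same route as the paper: deduce from the two halves of the chosen route that $\gamma(x')$ polynomially dominates both $H_f(w(x')^{cent})$ and $H_f(w(x')^{der})$ (the paper phrases the substitution as $f\approx f^2\succcurlyeq fg\succcurlyeq h$, which is the same manipulation as your explicit constants), and then conclude via $H_f(w)\approx\max\{H_f(w^{cent});H_f(w^{der})\}$. No gaps; your reading of $\gamma(w)$ as $\gamma(x')$ is the intended one.
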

\begin{proof}We apply the following. For~$f,g,h:\Xi\to \R_{\geq1}$, we have
\begin{multline}\label{eq:dichotomy trick}
fg\succcurlyeq h\text{ and }  f\succcurlyeq{g}
\Rightarrow
\\
f\approx f^2\succcurlyeq fg\succcurlyeq h\text{ and }  f\succcurlyeq{g}
\Rightarrow
\\
f\succcurlyeq \max\{g;h\}\geq \sqrt{g\cdot h}\approx g\cdot h.\qedhere
\end{multline}
\end{proof}

%

\section{Consequences of the Uniform Integral Tate property}\label{sec:Tate}

We consider a Shimura datum~$(M,X_M)$ and a Hodge generic~$x\in X_M$ and an extension~$F/E(M^{ad},X^{ad}_M)$ such that~$[x^{ad},1]\in Sh_K(M^{ad},X^{ad}_M)(F)$ for some open compact subgroup~$K\leq M^{ad}(\A_f)$.
Let~$(x,g)\mapsto [x,g]$ denote the map from~$\{x\}\times M(\A_f)$ to the infinite level Shimura variety~$Sh_{}(M,X_M)$. The group~$\frac{M(\A_f)}{Z(M)(\Q)}$ acts on the right on~$Sh_{}(M,X_M)$. There is a unique representation
\begin{equation}\label{eq:defi rhox}
\rho_x:Gal(\ol{F}/F\cdot E(M,X_M))\to \frac{M(\A_f)}{Z(M)(\Q)},
\end{equation}
such that
\[
\forall \sigma\in Gal(\ol{F}/F\cdot E(M,X_M)), \sigma([x,1])=[x,1]\cdot \rho_x(\sigma).
\] 

We denote by
\begin{equation}\label{eq:defi Uxf}
U_{x,F} \leq \frac{M(\A_f)}{Z(M)(\Q)}
\end{equation}
 the image of~$\mathcal{G}_{x,F}:=Gal(\ol{F}/F\cdot E(M,X_M))$.

The integral uniform Tate property is defined in~\cite[Def.~2.1]{APZ2}. Recall that if~$(M,X)$ is of abelian type and~$F$ is of finite type over~$\Q$, then~$U_{x,F}$ satisfies the integral uniform Tate property. 

The following extends a result of Serre on a conjecture of Lang~\cite{SerreOEuvres4}.
\begin{theorem}\label{thm:puissancecentre}
Assume that~$U_{x, F}$ satisfies the integral uniform Tate property. Let~$(G,X_G)$ be such that~$(M,X)$ is a subdatum of~$(G,X_G)$. Then there exists~$h\in\Z_{\geq1}$, depending only on~$x^{ad}$ and~$F$ and~$(G,X_G)$, such that
\[
\forall z \in Z(M)(\A_f),
z^h\cdot {Z(M)(\Q)}\in U_x.
\]

\end{theorem}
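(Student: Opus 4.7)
The strategy is to reduce the statement to Serre's theorem on Lang's conjecture for Galois images in tori, by projecting through the abelianization of~$M$.

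The first step is to introduce the canonical isogeny $\pi\colon Z(M)\to M^{\ab}:=M/M^{\der}$ of $\Q$-tori, whose kernel $K_{0}:=Z(M)\cap M^{\der}$ is finite since $M$ is reductive. The quotient $M\to M^{\ab}$ induces a homomorphism
\[
\bar\pi\colon M(\A_f)/Z(M)(\Q)\longrightarrow M^{\ab}(\A_f)/\pi(Z(M)(\Q))
\]
sending $U_{x,F}$ onto a subgroup $V_{x,F}$, and the composition $\mathcal{G}_{x,F}\to U_{x,F}\to V_{x,F}$ coincides with the reciprocity morphism attached to the abelian Shimura datum $(M^{\ab},x^{\ab})$ obtained by push-forward.

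The second step is to exploit the integral uniform Tate property of $U_{x,F}$ to extract a corresponding integral control on $V_{x,F}$. Since every $\Q$-rational character of $M$ factors through $M^{\ab}$, applying the integral uniform Tate property to these characters (viewed inside representations of $G$ coming from the datum $(G,X_{G})$) forces $V_{x,F}$ to be commensurable with an open compact subgroup of its Zariski closure in $M^{\ab}(\A_f)/\pi(Z(M)(\Q))$, and in particular places the study of $V_{x,F}$ within Serre's framework.

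The third step is to apply Serre's theorem on Lang's conjecture (cf.\ \cite{SerreOEuvres4}): there exists $h_{1}\geq 1$, depending only on $x^{\ab}$ and $F$, such that for every $w\in M^{\ab}(\A_f)$ one has $w^{h_{1}}\in V_{x,F}\cdot\pi(Z(M)(\Q))$. Taking $w=\pi(z)$ for $z\in Z(M)(\A_f)$ yields $\pi(z^{h_{1}})\in V_{x,F}\cdot\pi(Z(M)(\Q))$. Lifting through the isogeny $\pi$ introduces an ambiguity by the finite group $K_{0}(\A_{f})$, which is killed by raising to a further power; setting $h$ to be $h_{1}$ times the exponent of this kernel produces an integer with $z^{h}\cdot Z(M)(\Q)\in U_{x,F}$ for every $z\in Z(M)(\A_{f})$.

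\textbf{Main obstacle.} The essential difficulty lies in the second step: one must convert the integral uniform Tate property---a statement about the image of $\rho_{x}$ with respect to lattices arising from representations of $G$---into integral control of the abelian projection $V_{x,F}$ sufficient to invoke Serre's theorem. The dependence of $h$ on the ambient datum $(G,X_{G})$ (rather than on $(M,X_{M})$ alone) reflects precisely this point: the integrality witnesses and the denominators accumulated when lifting through the finite kernel $K_{0}$ are controlled by $G$-equivariant lattices, so the resulting exponent is tied to the embedding $M\hookrightarrow G$.
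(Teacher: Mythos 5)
There is a genuine gap at the heart of your argument, in the passage from Step 3 to the conclusion. What the reciprocity law together with Serre/Clozel--Ullmo actually gives you is a statement about the \emph{image} $V_{x,F}$ of $U_{x,F}$ in $M^{\ab}(\A_f)/M^{\ab}(\Q)$: for every $z\in Z(M)(\A_f)$ the class of $\pi(z)^{h_1}$ lies in $V_{x,F}$. This only says that \emph{some} $u\in U_{x,F}$ has the same abelianized image as $z^{h_1}$; such a $u$ differs from $z^{h_1}$ by an element of the kernel of $M(\A_f)/Z(M)(\Q)\to M^{\ab}(\A_f)/M^{\ab}(\Q)$, which contains (the image of) $M^{\der}(\A_f)$ and is neither finite nor of finite exponent. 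Your ``lifting through the isogeny $\pi$'' only disposes of the finite kernel $Z(M)\cap M^{\der}$ of $Z(M)\to M^{\ab}$; it does not remove the unbounded $M^{\der}$-ambiguity, and no power of $u$ will land on a central element. So the desired conclusion $z^{h}\cdot Z(M)(\Q)\in U_{x,F}$ does not follow from what you have established.

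Bridging exactly this gap is where the integral uniform Tate property is really used, and it is the content of the paper's Theorem~\ref{thm:Uestproduit} on the near-product structure of $U_x$: via the Tate property one proves bounded abelianization of the adjoint Galois image and then, by a Goursat-lemma argument, that a bounded-index, bounded-exponent subgroup $U'_x$ is, up to bounded index, the product of $U'_x\cap M^{\der}(\A_f)/Z(M^{\der})(\Q)$ and $U'_x\cap Z(M)(\A_f)/Z(M)(\Q)$. Only with this splitting can the abelianized lower bound (obtained in the paper, much as in your Step 3, from Deligne--Shimura reciprocity plus Clozel--Ullmo, with Borel's finiteness theorem to pass between $Z(M)$ and $M^{\ab}$ through the isogeny) be transferred into the statement that $U_x$ itself contains high powers of all central adelic elements. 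Your Step 2, which invokes the Tate property only to control $V_{x,F}$, targets the wrong group: the abelian image is already completely described by reciprocity, and the difficulty is how $U_{x,F}$ sits above that image.
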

We consider the natural maps
\[
(\pi_{ad},\pi_{ab}):
\frac{M(\A_f)}{Z(M)(\Q)}\to M^{ad}(\A_f)\times \frac{M^{ab}(\A_f)}{M^{ab}(\Q)}.
\]
and denote  by~$(\ol{\pi_{ad}},\ol{\pi_{ab}}):M^{ad}(\A_f)\times \frac{M^{ab}(\A_f)}{M^{ab}(\Q)}\to M^{ad}(\A_f)\times \frac{M^{ab}(\A_f)}{M^{ab}(\Q)}$ the identity map.
Let
\[
\Gamma_{x,F}\leq \frac{M(\A_f)}{Z(M)(\Q)}
\]
 be the image of~$U_{x,F}$.
We define~$Y_{x,F}:=\ol{\pi_{ad}}(\Gamma_{x,F})=\pi_{ad}(U_{x,F})$. We note that~$[\mathcal{G}_{x^{ad},F}:\mathcal{G}_{x,F}]\leq [E(M,X): E(M^{ad},X^{ad})]\leq [E(M^{ab},X^{ab}):\Q]\leq i(\dim(M^{ab}))$ 
for some universal function~$i:\Z_{\geq1}\to\Z_{\geq1}$. We have~$[Y_{x^{ad},F}:Y_{x,F}]\leq i(\dim(M^{ab}))$.

\begin{theorem}\label{thm:Uestproduit}
Assume that~$U_x$ satisfies the integral uniform Tate property. Then there exists~$e,c\in\Z_{\geq1}$ and
\begin{equation}\label{Goursat}
Y'_{x^{ad},F}\subseteq Y_{x^{ad},F},
\end{equation}
 depending only on~$x^{ad}$ and~$F$, 
such that~
\begin{equation}\label{eqgoursat1}
\forall y\in Y_{x^{ad},F}, y^e\in Y'_{x^{ad},F},
\end{equation}
and that, denoting~$\Gamma'_x$ the inverse image of~$Y'_{x^{ad}}$ in~$\Gamma_x$,
we have
\begin{equation}\label{Goursat Gamma}
\left[\Gamma'_x:\left(\Gamma'_x\cap M^{ad}(\A_f)\right)\cdot \left(\Gamma'_x\cap \frac{M^{ab}(\A_f)}{M^{ab}(\Q)}\right)\right]\leq c.
\end{equation}
and, denoting by~$U'_x$ the inverse image of~$\Gamma'_x$ in~$U_x$, we have
\begin{equation}\label{Uprimisproduct}
\left[U'_x:\left(U'_x\cap \frac{M^{der}(\A_f)}{Z(M^{der})(\Q)}\right)\cdot \left(U'_x\cap \frac{Z(M)(\A_f)}{Z(M)(\Q)}\right)\right]\leq c \cdot g,
\end{equation}
where~$g:=\#\ker\left(H^1(\Q;Z(M^{der}))\to H^1(\A_f;Z(M^{der})\right)<+\infty$. 

Finally
\begin{equation}\label{thm:goursat:eq:exposant}
\forall \gamma\in \Gamma_x,\gamma^e\in \Gamma'_x\qquad\forall \gamma\in U_x,\gamma^e\in U'_x
\end{equation}
\end{theorem}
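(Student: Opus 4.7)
The plan is to decompose~$\Gamma_{x,F}$ inside the product~$A\times B$ with~$A := M^{ad}(\A_f)$ and~$B := M^{ab}(\A_f)/M^{ab}(\Q)$ via Goursat's lemma, and then to lift the decomposition from~$\Gamma_{x,F}$ up to~$U_{x,F}$ through an isogeny argument involving the Galois cohomology of~$Z(M^{der})$.

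First I will apply Goursat's lemma to~$\Gamma := \Gamma_{x,F} \subseteq A\times B$ to identify the Goursat quotient
\[
Q := \Gamma/((\Gamma\cap A)\cdot(\Gamma\cap B))
\]
canonically with both~$Y_{x^{ad},F}/(\Gamma\cap A)$ and~$\pi_{ab}(\Gamma)/(\Gamma\cap B)$. As~$B$ is abelian, the second description forces~$Q$ to be abelian, so the first description exhibits~$Q$ as an abelian quotient of~$Y_{x^{ad},F}$. The key step is to show that~$|Q|$ admits a universal bound in terms of~$x^{ad}$ and~$F$: I will feed in the integral uniform Tate property, using~\cite[App.~C]{APZ2} to describe~$Y_{x^{ad},F}$ as containing, up to finite index, a product~$\prod_\ell K_\ell$ where~$K_\ell$ is, for almost all~$\ell$, the full $\Z_\ell$-points of a semisimple integral model of~$M^{ad}$. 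Since~$M^{ad}$ is of adjoint semisimple type, strong approximation for its simply connected cover trivializes the abelianization of such a~$K_\ell$ at almost all primes, and the remaining finitely many local factors contribute only a bounded abelian quotient; this will yield a uniform bound~$|Q|\leq e_0$ depending only on the initial data, and in particular produce an exponent~$e$ annihilating~$Q$.

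Next I will set~$Y'_{x^{ad},F} := \Gamma_x\cap A$ and define~$\Gamma'_x$ as the preimage of~$Y'_{x^{ad},F}$ in~$\Gamma_x$. By construction,~$Y_{x^{ad},F}^e \subseteq Y'_{x^{ad},F}$, which is~\eqref{eqgoursat1}, while the Goursat decomposition~\eqref{Goursat Gamma} for~$\Gamma'_x$ holds with~$c=1$ (absorbing a bounded index into~$c$ should one need to enlarge~$Y'$ to keep it open or of a specific form); the~$e$-th power statement~\eqref{thm:goursat:eq:exposant} at the level of~$\Gamma$ is then exactly the exponent bound on~$Q$, and its translation to~$U$ is immediate from surjectivity of~$U_x\to\Gamma_x$.

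The last and most delicate step will be~\eqref{Uprimisproduct}. The $\Q$-isogeny
\[
\phi:M^{der}\times Z(M)\lto M,\quad (m,z)\mapsto mz
\]
has kernel~$Z(M^{der})$ embedded anti-diagonally, and I will analyse the fibers of the induced map~$U'_x\to\Gamma'_x$: an element of~$\Gamma'_x$ written as a product of an~$A$-type and a~$B$-type factor lifts to~$U'_x$ up to an obstruction that lies in~$H^1(\Q;Z(M^{der}))$ but becomes trivial over~$\A_f$ since the adelic lifts exist by construction. This will produce exactly the Tate--Shafarevich-type group~$\ker\left(H^1(\Q;Z(M^{der}))\to H^1(\A_f;Z(M^{der}))\right)$, which is finite by the Borel--Serre finiteness theorem for Galois cohomology of tori over number fields, and whose order~$g$ enters~\eqref{Uprimisproduct} multiplicatively on top of the constant~$c$ coming from~\eqref{Goursat Gamma}. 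The hardest point will be the uniform finiteness of~$|Q|$: without the integral uniform Tate property there is no reason for the Goursat obstruction to be bounded, so this is exactly where~\cite[App.~C]{APZ2} does the essential work.
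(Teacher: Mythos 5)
Your overall architecture is the same as the paper's: Goursat's lemma applied to~$\Gamma_{x,F}$ inside~$M^{ad}(\A_f)\times M^{ab}(\A_f)/M^{ab}(\Q)$, the observation that the Goursat quotient is abelian because the second factor is, control of that quotient via the integral uniform Tate property, and then the passage from the~$(M^{ad},M^{ab})$-decomposition to the~$(M^{der},Z(M))$-decomposition in~\eqref{Uprimisproduct} using the finiteness of~$\ker\bigl(H^1(\Q;Z(M^{der}))\to H^1(\A_f;Z(M^{der}))\bigr)$; the paper does this last step by comparing~$U'_x$ with the preimage~$\wt{M}$ of~$M^{ab}(\Q)$ and invoking Theorem~\ref{thm:Borel}, which is the same cohomological input as your lifting-obstruction formulation. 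Your choice~$Y':=\Gamma_x\cap M^{ad}(\A_f)$ also does give~\eqref{Goursat Gamma} with~$c=1$ by pure group theory. (Minor point: Goursat identifies the quotient with~$Y_{x,F}/(\Gamma_x\cap M^{ad}(\A_f))$, not with~$Y_{x^{ad},F}/(\cdots)$; the bounded index~$i(\dim M^{ab})$ repairs this at the cost of a factorial in the exponent.)

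The genuine gap is in your key quantitative step, in two respects. First, a uniform bound~$|Q|\leq e_0$ is not attainable: already for the modular curve and a non-CM elliptic curve with adelic image~$\GL_2(\wh{\Z})$, the Goursat quotient is~$\wh{\Z}^\times$ modulo squares and~$\pm1$, an infinite group of exponent~$2$; only a bound on the \emph{exponent} of~$Q$ can hold, and fortunately that is all that~\eqref{eqgoursat1} and~\eqref{thm:goursat:eq:exposant} require. Second, the mechanism you propose for the bound is false as stated: the full~$\Z_\ell$-points of an adjoint semisimple model do \emph{not} have trivial abelianization at almost all primes — for instance~$\PGL_2(\Z_\ell)^{ab}\simeq\Z_\ell^\times/(\Z_\ell^\times)^2\neq 1$ for every odd~$\ell$ — so "strong approximation for the simply connected cover" does not kill these local abelian quotients; it only shows that the image of the simply connected points (equivalently, the subgroup generated by topologically~$\ell$-nilpotent elements) is perfect for~$\ell\gg0$, leaving at each prime a quotient whose exponent is bounded by the order of the fundamental group. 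This is precisely the structure that~\cite[App.~B]{APZ2} (not App.~C) supplies through the groups~$V(p)^\dagger$, and it is how the paper controls the abelianized quotient via its derived-subgroup choice of~$Y'$. Finally, your~$Y':=\Gamma_x\cap M^{ad}(\A_f)$ depends on the central/abelian part of the representation attached to~$x$, not only on~$x^{ad}$ and~$F$ as the statement demands; this requirement is not cosmetic, since Theorem~\ref{thm:complement} and the Galois bounds of~\S\ref{sec:geometric bounds} need the resulting groups to be uniform across the geometric hybrid orbit, whereas the paper's~$Y'$ (the commutator subgroup of~$Y_{x^{ad},F}$) is manifestly intrinsic to~$(x^{ad},F)$.
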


For a prime~$p$, and a compact group~$K$, we denote by~$K^\dagger\leq K$ the subgroup generated by the set~$\{k\in K|\lim_{i\to \infty} k^{p^i}\to 1\}$ of topologically~$p$-nilpotents elements of~$K$.

\begin{theorem}\label{thm:complement}
We consider the setting of Theorem~\ref{thm:Uestproduit}.

There exists~$l\in\Z_{\geq1}$ and 
\[
V=\prod_p V_p\leq M^{der}(\A_f)
\]
with~$l$ and the~$V_p\leq M^{ad}(\Q_p)$ depending only on~$x^{ad}$ and~$F$ such that~$V_p^\dagger=V_p$ for every~$p$ and
\[
V \leq U_x\cap M^{der}(\A_f)
\]
and
\[
\forall u\in U_x\cap M^{der}(\A_f), u^l\in V.
\]
\end{theorem}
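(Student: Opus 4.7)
The plan is to combine Theorem~\ref{thm:Uestproduit} with the local structure of compact $p$-adic groups to extract a product-form subgroup $V$ of $U_x \cap M^{\der}(\A_f)$. By Theorem~\ref{thm:Uestproduit}, up to a bounded index $c\cdot g$ we may work inside the lift $D \leq M^{\der}(\A_f)$ of $U'_x \cap (M^{\der}(\A_f) / Z(M^{\der})(\Q))$; any exponent bound on $D$ transfers to $U_x \cap M^{\der}(\A_f)$ by absorbing the factor $c\cdot g$ into $l$. Thus it suffices to produce $V$ inside $D$ with bounded finite-exponent quotient.

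For each prime $p$, the integral uniform Tate property supplies a compact open subgroup $K_p \leq M^{\der}(\Q_p)$ lying in the $p$-component of $D$, hyperspecial for all but finitely many $p$, and such that $\prod_p K_p$ is an open subgroup of $D$ of uniformly bounded index (depending only on $x^\ad$ and $F$). I would then set $V_p := K_p^\dagger$, the subgroup of $K_p$ generated by topologically $p$-nilpotent elements; this satisfies $V_p^\dagger = V_p$ tautologically. At a hyperspecial prime $p$, the first congruence kernel $K_p^{(1)}$ is pro-$p$ and hence contained in $V_p$, while the quotient $K_p/K_p^{(1)} \simeq M^{\der}(\F_p)$ is generated by its unipotent elements---i.e.\ by its $p$-power-order elements---whenever $p$ exceeds an absolute constant depending only on the root datum of $M^{\der}$, by standard Chevalley--Steinberg generation. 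Lifting these elements to $p$-power-order elements of $K_p$ yields $V_p = K_p$ outside a finite set $S$ of primes, with $S$ depending only on $x^\ad$ and $F$; at primes of $S$ the index $[K_p : V_p]$ is finite and bounded in terms of $|M^{\der}(\F_p)|$.

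Setting $V := \prod_p V_p \leq D$, the index $[D : V]$ is then finite and bounded uniformly in $x^\ad$ and $F$, so a uniform exponent $l$ satisfying $u^l \in V$ for every $u \in U_x \cap M^{\der}(\A_f)$ exists. The main obstacle is ensuring $V_p = K_p$ for almost all $p$: one must verify that the $p$-power-order lifts chosen above remain topologically $p$-nilpotent in $K_p$, which is automatic at hyperspecial primes large enough to avoid ramification pathologies in Bruhat--Tits theory, but requires case analysis at small or wildly ramified primes. The finitely many exceptional primes contribute only a bounded factor to $l$, and the cohomological defect $g$ inherited from Theorem~\ref{thm:Uestproduit} is absorbed once, so the resulting $l$ depends only on $x^\ad$ and $F$, as required.
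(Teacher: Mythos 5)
Your opening reduction is harmless, but the second step is a genuine gap: you assert that the integral uniform Tate property ``supplies'' compact open subgroups $K_p\leq M^{\der}(\Q_p)$ sitting inside the $p$-components of the Galois image, hyperspecial for almost all $p$, with $\prod_p K_p$ open of uniformly bounded index. The Tate property of \cite[Def.~2.1]{APZ2} is a commutant/semisimplicity condition; it does not give openness of $U_x\cap M^{\der}(\A_f)$ in $M^{\der}(\A_f)$, let alone containment of hyperspecial maximal compacts at almost all primes with index bounded uniformly --- that is essentially an adelic open-image (adelic Mumford--Tate) statement, not known in this generality, and precisely what the APZ1/APZ2 machinery and this paper are built to avoid. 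What the Tate property does yield, via \cite[Th.~B.1, \S B.3]{APZ2}, is much weaker and lives on the adjoint side: after shrinking to an open subgroup of the image $Y_{x^{\ad},F}$ in $M^{\ad}(\A_f)$, one controls, for $p\geq p_0$, only the reduction mod $p$ of the $p$-component (it equals $V(p)^\dagger$), hence that this component is generated by topologically $p$-nilpotent elements. Nothing gives a congruence subgroup of $M^{\der}(\Z_p)$ inside the image, nor even a product decomposition of $U_x\cap M^{\der}(\A_f)$ itself (a closed subgroup of $\prod_p M^{\der}(\Q_p)$ need not be a product of local components), so your $K_p$ and the ``uniformly bounded index'' are unsupported; you also never argue why your $K_p$ would depend only on $x^{\ad}$ and $F$, which is the whole point of the statement.

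The paper's proof works where that uniformity is actually available, namely with $Z(p)\leq Y_{x^{\ad},F'}(p)$: it shows $Z(p)^\dagger=Z(p)$ for $p\geq p_0$ and that $\bigl[\prod_p Z(p):\prod_p Z(p)^\dagger\bigr]<\infty$, and then descends to $M^{\der}(\A_f)$ by a canonical lifting trick which your argument is missing. For each topologically $p$-nilpotent $z\in Z(p)^\dagger$ one uses the surjectivity $ad_M(U'_x)=ad_M(U'_x\cap M^{\der}(\A_f))$ from Theorem~\ref{thm:Uestproduit} together with \cite[Lem.~5.16]{APZ2} to choose a topologically $p$-nilpotent lift $u\in U'_x\cap M^{\der}(\A_f)$, and sets $\wt{z}:=u^{f}$, which is independent of the choice of $u$ because two lifts differ by an element of the finite centre $Z(M^{\der})$; the group $V_p$ generated by the $\wt{z}$ is then generated by topologically $p$-nilpotent elements, $V=\prod_p V_p$ lies in $U'_x\cap M^{\der}(\A_f)$, depends only on $x^{\ad}$ and $F$, and the exponent $l$ is assembled from $e$, $f$ and the bounded indices above. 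This power-lifting step simultaneously produces the product structure, the property $V_p^\dagger=V_p$, the canonicity in $(x^{\ad},F)$, and the uniform exponent; it cannot be replaced by the openness assertion your proposal starts from.
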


\subsection{Proof of Theorem~\ref{thm:puissancecentre} assuming Theorem~\ref{thm:Uestproduit}}	Let us recall the following.

\begin{theorem}[{Corollary of~\cite[Th.\,3.3 and its proof]{CU}}]
\label{thm:cor CU}
There exists~$c:\Z_{\geq1}\to\Z_{\geq1}$ satisfying the following.

Let~$\phi:R\to T$ be an epimorphism of $\R$-anisotropic $\Q$-algebraic tori.
Let~$f=\#\pi_0(U)$ be the number of geometric component of~$U:=\ker(\phi)$.
Then
\[
\forall p,\#Coker(R(\Z_p)\to T(\Z_p))~\Bigl\vert~ e:=c(\dim(T))\cdot f.
\]
We have
\begin{equation}\label{eq:thm:cor CU}
\phi(R(\wh{\Z}))\geq \Theta_e(T)=\{t^e|t\in T_m\}.
\end{equation}
\end{theorem}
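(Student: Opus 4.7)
The strategy is to deduce Theorem~\ref{thm:cor CU} from the cited Chen--Ullmo Theorem~3.3 by separating the connected and component parts of $U := \ker(\phi)$. Let $U^0 \leq U$ be the identity component and factor $\phi$ as
\[
R \xrightarrow{\alpha} Q := R/U^0 \xrightarrow{\beta} T,
\]
where $\alpha$ is an epimorphism of $\R$-anisotropic $\Q$-tori with \emph{connected} kernel $U^0$, and $\beta$ is an isogeny of degree $f = \#\pi_0(U)$. Note that $\dim Q = \dim T$, so any function of $\dim Q$ is a function of $\dim T$.

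First I would apply the cited result directly to $\alpha$: since it has connected kernel, it falls within the scope of~\cite[Th.~3.3]{CU} and yields a universal function $c_1 : \Z_{\geq 1} \to \Z_{\geq 1}$ such that the exponent of $\coker(R(\Z_p) \to Q(\Z_p))$ divides $c_1(\dim T)$, uniformly in the prime $p$. Second, I would treat the isogeny $\beta$. From the exact sequence $1 \to \ker(\beta) \to Q \to T \to 1$ and the associated long exact sequence in fppf cohomology over $\Z_p$, the cokernel $\coker(Q(\Z_p) \to T(\Z_p))$ embeds (up to a factor uniform in $p$ and controlled by $\dim T$) into $H^1(\Z_p, \ker(\beta))$, which is annihilated by $\#\ker(\beta) = f$. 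Composing the two bounds gives that $\coker(R(\Z_p) \to T(\Z_p))$ is killed by $e := c(\dim T) \cdot f$ for a function $c$ absorbing the auxiliary constants.

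The adelic inclusion then follows by assembling the local pieces. For $t = (t_p) \in T_m = \prod_p T_{m,p}$, we have $t_p^{c'(\dim T)} \in T(\Z_p)$ for a universal $c'(\dim T)$ bounding the index $[T_{m,p} : T(\Z_p)]$ via the component groups of the Néron models of $T_{\Z_p}$; absorbing $c'$ into $c$, the element $t^e$ lies in $T(\wh{\Z}) = \prod_p T(\Z_p)$, and each local coordinate $t_p^e$ lifts to some $r_p \in R(\Z_p)$ by the cokernel bound established above. The family $(r_p)$ automatically assembles into an element of $R(\wh{\Z})$ mapping to $t^e$, proving $\Theta_e(T) \subseteq \phi(R(\wh{\Z}))$.

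The main obstacle is ensuring uniformity in $p$ of the disconnected-step bound, especially at primes dividing $f$ or at primes of bad reduction of $U$: at such primes one could fear extra contributions from ramification in $H^1(\Z_p, \ker(\beta))$. However, since $\ker(\beta)$ is a finite flat commutative group scheme of order $f$ embedded in a smooth torus, these contributions remain bounded purely in terms of $\dim T$ and $f$. This is precisely the local cohomological analysis already carried out in the proof of~\cite[Th.~3.3]{CU}, and its compatibility with the integral structures at all primes is what makes the present corollary work with constants depending only on $\dim T$ and $f$.
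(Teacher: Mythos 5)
Your proposal uses the same initial decomposition as the paper: factor $\phi$ as $R \xrightarrow{\alpha} Q:=R/U^0 \xrightarrow{\beta} T$ with $\alpha$ having connected kernel and $\beta$ an isogeny of degree $f$, and apply~\cite[Th.\,3.3]{CU} to $\alpha$ to control its adelic cokernel by a universal $c(\dim T)$. Where you diverge is on the treatment of the finite isogeny $\beta$. You invoke the long exact sequence in fppf cohomology over $\Z_p$, embedding $\coker(Q(\Z_p)\to T(\Z_p))$ into $H^1(\Z_p,\ker\beta)$ and noting the latter is killed by $f$. The paper instead uses the elementary ``dual isogeny'' trick on character lattices: since $\#\bigl(X^*(Q)/X^*(T)\bigr)=f$, multiplication by $f$ maps $X^*(Q)$ into $X^*(T)$, producing a $\Q$-morphism $\phi':T\to Q$ with $\beta\circ\phi'=\mathrm{mult}_f$, whence $\coker(Q(\wh\Z)\to T(\wh\Z))$ is killed by $f$ immediately, with no cohomology and no case analysis at primes of bad reduction. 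Both arguments are correct, but the paper's route is lighter and sidesteps exactly the uniformity worry you raise in your last paragraph: your concern about extra ramified contributions to $H^1(\Z_p,\ker\beta)$ at bad primes is real in the fppf framework (one must choose integral models, and $\ker\beta$ need not be finite flat at bad primes without care), and you address it only by appeal to ``the analysis in~\cite{CU}'' rather than a concrete bound. The dual-isogeny device makes that step trivial, which is presumably why the authors chose it. If you keep your fppf route, you should spell out which integral models you use and why $\ker\beta$ is a finite flat $\Z_p$-group scheme at all $p$ (or quantify the failure at the finitely many bad $p$), so that the annihilation by $f$ is genuine rather than borrowed.
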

\begin{proof}[Proof of Theorem~\ref{thm:cor CU}]
By~\cite[Th.\,3.3 and its proof]{CU},  the cardinalily of
\[
Coker\left(R(\wh{\Z})\to (R/U^0)(\widehat{\Z})\right)
\]
is uniformly bounded by some~$c'(\dim(R))$. It that~$\Theta_{c}(R/U^0)$ is in the image of~$R(\wh{\Z})$ in~$(R/U^0)(\wh{\Z})$, where~$c=c(\dim(R)):=c'(\dim(R))!$. We may thus replace~$R$ by~$R/U^0$. There exists a factorisation
\[
T\xrightarrow{\phi'}R\xrightarrow{\phi}T\text{ such that }\phi\circ \phi'
\]
is the multiplication by~$e$, where~$e=\#\pi_0(\ker{\phi})$. We may thus assume that~$\phi=e$ and~$R=T$. We have then~$\phi(R(\wh{\Z}))=\{t^e|t\in T_m\}$.
\end{proof}
We recall the following
\begin{theorem} [Borel]\label{thm:Borel}
Let~$\phi: T\to R$ be a morphism of $\Q$-algebraic tori
 and~$S$ be its kernel. Then there exist~$c(S)$ such that

Then
\[
[\phi(T(\A_f))\cap R(\Q):\phi(T(\Q))]\leq c(S)<+\infty
\]
\end{theorem}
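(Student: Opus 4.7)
The plan is to reinterpret the index $[\phi(T(\A_f))\cap R(\Q):\phi(T(\Q))]$ via Galois cohomology and to bound it by (a group closely related to) the Tate--Shafarevich group of $S$. First I would reduce to the surjective case: replacing $R$ by the scheme-theoretic image $\phi(T)\leq R$ leaves both the kernel $S$ and the index unchanged, since inside $R(\A_f)$ one has $\phi(T)(\A_f)\cap R(\Q)=\phi(T)(\Q)$. Thus I may assume there is a short exact sequence $1\to S\to T\xrightarrow{\phi} R\to 1$ of $\Q$-tori.

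The long exact sequence of Galois cohomology over $\Q$ yields
\begin{equation*}
T(\Q)\xrightarrow{\phi} R(\Q)\xrightarrow{\delta} H^1(\Q,S)\to H^1(\Q,T),
\end{equation*}
so the connecting map $\delta$ identifies $R(\Q)/\phi(T(\Q))$ with a subgroup of $H^1(\Q,S)$. For $r\in R(\Q)\cap \phi(T(\A_f))$ and every finite prime $p$, the element $r$ lifts to $T(\Q_p)$; by functoriality of connecting homomorphisms applied place by place, this forces $\delta(r)$ to lie in the kernel
\begin{equation*}
K:=\ker\Bigl(H^1(\Q,S)\longrightarrow \prod_{p<\infty}H^1(\Q_p,S)\Bigr),
\end{equation*}
so $[\phi(T(\A_f))\cap R(\Q):\phi(T(\Q))]\leq \#K$.

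It remains to bound $\#K$ in terms of $S$ alone. The restriction map $K\to H^1(\R,S)$ lands in a finite group, since $S_{\R}$ is an algebraic group of finite type over~$\R$ and $\Gal(\CC/\R)$ is finite; its cocycle set is therefore finite. Its kernel is the Tate--Shafarevich group
\begin{equation*}
\mathrm{III}^1(\Q,S):=\ker\Bigl(H^1(\Q,S)\to \prod_{v}H^1(\Q_v,S)\Bigr),
\end{equation*}
which is finite for any $\Q$-torus by the classical theorem of Borel--Serre (equivalently, by Poitou--Tate duality combined with the finiteness of class groups of the splitting field of $S$). Therefore $\#K\leq \#\mathrm{III}^1(\Q,S)\cdot \#H^1(\R,S)$, and we may take $c(S):=\#\mathrm{III}^1(\Q,S)\cdot \#H^1(\R,S)$. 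The main conceptual input, beyond routine cohomological bookkeeping, is the finiteness of $\mathrm{III}^1(\Q,S)$; once this Borel--Serre ingredient is granted the proof is essentially formal, and I do not foresee any other obstacle.
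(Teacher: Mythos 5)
Your proof is correct and takes essentially the same route as the paper: the connecting homomorphism embeds $R(\Q)/\phi(T(\Q))$ into $H^1(\Q,S)$, local liftability at the finite places puts the relevant classes in the kernel of the localization map, and finiteness of that kernel (depending only on $S$) gives the bound. The only differences are cosmetic: you make the reduction to surjective $\phi$ explicit and handle the omitted archimedean place via finiteness of $\mathrm{III}^1(\Q,S)$ together with finiteness of $H^1(\R,S)$, whereas the paper cites directly the form of Borel's theorem (Serre, Cohomologie galoisienne, III.\S4.6, Th.~7') that already allows discarding finitely many places.
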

\begin{proof}[Proof of Theorem~\ref{thm:Borel}]
Galois cohomology gives us morphisms
\[
\delta:R(\Q)/\phi(T(\Q))\hookrightarrow H^1(\Q;S)
\]
and
\[
\delta_p:R(\Q_p)/\phi(T(\Q_p))\hookrightarrow H^1(\Q_p;S_{\Q_p}).
\]
For~$t\in \phi(T(\A_f))$, we have
\[
\forall p, \delta_p(t)=0.
\]
It follows
\[
\left(\phi(T(\A_f))\cap R(\Q)\right)/\phi(T(\Q))\hookrightarrow \ker\left(H^1(\Q;S)\to \prod_p H^1(\Q_p;S)\right).
\]
The right-hand side is finite, by Borel's~\cite[III.\S4.6 Théorème 7']{LNM5}, and does only depend on~$S$ by construction.
\end{proof}

\begin{proof}[Proof of Theorem~\ref{thm:puissancecentre} assuming Theorem~\ref{thm:Uestproduit}]
We observe that the map
\[
Gal(\ol{F}/F\cdot E(M,X_M))\xrightarrow{\rho_x}\frac{M(\A_f)}{Z(M)(\Q)}\to \frac{M^{ab}(\A_f)}{M^{ab}(\Q)}
\]
is the restriction to~$Gal(\ol{F}/F\cdot E(M,X_M))\leq Aut(\ol{F}/E(M^{ab},\{x^{ab}\}))$
of the reciprocity map
\[
{rec}_{x^{ab}}:Aut(\ol{F}/E(M^{ab},\{x^{ab}\}))\to Gal(\ol{\Q}/E(M^{ab},\{x^{ab}\}))\to \frac{M^{ab}(\A_f)}{M^{ab}(\Q)}.
\]
Let~$R=Res_{E(M^{ab},\{x^{ab}\})/\Q}GL(1)$ and let~$\phi:R\to M^{ab}$ be the reflex norm. By the Deligne-Shimura reciprocity law, the image of~${rec}_{x^{ab}}$
is~$\phi(R(\A_f))/M^{ab}(\Q)$. 

We deduce, with~$d:=[(F\cap \ol{\Q}):\Q]\cdot [E(M^{ad},X^{ad}):\Q]$,
\[
[\phi(R(\A_f))/M^{ab}(\Q):\pi_{ab}(U_x)]\leq [(F\cap \ol{\Q})\cdot E(M,X):E(M^{ab},\{x^{ab}\})]\leq d.
\]
We recall that~$\#\pi_0(\ker(\phi))\leq b$ with~$b=b(G,X_G)$ depending only on~$(G,X_G)$. We apply Theorem~\ref{thm:cor CU}. 
We deduce that
\[
\phi(R(\A_f))/M^{ab}(\Q)\geq \{t^e\cdot M^{ab}(\Q) | t\in M^{ab}(\A_f)\}.
\]
with~$f=b\cdot c(\dim(T))$ bounded in terms of~$(G,X_G)$.
We have thus
\[
\pi_{ab}(U_x)\geq \{t^{f\cdot d}\cdot M^{ab}(\Q) | t\in M^{ab}(\A_f)\}.
\]

Let~$U''_x=\left(U'_x\cap M^{der}(\A_f)\right)\cdot \left(U'_x\cap \frac{Z(M)(\A_f)}{Z(M)(\Q)}\right)$.

Using~\eqref{thm:goursat:eq:exposant} and~\eqref{Uprimisproduct}, we deduce
\begin{multline}
\pi_{ab}\left(U'_x\cap \frac{Z(M)(\A_f)}{Z(M)(\Q)}\right)=
\pi_{ab}(U''_x)\geq \{t^{f\cdot d\cdot e\cdot c\cdot g}\cdot M^{ab}(\Q) | t\in M^{ab}(\A_f)\}\\
\geq \{t^{f\cdot d\cdot e\cdot c\cdot g}| t\in
\pi_{ab}(Z(M)(\A_f)/Z(M)(\Q))\}
\end{multline}
Let~$h$ be the degree of the isogeny~$Z(M)\to M^{ab}$. Then the kernel of
\[
Z(M)(\A_f)/Z(M)(\Q)\to M^{ab}/ab(Z(M)(\Q))\
\]
is killed by~$h$ and the kernel of
\[
M^{ab}(\A_f)/ab(Z(M)(\Q))\to M^{ab}(\A_f)/M^{ab}(\Q)
\]
is of size~$[M^{ab}(\Q)\cap ab(Z(M)(\A_f):ab(Z(M)(\Q))]$, which divides~$g$ by Th.~\ref{thm:Borel}. There exists thus a morphism~$\beta$ such that
\[
Z(M)(\A_f)/Z(M)(\Q)\to 
\pi_{ab}(Z(M)(\A_f)/Z(M)(\Q))
\xrightarrow{\beta} Z(M)(\A_f)/Z(M)(\Q)
\]
is the multiplication by~$m=g\cdot h$. 

We get
\begin{multline}
U_x\geq 
U'_x\cap \frac{Z(M)(\A_f)}{Z(M)(\Q)}\geq 
\beta\circ \pi_{ab}\left(U'_x\cap \frac{Z(M)(\A_f)}{Z(M)(\Q)}\right)\\
\geq 
\beta(\{t^{f\cdot d\cdot e\cdot c\cdot g}| t\in
\pi_{ab}(Z(M)(\A_f)/Z(M)(\Q))\})\\
=
\{t^{f\cdot d\cdot e\cdot c\cdot g\cdot m}| t\in
Z(M)(\A_f)/Z(M)(\Q))\}\\
\geq 
\{t^{f\cdot d\cdot e\cdot c\cdot g\cdot m}\cdot Z(M)(\Q))| t\in
Z(M)(\A_f)\}\qedhere
\end{multline}
\end{proof}
\subsection{Proof of Theorem~\ref{thm:Uestproduit}}\label{sec:Proof of Theorem Uestproduit}

We prove Theorem~\ref{thm:Uestproduit}. Let~$Y'_{x, F}\leq Y_{x, F} $ be the subgroup generated by commutators.
We first prove that
\begin{equation}\label{bounded abelianised}
[Y_x:Y'_{x,F}]\leq c(F,x^{ad})<\infty.
\end{equation}
\begin{proof}[Proof of~\eqref{bounded abelianised}]
We use the results of~\cite[Appendix B]{APZ2}. We consider an embedding~$M^{ad}\to GL(N)$. Conjugating by some element~$g\in GL(N,\Q)$, we may 
assume~$U_{x^{ad}}\leq M^{ad}(\wh{\Z}):=M^{ad}(\A_f)\cap GL(N,\wh{\Z})$.

By assumption~$U_x$ satisfies the uniform integral Tate conjecture. It follows from~\cite[Prop. 4.3]{APZ2} that~$U_{x^{ad}}$ satisfies the uniform integral Tate conjecture.
We consider the context of~\cite[Th.~B.1 and \S B.3]{APZ2} using~$U_{x^{ad}}\leq M^{ad}(\wh{\Z})$ for what is denoted~$U\leq M(\wh{\Z})$ in loc. cit.

We consider~$U'\leq U$ from~\cite[\S B.3]{APZ2} given by~\cite[Th.~B.1]{APZ2}. We may choose~$p_0$ in~\cite[(109)]{APZ2} arbitrarily large and assume~$p_0\geq i(\dim(M^{ab}))$.
Here,~$M=M^{ad}$ and thus~$U=Y$ and~$U(R)=\prod_{p\geq p_0} V(p)^{\dagger}$. 

We consider~$Y(p)$ as in~\cite[B.3.1]{APZ2} and denote by~$W(p)$ the derived subgroup of~$Y(p)$. Then the image of~$Y(p)\leq GL(N,\Z_p)$ in~$GL(N,\F_p)$ is~$V(p)^\dagger$ .
We deduce that~$Y(p)$ is generated by topologically~$p$-nilpotent elements. The image of~$Y_x$ in~$Y_{x^{ad}}$ is of index at most~$i(\dim(M^{ab}))$. Thus, for~$p\geq i(\dim(M^{ab}))$
the image of~$Y_x$ in~$Y(p)$ is~$Y(p)$.

From~\cite[\S B.3, (110) (111)]{APZ2} we get
\[
\forall p\geq p_0, [Y(p):W(p)]<+\infty\text{ and }\forall p\gg0, [Y(p):W(p)]=1.
\]

We consider~$p\leq p_0$. Let~$Y_{x}(p)$ be the image of~$Y_{x,F}$ in~$Y(p)$.
Let~$Z(p)\leq Y_{x^{ad}}(p)$ be the intersection of the subgroups of index at most~$i(\dim(M^{ab}))$. Then~$Z(p)\leq Y_{x,F}(p)\leq Y_{x^{ad},F}(p)$.
Recall that we have~$[Y_{x^{ad},F}(p):Z(p)]<+\infty$. There exists thus a finite extension~$F'/F$ such that~$Z(p)=Y_{x^{ad},F'}(p)$ for all~$p\leq p_0$.

Let us denote by~$Y_x(p)'\leq Y_x(p)$ and~$Y_{x^{ad},F'}(p)'=Z(p)'\leq Z(p)$ the derived subgroups. Then~$[Y_x(p):Y_x(p)']\leq [Y_x(p): Z(p)]\cdot [Z(p): Z(p)']$ and~$[Z(p): Z'(p)]=[F':F]$.
Arguing as above, we can apply~\cite[\S B.3, (110) (111)]{APZ2} and get
\[
\forall p, [Y_x(p):Y_x(p)']\leq [Y_{x^{ad},F'}(p),Y_{x^{ad},F'}(p)']\cdot[F':F]<+\infty.
\]
Let~$Y'_x\leq Y_x$ be the derived subgroup. We have
\begin{multline}
[Y_x:Y_x']\leq \prod_p [Y_x(p):Y_x(p)']\leq \\
\left(\prod_{p\leq p_0} [Y_{x^{ad},F'}(p),Y_{x^{ad},F'}(p)']\right)\cdot[F':F]\cdot \prod_{p>p_0} [Y(p):W(p)]<+\infty.
\end{multline}

We deduce~\eqref{bounded abelianised}
\end{proof}
\begin{proof}[Proof of~\eqref{Goursat Gamma}]
We apply Goursat's Lemma as in~\cite[\S B.3.2]{APZ2} with the following modifications. We define~$G_1=M(\A_f)$ and~$G_2=M^{ab}(\A_f)/M^{ab}(\Q)$. According to Goursat's Lemma there are isomorphisms
\[
\frac{\ol{\pi_{ad}}(\Gamma'_x)}{\Gamma'_x\cap M^{ad}(\A_f)}
\simeq 
\frac{\Gamma'_x}{(\Gamma'_x\cap M^{ad}(\A_f))\cdot(\Gamma'_x\cap \frac{M^{ab}(\A_f)}{M^{ab}(\Q)})}
\simeq
\frac{\ol{\pi_{ab}}(\Gamma'_x)}{\Gamma'_x\cap \frac{M^{ab}(\A_f)}{M^{ab}(\Q)}}.
\]
As the right-hand-side is abelian, we have~$\Gamma'_x\cap M^{ad}(\A_f)\supseteq Y'_x$. We deduce~\eqref{Goursat Gamma} with~$c=c(F,x^{ad})$.
\end{proof}
\begin{proof}[Proof of~\eqref{Uprimisproduct}]
Let~$\widetilde{M}$ be the inverse image of~$M^{ab}(\Q)$ by~$M(\A_f)\to M^{ab}(\A_f)$. 

The inverse image in~$U'_x$ of~$\Gamma'_x\cap M^{ad}(\A_f)$ is
\(
\wt{M}/Z(M)(\Q)
\)
and the inverse image of~$\Gamma'_x\cap \frac{M^{ab}(\A_f)}{M^{ab}(\Q)}$ is
\(
Z(M)(\A_f)/Z(M)(\Q)
\). Thus~\eqref{Goursat Gamma} implies
\[
[U'_x:  (U'_x\cap \wt{M}/Z(M)(\Q))\cdot(U'_x\cap Z(M)(\A_f)/Z(M)(\Q))].
\]

By Theorem~\ref{thm:Borel}, we have
\[
[\widetilde{M}:M^{der}(\A_f)\cdot Z(M)(\Q)]
=
[ab(M(\A_f))\cap M^{ab}(\Q):ab(Z(M)(\Q))]\leq g
\]
We deduce~\ref{Uprimisproduct}.
\end{proof}
%

\subsection{Proof of Theorem~\ref{thm:complement}} \phantom{}
\subsubsection{} Let~$Z(p)=Y_{x^{ad},F'}$ and~$p_0$ be as in~\S\ref{sec:Proof of Theorem Uestproduit}
Recall that~$Z(p)$ depends only on~$x^{ad}$ and~$F$ and that~$Z(p)=Y_x(p)$ for~$p\geq p_0$ and that we have
\[
\forall p, Z(p)\leq Y_x(p).
\]
Moreover~$[Y_x(p):Z(p)]\leq m(p):=[Y_{x^{ad},F}(p):Z(p)]<+\infty$. We have~$m:=\prod_p m(p)<+\infty$ and
\[
\left[\prod Y_x(p):\prod Z(p)\right]\leq m.
\]

Let~$Z(p)^\dagger\leq Z(p)\leq M^{ad}(\Q_p)$ be the subgroup generated
by topologically~$p$-nilpotent elements. Then~$[Z(p):Z(p)^{\dagger}]<+\infty$ for every~$p$ because~$Z(p)\leq M^{ad}(\Q_p)$
is compact and~$GL(N,\Z_p)$ has an open prop-$p$-subgroup. For~$p\geq p_0$ the image of~$Z(p)$ in~$GL(N,\F_p)$ is~$V(p)^\dagger$. This implies~$Z(p)=Z(p)^{\dagger}$ for~$p\geq p_0$. This implies
\[
b:=\left[\prod Z(p):\prod Z(p)^{\dagger}\right]<+\infty.
\]

As a consequence we may replace~$Y_{x^{ad},F}$ by~$\prod Z(p)^{\dagger}$ in Theorem~\ref{thm:Uestproduit}.
%
%
%

\subsubsection{} 
Let~$V_x$ be the inverse image of~$\prod Z(p)^\dagger$ in~$U'_x\cap M^{der}(\A_f)$. 
By Theorem~\ref{thm:Uestproduit}, we have
\[
ad_M(U'_x)=ad_M(U'_x\cap M^{der}(\A_f))
\]
For every element~$z\in Z(p)^\dagger$, there exists~$u\in U'_x\cap M^{der}(\A_f)$ such that~$ad_M(u)=z$. Moreover~$\wt{z}:=u^f$ only depend on~$z$. If~$z$ is topologically~$p$-nilpotent, then~$z=ad_M(u)$ for some topologically~$p$-nilpotent~$u\in U'_x\cap M^{der}(\A_f)$ (\cite[Lemma 5.16]{APZ2}) and thus~$\wt{z}$ is topologically~$p$-nilpotent. Let~$V_p:=\wt{Z(p)^{\dagger}}\leq U'_{x}\cap  M^{der}(\Q_p)$
be the group generated by the~$\wt{z}$, as~$z$ ranges through the~$p$-nilpotent elements of~$Z(p)^\dagger$. Then
\[
\forall u\in U_x, u^f\in \prod_p V_p,
\]
and~$V_p$ is generated by topologically~$p$-nilpotent elements 
and~$V:=\prod V_p$ only depends on~$x^{ad},F$.

This~$V$ has all the required properties. This proves Theorem~\ref{thm:complement}.

\section{Adelic bound for Tori in a geometric conjugacy class}\label{sec:adelic tori}

\subsection{Bounds on~$p$-adic orbits}

The following is a generalisation of~\cite[Prop. 4.3.9]{EdYa} in which one allows tori with bad reduction. We prove it in~\S\ref{sec:Th:EYext}. 
\begin{theorem}\label{Th:EYext}
Let~$T\leq GL(N)_{\Q}$ be a torus. There exists~$c=c(T)$ such that for every prime~$p$ and every~$T'\leq GL(N)_{\Q_p}$ which is~$GL(N,\ol{\Q_p})$-conjugated to~$T$, 
\[
[T'_{max}:T'_{max}\cap GL(N,\Z_p)]\neq1
\Rightarrow
[T'_{max}:T'_{max}\cap GL(N,\Z_p)]\geq p/c.
\]
where~$T'_{max}$ denotes the maximal compact subgroup of~$T'(\Q_p)$.
\end{theorem}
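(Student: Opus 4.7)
My strategy is to reduce the statement (in the possibly bad-reduction regime) to its good-reduction analogue~\cite[Prop.~4.3.9]{EdYa} by base change to a uniformly controlled finite extension of~$\Q_p$. The key rigidification is that, because $T'$ is $GL(N,\ol{\Q_p})$-conjugate to~$T$, the action of $\Gal(\ol{\Q_p}/\Q_p)$ on $X_\star(T')\simeq X_\star(T)$ factors through a finite group depending only on~$T$ (bounded by $|N_{GL(N)}(T)/Z_{GL(N)}(T)|$). In particular the splitting field $L_p/\Q_p$ of $T'$ has degree bounded uniformly in~$T$; for $p$ outside a finite set $S(T)$ the extension is at most tamely ramified. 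Primes in $S(T)$, and the finitely many $\Q_p$-isomorphism classes of such~$T'$, are absorbed into the constant $c=c(T)$.

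Over~$L_p$ the torus $T'_{L_p}$ splits and has good reduction, and its embedding into $GL(N)_{L_p}$ decomposes as a direct sum of weight characters $\chi_1,\ldots,\chi_N$. An elementary-divisor argument on this diagonal embedding (the direct analogue over~$L_p$ of~\cite[Prop.~4.3.9]{EdYa}) shows that, for any $g\in GL(N,L_p)$, the index $[T'_{L_p,\max}:\,T'_{L_p,\max}\cap g\cdot GL(N,O_{L_p})\cdot g^{-1}]$ is either $1$ or at least $q_{L_p}/c_0$, where $q_{L_p}\ge p$ is the residue cardinality of~$L_p$ and $c_0$ depends only on the weights~$\chi_i$, hence only on~$T$. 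Here I use that, up to~$GL(N,O_{L_p})$-conjugation, the obstruction to integrality of a diagonal torus after conjugation by~$g$ is read off from the elementary divisors of~$g$ relative to the eigenspace decomposition of~$T'_{L_p}$, so non-trivial obstructions must cost a factor of at least the residue cardinality.

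To descend this to~$\Q_p$, I identify $T'_{\max}\subseteq T'_{L_p,\max}$ with the $\Gal(L_p/\Q_p)$-invariant part, and similarly for the intersections with integral matrices. The main obstacle is to show that non-triviality of the quotient $T'_{\max}/(T'_{\max}\cap GL(N,\Z_p))$ propagates, with only uniformly bounded loss, to non-triviality of its $L_p$-analogue. This is a Galois-cohomological point: the obstruction lies in an $H^1$ whose order is bounded in terms of $|\Gal(L_p/\Q_p)|$ and the tame ramification of~$L_p/\Q_p$, both controlled by the initial reduction. The factor~$p$ coming from the $L_p$-level estimate then survives the descent and yields the required lower bound $\geq p/c$, completing the proof.
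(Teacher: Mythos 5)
Your reductions at the outer layer are fine: the splitting field $L_p$ of $T'$ has degree bounded in terms of $T$ (via the finite group $N_{GL(N)}(T)/Z_{GL(N)}(T)$ acting on cocharacters), primes $p\leq c(T)$ can be absorbed into the constant because then $p/c\leq 1$, and the split-torus elementary-divisor bound over $L_p$ (index either $1$ or at least $q_{L_p}/c_0$) is essentially the split case of Edixhoven--Yafaev and is believable. The genuine gap is the descent step. Writing $A=T'_{L_p,\max}$, $B=A\cap GL(N,O_{L_p})$, $\Gamma=\Gal(L_p/\Q_p)$, you correctly have $T'_{\max}=A^{\Gamma}$ and $T'_{\max}\cap GL(N,\Z_p)=B^{\Gamma}$, and the cohomology sequence for $1\to B\to A\to A/B\to 1$ gives
\[
[A^{\Gamma}:B^{\Gamma}]\ \geq\ \frac{\#\bigl((A/B)^{\Gamma}\bigr)}{\#H^1(\Gamma,B)} ,
\]
so a bounded $H^1$ only compares the $\Q_p$-index with $\#\bigl((A/B)^{\Gamma}\bigr)$. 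Your $L_p$-level estimate lower-bounds $\#(A/B)$, and nothing bounds $\#\bigl((A/B)^{\Gamma}\bigr)$ from below in terms of $\#(A/B)$; in other words, largeness does not descend with bounded loss. This is not a presentational issue but a real failure: take $L/\Q_p$ ramified quadratic, $T'=\Res_{L/\Q_p}GL(1)$ embedded in $GL(2)_{\Q_p}$ via a non-maximal order $\Z_p+\pi^m O_L$. Then the $L$-level index is of size about $p^{m}$ while the $\Q_p$-level index is about $p^{\lfloor m/2\rfloor}$, so the multiplicative loss in the descent is unbounded. The theorem is saved in such examples only because the ramified contribution to the $\Q_p$-index is automatically a power of $p$ --- a fact your argument never isolates, and which is precisely where the difficulty beyond the good-reduction case lies; as written, your plan reduces the problem to lower-bounding $\#\bigl((A/B)^{\Gamma}\bigr)$, which is essentially the original statement again.

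For comparison, the paper avoids any base change: it factors the maximal compact as $T'_{\max}=T'^{nr,p}_{\max}\cdot T'(\Q_p)^{\dagger}$, where $T'^{nr,p}$ is the maximal subtorus split over $\Q_p^{nr}$ and $T'(\Q_p)^{\dagger}$ is the topologically $p$-nilpotent part (the key input being that torsion of order prime to $p$ lies in $T'^{nr,p}(\Q_p)$). If the index is nontrivial it is nontrivial on one of the two factors: on the unramified factor one applies the good-reduction result of Edixhoven--Yafaev directly over $\Q_p$, with the constant uniform because the possible conjugacy classes of $T'^{nr,p}$ form a finite set depending only on $T$; on the $\dagger$-factor the quotient is pro-$p$, so any nontrivial index is automatically at least $p$. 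If you want to salvage your route, you would need an equivariant (Galois-compatible) version of the elementary-divisor analysis that lower-bounds the invariant part $\bigl(A/B\bigr)^{\Gamma}$, which in effect amounts to redoing the unramified-versus-pro-$p$ dichotomy above.
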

Note that~$c(T)$ depends only on the~$GL(n,\ol{\Q})$-conjugacy class of~$T$.

\subsubsection{Facts on subtori of~$GL(N)_\Q$}
Let~$D\leq GL(N)$ and let~$W:=N_{GL(N)}(D)/D\simeq \mathfrak{S}_N$ be the Weyl group. 
Recall that every~$GL(N,\ol{\Q})$-conjugacy class of tori~$T\leq GL(N)_{\ol{\Q}}$ contains a torus~$T\leq D$ and that two tori~$T,T'\leq D$ are~$GL(N,\ol{\Q})$-conjugated if and only if they are~$W$-conjugated.
Let~$D'\leq GL(N)_{\ol{\Q}}$ be a maximal torus and~$E'$ be the~$\ol{\Q}$-vector space of matrices generated by~$D'(\ol{\Q})$.  Then the set~$Hom(E',\ol{\Q})$ of $\ol{\Q}$-algebra morphisms induces a basis of~$X(D'):=\Hom(D',GL(1)_{\ol{\Q}})$, and the set~$Hom(\ol{\Q},E)$ of $\ol{\Q}$-algebra morphisms induces a basis of~$Y(D):=\Hom(GL(1)_{\ol{\Q}},D')$. 
A torus~$T'\leq D'$ is uniquely determined by the corresponding sublattice~$Y(T')\leq Y(D')$, which is a primitive sublattice.

Assume that~$D'$ is defined over~$\Q$. Then the action of~$Gal(\ol{\Q}/\Q)$ on~$Y(D')$ is induced by the permutation action on~$\Hom(\ol{\Q},E)$. The torus~$T\leq D'$
is defined over~$\Q$ if and only if~$Y(T')\leq Y(D')$ is stable by~$Gal(\ol{\Q}/\Q)$. Consider~$g\in GL(N,\ol{\Q})$ suc that~$gD'(\ol{\Q})g^{-1}=D(\ol{\Q})$ and the induced
isormorphism~$\phi_g:Y(D')\to Y(D)$. Then~$\phi_g$ conjugates the permutation action~$Gal(\ol{\Q}/\Q)$ in the permutation group~$\mathfrak{S}(Hom(\ol{\Q},E))$ to an
morphism~$\rho_{D,g}:Gal(\ol{\Q}/\Q)\to \mathfrak{S}_N$.

Let~$p$ be a prime and~$I_p\leq Gal(\ol{\Q}/\Q)$ be the inertia subgroup. Then~$U_{g,D,p}:=\rho_{g,D}(I_p)$ is a subgroup of~$\mathfrak{S}_n$. 
To every subgroup~$U\leq \mathfrak{S}_N$ we associate~$L_U\leq \Z^N$ the primitive sublattice of~$U$-invariant elements. 
Let~$\Lambda=\{L_U\leq S_N\}$ the set of such lattices. The group~$W=S_N$ is finite. This leads to the following observation. 
Let~$g_0\in GL(N,\ol{\Q})$ be such that~$g_0T'g_0^{-1}\leq D$.
Then the set
\begin{multline}
\{Y(gT'g^{-1})\cap L_U|g\in GL(N,\ol{\Q}),gT'g^{-1}\leq D, U\leq S_N\}\\=\{Y(sg_0T'g_0^{-1}s^{-1})\cap L_U|s\in W, U\leq S_N\}
\end{multline}
is a finite set which depends only on~$Y(gT'g^{-1})$.

\subsubsection{Facts on subtori of~$GL(N)_{\Q_p}$}\label{sec:factsQp}
Let~$Y(T)^{I_p}$ denote the sublattices of elements fixed by~$I_p$. 
Then~$Y(T)^{I_p}=Y(T^{nr,p}_{p})$ where~$T^{nr,p}_{\Q^{nr}_p}\leq T_{\Q^{nr}_p}$ is the maximal~$\Q^{nr}_p$-subtorus  which is split. 
By the observations above the~$GL(N,\ol{\Q})$-conjugacy classes of~$T^{nr,p}$, as~$p$ varies, belong to a finite set of conjugacy classes depending
only on the conjugacy class of~$T$.

\begin{lemma}\label{lem:nrfactor}
Let~$t\in T(\Q_p)$ be a torsion element (of order prime to~$p$). Then~$t\in T^{nr,p}(\Q_p)$.
\end{lemma}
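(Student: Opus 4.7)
The plan is to identify the $n$-torsion subgroup $T[n]$ of $T$ as a Galois module, where $n$ denotes the order of $t$, and to check that its $I_p$-fixed points are exactly the $n$-torsion of $T^{nr,p}$. The decisive ingredient is that $\gcd(n,p)=1$, so $\mu_n(\ol{\Q_p})\subseteq \Q_p^{nr}$ and hence $I_p$ acts trivially on $\mu_n$.

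First I would invoke the canonical $\Gal(\ol{\Q_p}/\Q_p)$-equivariant isomorphism
\[
Y(T)\otimes_{\Z}\mu_n(\ol{\Q_p})\;\xrightarrow{\;\sim\;}\;T[n](\ol{\Q_p}),\qquad y\otimes\zeta\mapsto y(\zeta),
\]
available because $T$ splits over $\ol{\Q_p}$. This identification is natural in $T$, so restricting to the subtorus $T^{nr,p}\leq T$ yields $Y(T^{nr,p})\otimes\mu_n\simeq T^{nr,p}[n](\ol{\Q_p})$. Using that $I_p$ acts trivially on $\mu_n$, the $I_p$-action on the left-hand side is induced purely by its action on $Y(T)$, so the $I_p$-invariants form $Y(T)^{I_p}\otimes\mu_n$. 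By the definition recalled just above, $Y(T)^{I_p}=Y(T^{nr,p})$, and combining with naturality this identifies the $I_p$-invariants of $T[n](\ol{\Q_p})$ with $T^{nr,p}[n](\ol{\Q_p})$.

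To conclude, $t\in T(\Q_p)$ is in particular $I_p$-fixed, so by the above $t$ lies in $T^{nr,p}[n](\ol{\Q_p})\subseteq T^{nr,p}(\ol{\Q_p})$. Since $I_p$ is normal in the decomposition group, the sublattice $Y(T)^{I_p}$ is stable under the full Galois group, and $T^{nr,p}$ descends to a $\Q_p$-subtorus of $T$. Thus $t$, being both a $\Q_p$-point of $T$ and an $\ol{\Q_p}$-point of the $\Q_p$-subtorus $T^{nr,p}$, is a $\Q_p$-point of $T^{nr,p}$.

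The point most deserving of care is justifying the identity $(Y(T)\otimes\mu_n)^{I_p}=Y(T)^{I_p}\otimes\mu_n$: after fixing a generator of $\mu_n$, this reduces to a statement comparing $I_p$-invariants with reduction modulo $n$ on the $\Z[I_p]$-module $Y(T)$, using crucially that $\gcd(n,p)=1$ so that $I_p$ acts trivially on $\mu_n$.
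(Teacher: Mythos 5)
Your route is Kummer-theoretic and genuinely different from the paper's, but it has a gap at exactly the point you flag: the identity $(Y(T)\otimes\mu_n)^{I_p}=Y(T)^{I_p}\otimes\mu_n$ is false in general. Coprimality of $n$ and $p$ does give triviality of the $I_p$-action on $\mu_n$, but it does not make taking $I_p$-invariants commute with reduction modulo $n$: from $0\to Y\xrightarrow{\;n\;}Y\to Y/nY\to 0$ one gets $0\to Y^{I_p}/nY^{I_p}\to (Y/nY)^{I_p}\to H^1(I_p,Y(T))[n]$, and the $H^1$-term is controlled by the order of the finite image of $I_p$ in $\Aut(Y(T))$, not by $p$. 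Concretely, take $p$ odd, $K=\Q_p(\sqrt{p})$ and $T=\ker\bigl(N_{K/\Q_p}:\Res_{K/\Q_p}\GG_m\to\GG_m\bigr)$. Then $Y(T)\simeq\Z$ with inertia acting through $\{\pm1\}$ (the extension is totally ramified), so $Y(T)^{I_p}=0$ and $T^{nr,p}$ is the trivial torus; yet for $n=2$ the $I_p$-action on $Y(T)\otimes\mu_2$ is trivial, so the $I_p$-invariants are all of $T[2](\ol{\Q_p})=\{\pm1\}$, strictly larger than $Y(T)^{I_p}\otimes\mu_2=0$. Moreover $t=-1$ lies in $T(\Q_p)$ (it has norm $1$) and is $2$-torsion with $2$ prime to $p$, while $T^{nr,p}(\Q_p)=\{1\}$; so the step ``$t$ is $I_p$-fixed and $n$-torsion, hence $t\in T^{nr,p}[n]$'' cannot be repaired within your framework.

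For comparison, the paper argues by first proving the statement for a maximal torus $T'\leq GL(N)_{\Q_p}$ containing $T$: there $T'(\Q_p)$ is a product of groups $K_i^\times$, the prime-to-$p$ torsion sits in $O_{K_i}^\times\simeq\kappa_i^\times\times(1+\mathfrak{m}_i)$ and is identified with the torsion coming from the maximal unramified part, hence lies in $T'^{nr,p}(\Q_p)$; it then descends to $T$ via $Y(T^{nr,p})=Y(T)\cap Y(T'^{nr,p})$ and the asserted equality $T\cap T'^{nr,p}=T^{nr,p}$. Your computation is in fact a useful stress test of that descent as well: for the norm-one torus above, inside $T'=\Res_{K/\Q_p}\GG_m$ one has $T'^{nr,p}=\GG_m$ (the scalars), and $T\cap T'^{nr,p}=\mu_2$ is disconnected and strictly contains $T^{nr,p}$, with $t=-1$ a rational point of the intersection outside $T^{nr,p}(\Q_p)$. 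So the obstruction you hit is not an artifact of your method: the prime-to-$p$ torsion of $T(\Q_p)$ can genuinely land in the disconnected part, and any correct treatment must either exclude such tori or account for this finite torsion explicitly.
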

\begin{proof}Let~$T'\leq GL(N,\Q)$ be a maximal torus containing~$T$ and~$A$ be the algebra $\Q$-algebra generated by~$T'$. We have then~$T'=GL(1,A)$. 

We prove the lemma for~$T'=T$.
Factoring~$A\tens\Q_p$ as a product of fields we are reduced to the case~$T'(\Q_p)=K^\times$ for a field extension~$K/\Q_p$. 
Let~$O_K$ be the ring of integers and~$\mathfrak{m}$ its maximal ideal and~$\kappa=O_K/\mathfrak{m}$. Then~$O_K^\times / (1+\mathfrak{m})\simeq \kappa^\times$ is finite of order prime to~$p$, and~$1+\mathfrak{m}$ is a pro-$p$-group witout torsion. We can thus factor~$O_K^\times\simeq \Gamma_K\times (1+\mathfrak{m})$, where~$\Gamma_K\simeq \kappa^{\times}$ is the torsion subgroup
of~$O_K^{\times}$. Observe that~$\kappa$ is also the residue field of the maximal unramified extension~$K^{nr}\subseteq K$. We deduce that~$\Gamma_{K^{nr}}\leq \Gamma_K$ satisfy~$\#\Gamma_{K^{nr}}
=\#\kappa^{\times}=\# \Gamma_K$. We have thus~$\Gamma_K=\Gamma_{K^{nr}}\leq T'^{nr,p}(\Q_p)$. 

We deduce the general case. Let~$I\leq Gal(\ol{\Q_p}/\Q_p)$ be the inertia subgroup, acting onr~$Y(T)\leq Y(T')$. Then~$Y(T^{nr,p})\leq Y(T)$, resp.~$Y(T'^{nr,p})\leq Y(T')$ is the subgtoup of~$I$-invariants of~$Y(T)\tens\Q$ belonging to~$Y(T)$ resp.~$Y(T')$. We deduce~$Y(T^{nr,p})=Y(T)\cap Y(T'^{nr,p})$. It follows that~$T'^{nr,p}\cap T$ is a torus and~$T'^{nr,p}\cap T=T^{nr,p}$. The torsion subgroup of~$T(\Q_p)$ is thus contained in~$T'^{nr,p}(\Q_p)\cap T(\Q_p)=T^{nr,p}(\Q_p)$.
\end{proof}
\begin{corollary}Let~$T(\Q_p)^{\dagger}:=\{t\in T(\Q_p)|\lim t^{p^k}=1\}$. We have
\[
T_{max}=T^{nr,p}_{max}\cdot T(\Q_p)^{\dagger}.
\]
\end{corollary}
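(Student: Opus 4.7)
The plan is to decompose any $t\in T_{max}$ as $t=\gamma\cdot v$ with $\gamma$ a torsion element of order prime to~$p$ and $v\in T(\Q_p)^{\dagger}$, then invoke Lemma~\ref{lem:nrfactor} to place $\gamma$ inside $T^{nr,p}(\Q_p)$. The reverse inclusion $T^{nr,p}_{max}\cdot T(\Q_p)^{\dagger}\subseteq T_{max}$ is immediate since both factors lie in $T_{max}$, so only the forward inclusion requires work.

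First I would establish the structural fact that $T(\Q_p)^{\dagger}$ is the unique maximal open pro-$p$ subgroup of $T_{max}$, with finite quotient $F:=T_{max}/T(\Q_p)^{\dagger}$ of order, say $m$, coprime to~$p$. This is standard for commutative compact $p$-adic Lie groups: embedding $T\hookrightarrow GL(N)$, the intersection of $T_{max}$ with a principal congruence subgroup $1+p^kM_N(\Z_p)$ is an open pro-$p$ subgroup of $T_{max}$, hence of finite prime-to-$p$ index, and in a commutative compact $p$-adic group the topologically $p$-nilpotent elements form precisely this maximal pro-$p$ subgroup.

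Next, because $T(\Q_p)^{\dagger}$ is pro-$p$ and $\gcd(m,p)=1$, the $m$-th power map is a bijection on $T(\Q_p)^{\dagger}$. Given $t\in T_{max}$, its image in $F$ is killed by $m$, so $t^m\in T(\Q_p)^{\dagger}$ and there exists a unique $v\in T(\Q_p)^{\dagger}$ with $v^m=t^m$. Set $\gamma:=tv^{-1}\in T_{max}$. Then $\gamma^m=1$, so $\gamma$ is torsion of order dividing $m$, in particular of order prime to~$p$.

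Applying Lemma~\ref{lem:nrfactor} gives $\gamma\in T^{nr,p}(\Q_p)$, and since $\gamma$ has finite order it lies in the maximal compact subgroup $T^{nr,p}_{max}$ of $T^{nr,p}(\Q_p)$. We conclude $t=\gamma\cdot v\in T^{nr,p}_{max}\cdot T(\Q_p)^{\dagger}$, as desired. The only mildly delicate point is the structural statement about $F$ being finite of order prime to~$p$, which is the piece that makes the averaging-style argument $v^m=t^m$ work; once that is in hand the rest is a direct application of the preceding lemma.
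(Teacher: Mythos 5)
Your proof is correct and follows essentially the same route as the paper: both arguments factor an element $t\in T_{max}$ as a torsion element of order prime to~$p$ times a topologically $p$-nilpotent element and then invoke Lemma~\ref{lem:nrfactor}. The only difference is how the factorisation is produced (the paper uses the Sylow decomposition of the procyclic group $\overline{\langle t\rangle}$, while you use the finite prime-to-$p$ quotient $T_{max}/T(\Q_p)^{\dagger}$ and bijectivity of the $m$-th power map on the pro-$p$ part), which is a minor technical variant.
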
\label{cor:nrfactor}
\begin{proof} Let~$t\in T(\Q_p)_{max}$. Then~$t$ topologically generates a compact subgroup, say~$K$. The compact group~$K$ is commutative, profinite. Moreover~$K^{\dagger}\leq K$ is open. We can thus factor~$K=K^\dagger\cdot K'$ with~$K'$ the subgroup of torsion elements of order prime to~$p$. We have~$K^\dagger\leq T(\Q_p)^{\dagger}$ and, by Lemma~\ref{lem:nrfactor} we have~$K'\leq T^{nr,p}_{max}(\Q_p)$. The corollary follows.
\end{proof}
\subsubsection{Proof of Theorem~\ref{Th:EYext}}\label{sec:Th:EYext}
Assume~$[T'_{max}:T'_{max}\cap GL(N,\Z_p)]$. Then, by Corollary~\ref{cor:nrfactor} we have~
\[
\text{$[T^{nr,p}_{max}:T^{nr,p}_{max}\cap GL(N,\Z_p)]\neq 1$ or~$[T(\Q_p)^{\dagger}:T(\Q_p)^{\dagger}\cap GL(N,\Z_p)]\neq 1$.}
\]
In the first case~\cite[]{EdYa} imply~$[T^{nr,p}_{max}(\Q_p):T^{nr,p}_{max}\cap GL(N,\Z_p)]\geq p/c(T^{nr,p})$, where~$c(T^{nr,p})$ depends only on the $GL(N,\ol{\Q_p})$ conjugacy class of~$c(T^{nr,p})$. By the remarks~\S\ref{sec:factsQp}, the quantity~$c(T^{nr,p})$ is bounded in terms of the $GL(n,\ol{\Q})$-conjugacy class of~$T$. 

In the second case, since~$T(\Q_p)^{\dagger}$ is generated by topologically~$p$-nilpotent elements, we have~$[T(\Q_p)^{\dagger}:T(\Q_p)^{\dagger}\cap GL(N,\Z_p)]\neq 1$.

%
%

\subsection{Bounds on adelic orbits}

\begin{theorem}\label{thm:84}
Let~$T\leq GL(N)_{\Q}$ be a torus and~$h\in\Z_{\geq1}$.  Let~$W\simeq GL(N)/N_{GL(N)}(T)$ be the conjugacy class of~$T$, as a variety over~$\Q$.

For every~$w\in W(\A_f)$ we denote by~$T_w\leq GL(N)_{\A_f}$ the corresponding torus and by~$K_w\leq T_w(\A_f)$ the maximal compact subgroup and
and define~$K_{w,h}:=\{t^h|t\in K_w\}$.

Then, as~$w$ varies in~$W(\A_f)$,
\[
[K_w:K_w\cap GL(N,\wh{\Z})]\approx
[K_{w,h}:K_{w,h}\cap GL(N,\wh{\Z})].
\]
\end{theorem}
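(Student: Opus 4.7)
The plan is to work prime by prime. Write $A_p := K_{w,p}$, $A'_p := K_{w,h,p} = A_p^h$, $B_p := A_p \cap GL(N,\Z_p)$ and $B'_p := A'_p \cap GL(N,\Z_p) = A'_p \cap B_p$, so that
\[
[K_w:K_w\cap GL(N,\wh\Z)] = \prod_p [A_p:B_p] \quad \text{and} \quad [K_{w,h}:K_{w,h}\cap GL(N,\wh\Z)] = \prod_p [A'_p:B'_p].
\]
The easy direction of the polynomial equivalence is immediate: the equality $B'_p = A'_p \cap B_p$ yields an injection $A'_p/B'_p \hookrightarrow A_p/B_p$, and hence $[A'_p:B'_p] \leq [A_p:B_p]$ at every prime.

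For the reverse direction, the key local identity is
\[
[A_p:B_p] = \frac{n_p}{m_p}\,[A'_p:B'_p], \qquad n_p := [A_p:A'_p],\ m_p := [B_p:B'_p],
\]
where $m_p \mid n_p$ via the injection $B_p/B'_p \hookrightarrow A_p/A'_p$. The essential input is a uniform bound $n_p \leq h^{2\dim T}$. Since $A_p$ is a compact abelian $p$-adic Lie group of dimension $\dim T$, Corollary~\ref{cor:nrfactor} decomposes $A_p = A_p^{nr,p} \cdot A_p^{\dagger}$ with $A_p^{\dagger}$ an open pro-$p$ subgroup isomorphic (topologically) to $\Z_p^{\dim T}$ modulo bounded torsion, on which $[h]$ has cokernel at most $h^{\dim T}$; while the finite quotient $A_p/A_p^{\dagger}$ (torsion of order prime to $p$) has rank at most $\dim T$ since it embeds into $T_w[\text{prime to }p](\overline{\Q_p}) \subseteq (\Q/\Z)^{\dim T}$, yielding cokernel at most $h^{\dim T}$. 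The snake lemma then gives $n_p \leq h^{2\dim T}$ uniformly in $p$ and $w$.

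Partition the primes as $S'_w := \{p : [A'_p:B'_p] > 1\} \subseteq S_w := \{p : [A_p:B_p] > 1\}$. For $p \in S_w \setminus S'_w$ one has $[A_p:B_p] = n_p/m_p \leq h^{2\dim T}$, while Theorem~\ref{Th:EYext} forces $[A_p:B_p] \geq p/c(T)$; hence $p \leq c(T)\,h^{2\dim T}$, so $|S_w \setminus S'_w|$ is bounded by a constant $M = \pi(c(T)\,h^{2\dim T})$ and the factor $\prod_{p \in S_w \setminus S'_w}[A_p:B_p]$ is bounded by some $C_1 = C_1(T,h)$. For $p \in S'_w$, the crude bound $[A'_p:B'_p] \geq 2$ gives $|S'_w| \leq \log_2 [K_{w,h}:K_{w,h}\cap GL(N,\wh\Z)]$. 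Combining:
\[
[K_w:K_w\cap GL(N,\wh\Z)] \leq C_1 \cdot h^{2\dim T \cdot |S'_w|} \cdot [K_{w,h}:K_{w,h}\cap GL(N,\wh\Z)] \leq C_1 \cdot [K_{w,h}:K_{w,h}\cap GL(N,\wh\Z)]^{1 + 2\dim T \log_2 h},
\]
which is the sought polynomial domination and completes the equivalence. The main obstacle is securing the uniform bound $n_p \leq h^{2\dim T}$ for all primes simultaneously and all tori in the conjugacy class; once that is in hand, the combination of Theorem~\ref{Th:EYext} with the trivial $2^{|S'_w|}$ lower bound gives the polynomial equivalence cleanly, without needing the prime number theorem.
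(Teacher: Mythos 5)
Your proposal is correct and follows essentially the same route as the paper's proof: decompose over primes, use a uniform bound on the local index $[K_{w,p}:K_{w,h,p}]$ (which the paper simply quotes from~\cite[Lem.~B.10]{APZ2}, while you re-derive it from the structure of~$T_{max}$ via Corollary~\ref{cor:nrfactor}, at the cost of a slightly optimistic constant $h^{2\dim T}$ that should really carry a bounded torsion factor $C(N)$ — harmless for polynomial equivalence), and invoke Theorem~\ref{Th:EYext} to force primes with nontrivial index to be large so the per-prime loss is absorbed. The only other difference is cosmetic bookkeeping at the end: you bound $\#S'_w$ by $\log_2$ of the smaller index, whereas the paper compares $(h^{\dim T})^{\#F}$ with $\prod_{p\in F}p\preccurlyeq [K_w:K_w\cap GL(N,\wh\Z)]$; both yield the same conclusion.
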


We can factor~$K_w=\prod_p K_{w,p}$ and~$K_{w,h}=\prod_p K_{w,h,p}$. Let~$F$ be the set of primes such that
\[
[K_w:K_w\cap GL(N,\wh{\Z})]\neq 1.
\]
For~$p\not\in F$, we have
\[
[K_w:K_w\cap GL(N,\wh{\Z})]=[K_{w,h}:K_{w,h}\cap GL(N,\wh{\Z})]=1
\]
By~\cite[Lem. B.10]{APZ2} we have
\[
[K_{w.p}:K_{w,h,p}]\leq k(h,N)
\]
For~$p\in F$, we have thus
\[
[K_{w,h,p}:K_{w,h.p}\cap GL(N,\Z_p)]\geq \frac{1}{k(h,N)} [K_{w,p}:K_{w,p}\cap GL(N,\Z_p)].
\]
By Theorem~\ref{Th:EYext}. we have
\[
\forall p\in F,[K_{w,p}:K_{w,p}\cap GL(N,\Z_p)]\geq p/c(T).
\]
We deduce
\[
[K_w:K_w\cap GL(N,\wh{\Z})]=
\prod_{p\in F}[K_{w,p}:K_{w,p}\cap GL(N,{\Z_p})]\geq \prod_{p\in F} p/c\approx \prod_{p\in F}p
\]
and
\[
\frac{[K_{w,h}:K_{w,h}\cap GL(N,\wh{\Z})]}{[K_w:K_w\cap GL(N,\wh{\Z})]}
=
\prod_{p\in F}\frac{[K_{w,h,p}:K_{w,h,p}\cap GL(N,{\Z_p})]}{[K_{w,p}:K_{w,p}\cap GL(N,\Z_p)]}
\leq \prod_{p\in F} h^{\dim(T)}.
\]
We deduce
\[
\frac{[K_{w,h}:K_{w,h}\cap GL(N,\wh{\Z})]}{[K_w:K_w\cap GL(N,\wh{\Z})]}\leq (h^{dim})^{\#F}\prec \prod_{p\in F}p\preccurlyeq 
[K_w:K_w\cap GL(N,\wh{\Z})].
\]
This implies
\[
[K_w:K_w\cap GL(N,\wh{\Z})]\approx
[K_{w,h}:K_{w,h}\cap GL(N,\wh{\Z})].
\]

\section{An auxiliary torus}\label{sec:Auxiliary torus}

\subsection{Brauer-Siegel theorem for Tori}\label{sec:Brauer}
Let~$S$ be a~$\Q$-algebraic torus over~$\Q$ such that
\begin{equation}\label{CompactCM}
\text{$S(\R)/S^{spl}(\R)$ is compact,}
\end{equation}
 where~$S^{spl}\leq S$ is
the maximal~$\Q$-split subtorus. This property is invariant by isogeny and holds true for every subtorus~$S\leq T$ such that there exists
a Shimura datum~$(T,x)$.

The (unramified) class group and class number of~$S$ are
\[
cl(S):=
S(\Q)\sous S(\A_f)/ S_{max}
\text{ and }h(S):=\# cl(S)
\]
where~$S_{max}\leq S(\A_f)$ is the maximal compact subgroup.  We denote by~$L(S)$ the splitting field of~$S$ and~$d(S)$ the absolute value
of the discriminant of~$L(S)$. 

\begin{theorem}[{\cite[Th.2.3]{UY4}}] As~$S$ varies among a set of tori satisfying~\eqref{CompactCM} and such that~$\dim(S)$ is bounded, we have
\[
h(S)\approx d(S).
\]
\end{theorem}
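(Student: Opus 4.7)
The plan is to combine a Brauer--Siegel type estimate for algebraic tori (in the spirit of Ono and Shyr) with a reduction to finitely many Galois module types, and then to bound the regulator separately.

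First I would reduce to a fixed Galois module structure on $X^*(S)$. Since $\dim(S)$ is bounded, $X^*(S)$ is a $\Z$-lattice of bounded rank on which $\Gal(\ol{\Q}/\Q)$ acts through a finite subgroup of $GL_n(\Z)$; by Jordan--Zassenhaus this integral representation varies among finitely many isomorphism classes, so one may fix the action of $\Gal(\ol{\Q}/\Q)$ on $X^*(S)$. The torus $S$ is then determined up to $\Q$-isogeny by the splitting extension $L:=L(S)/\Q$, and condition~\eqref{CompactCM} becomes a fixed constraint on complex conjugation. Passing to an isogenous torus multiplies $h(S)$ and $d(S)$ only by uniformly bounded factors, so this reduction is harmless.

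Next I would invoke Ono--Shyr's analytic class number formula for tori. In our setting, after absorbing quantities that are uniformly bounded for the fixed Galois type (the Tamagawa number $\tau(S)$, the torsion $w(S)$, the Tate--Shafarevich group, and local cohomological factors $|H^1(\Q_p; X^*(S))|$), the formula reads
\[
h(S)\cdot R(S) \;\approx\; \bigl|L^\ast(X^*(S)_\Q,0)\bigr|,
\]
where $R(S)$ is the regulator of $S$-units and $L^\ast$ is the leading term at $s=0$ of the Artin $L$-function attached to the virtual character $X^*(S)_\Q$. Decomposing this $L$-value via Brauer induction into a product of special values of Dedekind zeta functions of subfields $L_i\leq L$, applying the classical Brauer--Siegel theorem to each $L_i$, and using the conductor--discriminant estimate $d(L_i)\mid d(L)^{[L:\Q]}$ to relate the $d(L_i)$ to $d(S)=d(L)$, one obtains $|L^\ast(X^*(S)_\Q,0)|\approx d(S)^{1/2}$ in the sense of polynomial equivalence. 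Under hypothesis~\eqref{CompactCM}, the rank of $S(\Z)$ is bounded, so a Zimmert-type lower bound gives $R(S)\gg 1$ and a Minkowski--Hermite upper bound gives $R(S)\ll d(S)^{o(1)}$, hence $R(S)\approx 1$. Combining these yields $h(S)\approx d(S)^{1/2}\approx d(S)$.

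The principal obstacle is the lower bound on $h(S)$, which ultimately depends on Siegel's ineffective estimate on the residue of $\zeta_L(s)$ at $s=1$ (equivalently, on real zeros of Dedekind zeta functions). This is precisely why the conclusion is stated in the polynomial-equivalence form $\approx$ rather than with an explicit constant, and why the result must be imported as analytic input from~\cite{UY4}; the reduction steps and the cohomological bookkeeping are of a structural nature and produce no further difficulty, whereas isolating the $L$-value so that Brauer--Siegel applies is the substantive analytic content.
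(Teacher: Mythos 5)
The paper itself gives no proof of this statement: it is imported verbatim as~\cite[Th.~2.3]{UY4} (see also~\cite{Tsi}), and your sketch --- Jordan--Zassenhaus reduction to finitely many Galois module types, isogeny invariance, the Ono--Shyr class number formula, Brauer induction together with Brauer--Siegel and conductor--discriminant comparisons --- is essentially the strategy of the cited proof, so you are reconstructing the reference rather than taking a different route. One caveat: your treatment of the regulator is mis-justified. Bounded unit rank does \emph{not} imply $R(S)\ll d(S)^{o(1)}$ (real quadratic fields have unit rank one and regulator comparable to a power of the discriminant), and since the lower bound $h(S)\succcurlyeq d(S)^{\delta}$ requires $R(S)$ to be polynomially negligible, the step as written would collapse. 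What actually saves the argument is stronger and simpler: hypothesis~\eqref{CompactCM} forces the $\R$-rank of $S$ to equal its $\Q$-rank, so by Ono's Dirichlet-type unit theorem the unit group of $S$ is finite and the regulator term is genuinely trivial (bounded in terms of $\dim(S)$ alone); with that correction your outline matches the proof in~\cite{UY4}.
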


\subsection{Lower bounds for the sizes of Galois orbits of special points}
Let~$(T,\{x\})$ be a special Shimura datum and~$E(T,\{x\})$ it reflex field and 
\[
(E(T,\{x\})\tens \A_f)^\times\to T(\A_f)
\]
its reflex norm. Consider the image
\[
\Gamma_{(T,\{x\})}\leq cl(T)
\]
of~$(E\tens \A_f)^\times$ in~$cl(T)$. Let~$S$ be a subtorus(subgroup)
and~$\pi:cl(T)\to cl(S)$ the quotient morphism. Denote by~$(T/S,\{\pi(x)\})$ the quotient Shimura datum.

By Th.~\ref{thm:Borel}, we have
\[
\#\ker\pi \leq \#Sha(S)\cdot \#cl(S).
\]
It follows
\begin{equation}\label{eq:descend AO bound}
\#\pi(\Gamma_{(T/S,\{\pi(x)\})})=\frac{\#\Gamma_{(T,\{x\})}}{\#\Gamma_{(T,\{x\})}\cap \ker(\pi)}\geq 
\frac{\#\Gamma_{(T,\{x\})}}{\#Sha(S)\cdot \#cl(S)}.
\end{equation}
Denote by~$L(S),L(T)$ and~$L(T/S)$ the spltiing field of~$S,T$ and~$T/S$ and denote by~$d(S),d(T)$ and~$d(T/S)$ the absolute value of their discriminant.
We have~$L(T)=L(S)\cdot L(T/S)$ and we deduce~$d(S)\cdot d(T/S)\approx d(T)$ as~$\dim(L(T))$ is bounded.



\begin{proposition} Assume~$(G,X)$ to be of abelian type. As~$(T,x)$ ranges through all special subdata of~$(G,X)$, we have
\[
\#\Gamma_{T,x}\succcurlyeq d(T).
\]
\end{proposition}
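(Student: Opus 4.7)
The plan is to deduce the bound from the case of the Siegel modular variety, where Tsimerman's consequence of the averaged Colmez conjecture (the TsiAG reference) provides the analogous estimate $\#\Gamma_{T',x'}\succcurlyeq d(T')$ for special CM subdata $(T',x')$ of $(GSp(V),\mathcal{H})$, and then to descend that bound to $(T,x)$ via the quotient inequality \eqref{eq:descend AO bound} together with the Brauer--Siegel bound of \S\ref{sec:Brauer}.

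First, I would exploit the abelian-type hypothesis: there exists a Hodge-type datum $(G_1,X_1)$ with $G_1^{ad}\simeq G^{ad}$ together with an embedding $(G_1,X_1)\hookrightarrow (GSp(V),\mathcal{H})$. Given a special subdatum $(T,x)\leq (G,X)$, the point $x^{ad}\in X^{ad}$ lifts to a CM point $x_1\in X_1$ whose Mumford--Tate torus $T_1\leq G_1$ satisfies $T_1^{ad}\simeq T^{ad}$; composing with the Siegel embedding furnishes an auxiliary CM subdatum $(\widetilde T,\widetilde x)$ of $(GSp(V),\mathcal{H})$ with $\widetilde T^{ad}\simeq T^{ad}$. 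The Siegel bound then yields
\[
\#\Gamma_{\widetilde T,\widetilde x}\succcurlyeq d(\widetilde T).
\]

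Second, I would descend along the natural isogeny $\widetilde T\to T$ induced by the adjoint identification, together with an auxiliary central piece to match reflex data. Letting $S$ denote the kernel torus, the inequality \eqref{eq:descend AO bound} gives
\[
\#\Gamma_{T,x}\;\geq\;\frac{\#\Gamma_{\widetilde T,\widetilde x}}{\#\mathrm{Sha}(S)\cdot \#cl(S)}.
\]
By Theorem~\ref{thm:Borel}, $\#\mathrm{Sha}(S)$ is bounded uniformly in terms of $\dim(S)$ (which is itself uniformly bounded in our family), and by the Brauer--Siegel theorem of \S\ref{sec:Brauer}, $\#cl(S)\approx d(S)$. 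Since $L(\widetilde T)=L(S)\cdot L(T)$ and the degrees of the splitting fields are uniformly bounded, we have $d(\widetilde T)\approx d(S)\cdot d(T)$. Substituting yields the desired $\#\Gamma_{T,x}\succcurlyeq d(T)$.

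The main obstacle is the lower polynomial comparison $d(\widetilde T)\succcurlyeq d(S)\cdot d(T)$: while the conductor--discriminant inequality easily gives $d(\widetilde T)\preccurlyeq (d(S)\cdot d(T))^a$ for some universal $a$, the reverse direction requires that no unbounded extra ramification is introduced through the Siegel embedding, which forces one to choose the lift $x_1$ and the auxiliary central piece with some care. A related subtlety is verifying that the reflex norm of $(\widetilde T,\widetilde x)$ surjects onto $\Gamma_{T,x}$ up to controlled loss, i.e.\ that the reflex map commutes with the adjoint lift: this is exactly the content making \eqref{eq:descend AO bound} applicable in the present setting and accounts for the $\#\mathrm{Sha}(S)$ factor in the denominator.
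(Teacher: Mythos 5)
Your overall route is the paper's: reduce to the Siegel case, where Tsimerman's theorem gives the bound, then descend using \eqref{eq:descend AO bound} and a discriminant comparison. But the descent step as you have set it up has a genuine gap, in two respects. First, there is in general no isogeny $\widetilde T\to T$: the tori $\widetilde T\leq G_1$ and $T\leq G$ are only linked through their common adjoint image $T^{ad}$, so the correct descent is from the lift $T'$ inside the chosen subdatum $(G',X')$ of the Siegel datum down to the adjoint CM datum $(T^{ad},x^{ad})$, followed by the separate, easy comparisons $\#\Gamma_{T,x}\geq \#\Gamma_{T^{ad},x^{ad}}$ (up to a bounded reflex-field index) and $d(T)\approx d(T^{ad})$ (these differ by at most a bounded power of the constant $d(Z(G))$). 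Your ``auxiliary central piece to match reflex data'' is precisely where this has to be done, and it is left unspecified.

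Second, and more seriously, the cancellation you propose does not survive polynomial equivalence if $S$ is allowed to vary. You bound the loss in \eqref{eq:descend AO bound} by $\#Sha(S)\cdot\#cl(S)$ with $\#cl(S)\approx d(S)$ from \S\ref{sec:Brauer}, and hope to cancel this against the factor $d(S)$ in $d(\widetilde T)\approx d(S)\cdot d(T)$. But $\approx$ and $\succcurlyeq$ only control quantities up to constants and exponents, so a chain of the form $\#\Gamma_{T,x}\succcurlyeq d(\widetilde T)/d(S)^{a}\succcurlyeq (d(S)\cdot d(T))^{1/b}/d(S)^{a}$ yields nothing like $\succcurlyeq d(T)$ once $d(S)$ can grow with $d(T)$; this is exactly why you are left with your unresolved ``main obstacle''. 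The point you are missing, and the reason the paper's proof is short, is that the kernel of the descent is central: the paper takes $S=Z(G')$ (and then $Z(G)$), a torus fixed once and for all, so that $d(S)$, $\#cl(S)$ and $\#Sha(S)$ (finite by Theorem~\ref{thm:Borel}) are constants depending only on $(G',X')$, and $d(T')\approx d(S)\cdot d(T^{ad})\approx d(T)$ is immediate from $L(T')=L(S)\cdot L(T^{ad})$ with $L(S)$ fixed; no care in the choice of the lift and no ``extra ramification'' issue arises, since the splitting field of the kernel sits inside the fixed field $L(Z(G'))$. With this modification (descend along the fixed centre to the adjoint datum, then compare $T$ with $T^{ad}$), your argument closes and coincides with the paper's.
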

\begin{proof}

In the case~$(G,X)=(GSp(2g),H_{g})$ this is~\cite[(1) p.\,386]{TsiAG}. 

The case where~$(G,X)$ is a subdatum of~$(GSp(2g),H_{g})$
follows immediately.

Let~$(G',X')$ be a subdatum of~$(GSp(2g),H_{g})$. Then every CM subdatum~$(T,x)$ of~$(G'^{ad},X'^{ad})$
is of the form~$(T'/Z(G),x'^{ad})$ for a CM Shimura subdatum~$(T',x')$  of~$(G',X')$. Let~$S:=Z(G)$. By~\eqref{eq:descend AO bound} we have
\begin{equation}\label{eq:truc}
\#\Gamma_{T,x}\geq \#\Gamma_{T',x'}\cdot f(S)\succcurlyeq f(S)\cdot d(T')\approx f(S)d(S)d(T).
\end{equation}
with~$f(S)>0$ and~$d(S)$ depending only on~$G'$.

The Shimura datum~$(G,X)$ is of abelian type of and only if there is a subdatum~$(G',X')$ of~$(GSp(2g), H_{g})$ such that~$(G^{ad}, X^{ad})=(G'^{ad}, X'^{ad})$.
As~$(T,x)$ ranges through CM subdata of~$(G,X)$, we can apply~\eqref{eq:truc} to~$(T/Z(G),x^{ad})$. We deduce
\[
\#\Gamma_{T,x}\geq \#\Gamma_{T/Z(G),x^{ad}}\succcurlyeq d(T/Z(G))\approx d(T)/d(Z(G)).
\]
with~$d(Z(G))$ depending on on~$G$. 
\end{proof}

\subsection{An auxiliary Torus} Let~$(M,X_M)\leq (G,X)$ be a Shimura subdatum and~$(T,\tau)$ be a CM Shimura subdatum and~$(S',\tau':=ad_{M}(\tau))\leq (M^{ad},X^{d}_M)$ be the image CM Shimura subdatum.
Let~$S:=T\cap M^{der}$, and let~$x\in (M^{ad},X^{d}_M)$ be a Hodge generic point. Let
\[
\Gamma_{\pi_0,M,x,F}=...
\]
We have then
\[
\Gamma_{\pi_0,M,x,F}\cdot[F\cap \ol{\Q}:E(M^{ab},x^{ab})]\geq\Gamma_{(M^{ab},x^{ab})}\succcurlyeq f(S)\cdot d(M^{ab})=f(S)\cdot Z(M)
\]
Assume~$(G, X)$ is of abelian type or assume GRH. Then, as~$x$ varies in the hybrid Hecke orbit, we have
\begin{equation}\label{eq:dbound1}
\Gamma_{\pi_0,M,x,F}\succcurlyeq d(Z(M)).
\end{equation}
Moreover
\begin{equation}\label{eq:dbound2}
[F([x,1]):F]\geq \Gamma_{\pi_0,M,x,F}.
\end{equation}

\section{Bound for the geometric part of Galois orbits}\label{sec:geometric bounds}
We prove the following.

\begin{theorem}\label{th:sec10}
We consider the setting of Conjecture~\ref{conj:Bounds bis} and~\S\ref{sec:dicho}. Assume that~$S=Sh_K(G,X)$ with~$(G,X)$ of abelian type.
Then, as~$x'$ varies in the geometric hybrid orbit~$\Sigma^g(x)$,
\begin{equation}\label{eq:bound geo}
H_f(w(x')^{cent})\cdot [F([x',1])):F]\succcurlyeq H_f(w(x')^{der}).
\end{equation}
\end{theorem}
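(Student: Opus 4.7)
The plan is to adapt the derived-part lower bound of \cite{APZ1,APZ2} to our hybrid setting, using the integral Tate decoupling from Section~\ref{sec:Tate} to isolate the derived component of the Galois image, and absorbing the central twist into the factor $H_f(w(x')^{cent})$. Up to bounded factors, $[F([x',1]):F]$ equals the cardinality of the Galois image $U_{x',F} \leq M_{x'}(\A_f)/Z(M_{x'})(\Q)$ of~\eqref{eq:defi Uxf}, and the target $H_f(w(x')^{der})$ is, by construction in Section~\ref{sec:natural}, a Weil height on $G/Z_G(M^{der}_{y_i})$ controlling the $\Q$-conjugation class of $M^{der}_{x'}$ together with its $\Q$-identification with the model $M^{der}_{y_i}$.

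First, I would combine Theorems~\ref{thm:Uestproduit} and~\ref{thm:complement}---whose hypotheses hold in abelian type thanks to the integral uniform Tate property established via~\cite[App.~C]{APZ2}---to extract a subgroup $V=\prod_p V_p \leq U_{x',F} \cap M^{der}_{x'}(\A_f)$ of index uniformly bounded along the hybrid orbit, with each local factor $V_p$ generated by topologically $p$-nilpotent elements. By the product decomposition~\eqref{Uprimisproduct}, together with Theorem~\ref{thm:puissancecentre} which guarantees that $U_{x',F}$ contains a fixed power of $Z(M_{x'})(\A_f)/Z(M_{x'})(\Q)$, passing from $U_{x',F}$ to $V$ loses at most a factor polynomially bounded in $H_f(w(x')^{cent})$. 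In view of the factorisation~\eqref{eq:Hf vs cent et der}, this reduces~\eqref{eq:bound geo} to a lower bound of the form $|V| \succcurlyeq H_f(w(x')^{der}) \cdot H_f(w(x')^{cent})^{-C}$ for some $C$ depending only on the hybrid datum.

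Second, I would compare $|V|$ with $H_f(w(x')^{der})$ following the scheme of~\cite{APZ1,APZ2}. The $\Q$-isomorphism encoded by $w(x')^{der}$ transports $V$ into an adelic subgroup of $M^{der}_{y_i}(\A_f)$; by Theorem~\ref{Th:EYext} the local torus factors cannot fail to lie in $GL(N,\widehat{\Z})$ unless they produce ramified contributions of controlled size. Applying the adelic stability Theorem~\ref{thm:84} (with exponent $h$ provided by Theorem~\ref{thm:puissancecentre}) to a suitable enveloping $\Q$-torus $T \leq M^{der}_{x'}$, and invoking the Height-equals-Discriminant Theorem~\ref{thm:AOvsHeight}, one identifies the resulting adelic discriminant with a Weil height on $G/N_G(T)$ and deduces the required polynomial lower bound for $|V|$.

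The main obstacle will be the second step, specifically showing that the height computed through the APZ machinery on an embedded enveloping torus of $M^{der}_{x'}$ recovers the intrinsic $H_f(w^{der})$ of Section~\ref{sec:natural} up to a polynomial factor in $H_f(w^{cent})$. This requires using the closed embedding $W \hookrightarrow G/N \times G/Z$ of Proposition~\ref{prop:factor W} together with the functoriality Theorem~\ref{thm:func heights} to bridge between the torus-discriminant height and the height on $G/Z_G(M^{der}_{y_i})$, while tracking how the central twist $w^{cent}$ enters the embedding of $M^{der}_{x'}$ in $G$. Once this comparison is verified, the two steps combine via the dichotomy trick~\eqref{eq:dichotomy trick} of Section~\ref{sec:dicho} to yield~\eqref{eq:bound geo}.
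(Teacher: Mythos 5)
Your first stage is broadly in the spirit of the paper's argument: one does extract the subgroup $V=\prod_p V_p$ of topologically $p$-nilpotent generators from Theorems~\ref{thm:Uestproduit} and~\ref{thm:complement}, transports it by the canonical identification $\phi_{x'}\colon M^{der}_x\to M^{der}_{x'}$, and bounds $[F([x',1]):F]$ from below by $\prod_p[V_{p,x'}:V_{p,x'}\cap K]$ (note two small inaccuracies: Theorem~\ref{thm:complement} gives a bounded \emph{exponent}, not a bounded index, and the Galois degree compares with the index $[U_{x',F}:U_{x',F}\cap\ol{K_{x'}}]$, not with the ``cardinality'' of $U_{x',F}$). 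The genuine gap is in your second step, which is the core of the proof. Theorems~\ref{Th:EYext}, \ref{thm:84} and~\ref{thm:AOvsHeight} are statements about \emph{tori}; in the paper they are the engine of the \emph{central} bound $[F([x',1]):F]\succcurlyeq H_f^{cent}(x')$ (proof of Theorem~\ref{thm:tech}, via $Z(M_{x'})$, $\Theta_h(Z(M_{x'})_{max})$ and the Brauer--Siegel/Tsimerman input), and they cannot produce the derived bound. The quantity $H_f(w(x')^{der})$ is a height on $G/Z_G(M^{der}_{y_i})$: it records the $\Q$-rational identification of $M^{der}_{x'}$ with the model, i.e.\ the denominators of $\psi^{-1}_{x'}$ in the adjoint representation, not the arithmetic of any subtorus of $M^{der}_{x'}$. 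Already in the pure generalised Hecke orbit case, where $M_{x'}^{der}$ is a fixed semisimple group for all $x'$, $H_f(w^{der})$ is the isogeny-degree-type height of~\cite{APZ1}, while a maximal (``enveloping'') torus of $M^{der}_{x'}$ can have bounded discriminant as $H_f(w^{der})\to\infty$; so no torus-discriminant height, even after routing through Proposition~\ref{prop:factor W} and Theorem~\ref{thm:func heights}, can dominate $H_f(w^{der})$ up to a power of $H_f(w^{cent})$. The obstacle you flag at the end is therefore not a technical bridge to be checked but a dead end.

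What the paper actually does at this point, and what is missing from your proposal, is a comparison of orbit heights for \emph{tuples of group elements}, not tori: one chooses finite sets $F_p\subseteq V_p$ of topologically $p$-nilpotent generators, $D\subseteq M^{der}_x(\Q)$ and $E\subseteq Z(M_x)(\Q)$ Zariski-dense, observes that $v=((f)_{f\in F_p},(e)_{e\in E})$ and $v'=((d)_{d\in D},(e)_{e\in E})$ have the same stabiliser $Z_G(M^{der}_x)\cap Z_G(Z(M_x))$, verifies the integral and mod~$p$ hypotheses of~\cite[Th.~6.1]{APZ2} (closedness of the orbits over $\Q_p$ and over $\F_p$ via Richardson and Serre's strong reductivity, equality of the stabilisers over $\F_p$ for $p\gg0$, semisimplicity of the reduction of $V\cdot\Theta_h(Z(M_x)_{max})$ -- this is where Theorem~\ref{thm:puissancecentre} and the uniform integral Tate property really enter), and concludes $\prod_p H_p(g\cdot v)\approx\prod_p H_p(g\cdot v')$. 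Combined with the $\log/\exp$ estimates of~\cite[Prop.~A1, Th.~A3]{APZ1}, which give $[V_{p,x'}:V_{p,x'}\cap K]\succcurlyeq H_p(\phi_{x'}(f))$ uniformly in $p$, and multiplying by $H_f(w^{cent})$ (which controls the contribution of the $E$-coordinates), one obtains $H_f(w^{cent})\cdot[F([x',1]):F]\succcurlyeq H_f(g\cdot v')\geq H_f(g\cdot(d)_{d\in D})\approx H_f(w^{der})$. Without this tuple-orbit comparison (or a substitute for it), your outline does not prove~\eqref{eq:bound geo}.
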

In proving~\eqref{eq:bound geo}, we can pass to the adjoint datum~$(G^{ad},X^{ad})$. In the rest of \S\ref{sec:geometric bounds}, we assume that
\begin{equation}\label{eq:G is adj}
G=G^{ad}.
\end{equation}

Let~$V=\prod_p V_p$ as in Theorem~\ref{thm:complement}. 
We first reduce the estimate~\eqref{eq:bound geo} to an estimate on adelic obits.
\begin{lemma}\label{lem:reduction}
 There exists~$c(G,X)\in\R_{>0}$ such that, for every~$x'$ in the geometric hybrid orbit of~$x$,
\begin{equation}\label{69cmieux}
\# Gal(\ol{F}/F\cdot E(M_{x'},X_{x'}))\cdot [x',1] \geq c(G,X)\cdot [V:V\cap K].
\end{equation}
\end{lemma}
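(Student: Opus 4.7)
The plan is to interpret the left-hand side of \eqref{69cmieux} as an adelic orbit index in $M_{x'}(\A_f)/Z(M_{x'})(\Q)$, then bound it below by a sub-orbit generated by a version of $V$ attached to~$x'$. By the reciprocity relation $\sigma([x',1])=[x',1]\cdot\rho_{x'}(\sigma)$ from \eqref{eq:defi rhox}, the Galois orbit of $[x',1]$ over $F\cdot E(M_{x'},X_{x'})$ has cardinality
\[
[U_{x'}:U_{x'}\cap \pi_{x'}(K\cap M_{x'}(\A_f))],
\]
where $\pi_{x'}\colon M_{x'}(\A_f)\to M_{x'}(\A_f)/Z(M_{x'})(\Q)$. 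This is a standard computation at infinite level, descended to level~$K$.

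Next, I would apply Theorem~\ref{thm:complement} \emph{with $x'$ in place of $x$} to obtain $V_{x'}\leq U_{x'}\cap M^{der}_{x'}(\A_f)$ whose data depend only on $x'^{ad}$ and~$F$; by the integral uniform Tate property established for abelian type in Section~\ref{sec:Tate}, the hypotheses hold for $x'$. The key claim is that, via the geometric hybrid orbit construction from the proof of Theorem~\ref{thm:finiteness geometric hybrid}, the $\Q$-algebraic isomorphism $\phi\colon M^{der}\xrightarrow{\sim} M^{der}_{x'}$ arising from the parametrisation identifies the subgroup $\phi(V)$ with $V_{x'}$ up to an index bounded uniformly in terms of $(G,X)$. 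This uses that both $\phi(V)$ and $V_{x'}$ are characterised locally at each prime~$p$ by being generated by topologically $p$-nilpotent elements of $U'_{x'}\cap M^{der}_{x'}(\Q_p)$, together with the functoriality of $\rho$ under the morphism $\Phi\colon(M,X)\to(M^{ad}_{x'},X^{ad}_{x'})$ that defines the hybrid orbit.

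Finally, from $V_{x'}\leq U_{x'}$ I would get
\[
\# Gal\text{-orbit}\geq [V_{x'}:V_{x'}\cap \pi_{x'}(K\cap M_{x'}(\A_f))],
\]
and since $V_{x'}\subseteq M^{der}_{x'}(\A_f)$, with $M^{der}_{x'}\cap Z(M_{x'})$ finite of order bounded in terms of $(G,X)$, the projection~$\pi_{x'}$ is essentially injective on $V_{x'}$; thus the above index is comparable, up to a uniform constant, to $[V_{x'}:V_{x'}\cap K]$. Using $\phi(V)\approx V_{x'}$ and that the two open compact subgroups $\phi^{-1}(K\cap M^{der}_{x'}(\A_f))$ and $K\cap M^{der}(\A_f)$ of $M^{der}(\A_f)$ are commensurable with index bounded uniformly over the hybrid orbit (because the hybrid orbit gives only finitely many conjugacy classes of $\Q$-embeddings $M^{der}\hookrightarrow G$, by the finiteness proved in Section~\ref{sec:geometric hybrid orbit}), one obtains $[V_{x'}:V_{x'}\cap K]\geq c(G,X)\cdot [V:V\cap K]$ and hence \eqref{69cmieux}.

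The principal obstacle is the uniform comparison between the intrinsically-defined $V_{x'}\leq U_{x'}$ and the transport $\phi(V)$ of the fixed~$V\leq U_x$. This is not a conjugation inside $G(\A_f)$ (the element~$g$ realising the hybrid orbit parameter lies in $G(\R)^+$, not in $G(\A_f)$), so one cannot simply conjugate~$K$; instead, one must use that $\phi$ is $\Q$-algebraic together with the finiteness of the set of Lie algebra morphisms $\mathfrak{m}^{ad}\to\mathfrak{g}$ arising from hybrid orbit points (as in the proof of Theorem~\ref{thm:finiteness geometric hybrid}) to ensure the commensurability bound between $\phi^{-1}(K\cap M^{der}_{x'}(\A_f))$ and $K\cap M^{der}(\A_f)$ is uniform in~$x'$.
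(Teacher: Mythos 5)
Your first three steps reproduce the paper's argument: the orbit size is computed adelically as in \eqref{eq:gal vs adel}--\eqref{eq:gal vs adel2}, Theorem~\ref{thm:complement} is invoked at~$x'$ (legitimate, since the datum of Theorem~\ref{thm:complement} depends only on~$x'^{ad}$ and~$F$, and the geometric hybrid orbit supplies a $\Q$-isomorphism of pointed adjoint data carrying~$x^{ad}$ to~$x'^{ad}$), and the passage from~$M_{x'}(\A_f)/Z(M_{x'})(\Q)$ to~$M^{der}_{x'}(\A_f)$ costs only~$\#Z(M^{der}_{x'})(\Q)$, bounded in terms of~$G$ — this is exactly the observation the paper makes. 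The genuine gap is your final transfer step: the claim that~$\phi_{x'}^{-1}(K\cap M^{der}_{x'}(\A_f))$ and~$K\cap M^{der}(\A_f)$ are commensurable with index bounded \emph{uniformly over the orbit} is false, and if it were true it would trivialise the theorem this lemma feeds. As~$x'=\pi(w)$ ranges over the geometric hybrid orbit,~$M^{der}_{x'}$ is a rational conjugate of~$M^{der}$ by~$w\in W(\Q)^+$ of unbounded height, and~$[\phi_{x'}(V):\phi_{x'}(V)\cap K]$ grows polynomially with~$H_f(w^{der})$ — that growth is precisely what \eqref{69cmieuxbise} and Theorem~\ref{th:sec10} exploit to prove \eqref{eq:bound geo}; it cannot be uniformly comparable to the fixed constant~$[V:V\cap K]$. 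The finiteness from \S\ref{sec:geometric hybrid orbit} (finitely many conjugacy classes of morphisms~$\mathfrak{m}^{ad}\to\mathfrak{g}$) controls the abstract isomorphism class of the embedding, not its adelic position relative to~$K$, so it cannot yield the commensurability you invoke.

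The trouble comes from reading the statement literally: in \eqref{69cmieux} the group~$V$ is to be understood inside~$M^{der}_{x'}(\A_f)$ via the distinguished isomorphism~$\phi_{x'}$, i.e.~$[V:V\cap K]$ means~$[\prod_p V_{p,x'}:\prod_p V_{p,x'}\cap K]$ with~$V_{p,x'}=\phi_{x'}(V_p)$; this is how \eqref{69cmieux} is combined with \eqref{69cmieuxbise} later in \S\ref{sec:geometric bounds} (read with the fixed~$V$ attached to~$x$, the right-hand side is a constant and the lemma is vacuous). With that reading your last paragraph is not needed: stop at
\[
\# Gal(\ol{F}/F\cdot E(M_{x'},X_{x'}))\cdot [x',1]\;\geq\; \frac{1}{\#Z(M^{der}_{x'})(\Q)}\,[V_{x'}:V_{x'}\cap K],
\]
which is the paper's proof. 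A secondary caveat: your identification of~$\phi_{x'}(V)$ with ``the''~$V_{x'}$ via a characterisation by topologically $p$-nilpotent generators is not what Theorem~\ref{thm:complement} provides (it constructs \emph{some} admissible~$V$, not a canonical one); the clean justification is that the construction only uses~$(M^{ad},X^{ad},x^{ad},F)$, which~$\phi_{x'}$ transports compatibly with the adjoint Galois representations, so~$\phi_{x'}(V)$ is itself a valid choice at~$x'$.
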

\begin{proof}[Proof of Lemma~\ref{lem:reduction}] Observe that $M_{x'}^{der}(\A_f)\to M_{x'}(\A_f)/Z(M_{x'})(\Q)$ is a map of degree at most~$\#Z(M^{der}_{x'})(\Q)$, which can be bounded in terms of~$G$.
We can thus argue as in~\eqref{eq:gal vs adel}--\eqref{eq:gal vs adel2}.
\end{proof}

We will apply~\cite[Th.~6.1]{APZ2}. We need to introduce vectors in representations of~$G$.

We observe that~$\exp(2p\mathfrak{gl}(N,\Z_p))$ is an open subgroup of~$GL(N,\Q_p)$, and that, for~$p\gg0$, every topologically~$p$-nilpotent element~$u\in GL(N,\Q_p)$
is of the form~$u=\exp(X)$ with~$X=\log(u)$. After replacing~$V$ by an open subgroup, there are finite subsets~$F_p\subseteq V_p$ such that for every~$p$ the subgroup generated by~$F_p\subseteq V_p$ is dense in~$V_p$ 
and such that every~$f\in F_p$ we have~$f=\exp(\log(f))$. This implies that~$f$ is topologically~$p$-nilpotent.

We consider a finite subset~$D\subseteq M_x^{der}(\Q)$, resp.~$E\subseteq Z(M_x)(\Q)$, which generates a Zariski dense subgroup of~$ M_x^{der}$, resp.~$Z(M_x)$.


The algebraic group~$G$ acts on~$\mathfrak{gl}(N)_{\Q}$ via the adjoint representation. We deduce a representation in~$\mathfrak{gl}(N)_{\Q}^{F_p}$, in~$\mathfrak{gl}(N)_{\Q}^{D}$ and in~$\mathfrak{gl}(N)_{\Q}^{E}$.
The stabiliser of~$(f)_{f\in F_p}$, of~$(d)_{d\in D}$ and~$(e)_{e\in E}$ is respectively
\[
Z_G(V_p)(\Q_p)\text{ and }Z_G(M_x^{der})(\Q_p)\text{ and }Z_G(Z(M_x))(\Q_p).
\] 
The stabilisers of~
\[
v:=((f)_{f\in F_p},(e)_{e\in E})\text{ and~}v':=((d)_{d\in D},(e)_{e\in E})
\] are thus
\[
Z_G(V_p)(\Q_p)\cap Z_G(Z(M_x))(\Q_p)\text{ and } Z_G(M_x^{der})(\Q_p)\cap Z_G(Z(M_x))(\Q_p)
\]
respectively.

For~$u\in GL(N,\Q_p)^{F_p}$, let us define 
\begin{equation}\label{eq:ht truc}
H_p(u):=\max\{p^k|p^k \cdot u\in \End(\Z_p^N)^{F_p}\}
\end{equation}
and similarly for~$u\in GL(N,\Q_p)^{D}$ and~$u\in GL(N,\Q_p)^{E}$.

As~$x'\in \Sigma^g(x)$, we have a distinguished isomorphism~$\phi_{x'}:M^{der}_x\simeq M^{der}_{x'}$. Let~$V_{p,x'}:=\phi_{x'}(V_p)\leq M^{der}_{x'}(\Q_p)\leq GL(N,\Q_p)$ be the image of~$V_p$ 
and~$u'=u(x'):=(\phi_{x'}(f))_{f\in F_p}$. We claim the following:
\begin{equation}\label{69cmieuxbise}
[V_{p,x'}:V_{p,x'}\cap K]\succcurlyeq H_p(u').
\end{equation}
\begin{proof}For~$f\in F_p$, let~$\phi_{x'}(f)^{\Z_p}\leq V_{p,x'}$ denote the closed subgroup generated by~$\phi_{x'}(f)$. We have
\[
[V_{p,x'}:V_{p,x'}\cap K]\geq \max_{f\in F_p} [\phi_{x'}(f)^{\Z_p}:\phi_{x'}(f)^{\Z_p}\cap K].
\]
On the other hand~$H_p(u')\approx \max_{f\in F_p} H_p(\phi_{x'}(f))$.
It is enough to prove that~$\forall f\in F_p,[\phi_{x'}(f)^{\Z_p}:\phi_{x'}(f)^{\Z_p}\cap K]\succcurlyeq H_p(\phi_{x'}(f))$.

Recall that~$f=\exp(\log(f))$. It follows that there exists~$P_f\in\Q_p[T]$ such that~$f=P_f(\log(f))$. We deduce that~$\phi_{x'}(f)=P_f(\log(\phi_{x'}(f)))$
and 
\begin{equation}\label{eq:domination exp matrice}
H_p(\phi_{x'}(f))\preccurlyeq H_p(\log(\phi_{x'}(f))).
\end{equation}
Arguing as in~\cite[Prop. A1 (68)]{APZ1}, there is~$p(N)$ such that
\begin{equation}\label{eq:domination exp matrice uniforme}
\forall p\geq p(N),H_p(\phi_{x'}(f))\leq H_p(\log(\phi_{x'}(f)))^{N-1}.
\end{equation}
By~\cite[Th. A3]{APZ1}, we have
\[
[\phi_{x'}(f)^{\Z_p}:\phi_{x'}(f)^{\Z_p}\cap K]\geq \frac{1}{N}\cdot H_p(\log(\phi_{x'}(f)))\succcurlyeq
H_p(\phi_{x'}(f)).\qedhere
\]
\end{proof}


 Note that~$V\leq M^{der}(\A_f)\leq GL(N,\A_f)$ is a compact subgroup. Therefore~$(f)_{f\in F_p}\in GL(N,\Z_p)^{F_p}$ for~$p\gg0$. As~$(d)_{d\in D}$ and~$(e)_{e\in E}$ are defined over~$\Q$, we have~$(d)_{d\in D}\in GL(N,\Z_p)^{D}$ and~$(e)_{e\in E}\in GL(N,\Z_p)^{E}$ for~$p\gg0$.
For sufficiently large~$p$, we can define
\[
\ol{v}\in \End(\F_p^N)^D\times \End(\F_p^N)^E\qquad \ol{v'}\in \End(\F_p^N)^{F_p}\times \End(\F_p^N)^E.
\]

In order to use~\cite[Th. 6.1]{APZ2}, we need to check that the assumptions are satisfied.

The datum~$(G,X)$ is of abelian type and~$G$ is adjoint by \eqref{eq:G is adj}. By~\cite[\S 4.3\text{ and }Cor. 4.11]{APZ2},
the point~$x\in X$ satisfies~\cite[Déf. 2.1, 2.3]{APZ2}. By~\cite[Cor. B.11]{APZ2} and Th.~\ref{thm:puissancecentre} and \eqref{thm:goursat:eq:exposant} and~\eqref{eq:thm:cor CU},
the group~$W:=V\cdot \{z^h|z\in Z(M_{x})_{max}\}$  satisfies~\cite[Def. 2.1]{APZ2}.

By~(1a) of~\cite[Def. 2.1]{APZ2}, we have
\[
Z_{G(\Q_p)}(V_p\cdot Z(M_x))=Z_G(M_x)(\Q_p)= Z_G(M^{der}_x)\cap Z_G(Z(M_x)).
\]
This implies
\begin{equation}\label{eq:assumption 1}
Stab_{G_{\Q_p}}(v)=Stab_{G_{\Q_p}}(v').
\end{equation}
By~(1b) of~\cite[Def. 2.1]{APZ2}, the action of~$V\cdot \{z^h|z\in Z(M_{x})_{max}\}$ on~$\Q_p^N$ is semisimple.
This implies that~$\ol{W }^{Zar}\leq GL(N)_{\Q_p}$ is reductive. By~\cite{Richardson}, we deduce that the orbit
\[
G\cdot v\subseteq G^{F_p\sqcup D}
\]
is Zariski closed. As~$G$ is adjoint, we have~$G\leq SL(N)$. We deduce that inclusions of subvarieties
\[
G\cdot v\subseteq G^{F_p\sqcup E}\subseteq SL(N)^{F_p\sqcup E}_{\Q_p}\subseteq \End(\Q_p^N)^{F_p\sqcup E}
\]
are Zariski closed. Likewise,~$M_x$ being reductive,~\cite{Richardson} implies that
\[
G\cdot v'\subseteq G^{D\sqcup E}\subseteq SL(N)^{D\sqcup E}_{\Q_p}\subseteq \End(\Q_p^N)^{D\sqcup E}
\]
is Zariski closed.

We note that the inclusion~$Z(M_x),M_x,G\leq GL(N)$ induces models~$Z(M_x)_{\Z_p},{M_x}_{\Z_p},G_{\Z_p}\leq GL(N)_{\Z_p}$, which, for~$p\gg0$ are hyperspecial. The fibers~$Z(M_x)_{\F_p},{M_x}_{\F_p},G_{\F_p}\leq GL(N)_{\F_p}$ are connected smooth reductive subgroups for~$p\gg0$.
For~$p\gg0$, the image~$\ol{W}$ of~$W$ by~$GL(N,\Z_p)\to GL(N,\F_p)$ is generated by the images~$\ol{f}$ of the~$f\in F_p$ and by~$\{z^h|z\in Z(M_{x})(\F_p)\}$. 

We have~$Stab_{G_{\F_p}}(\ol{v})=Z_{G_{\F_p}}(\{\ol{f}|f\in F_p\})\cap Z_{G_{\F_p}}(\{\ol{e}|e\in E\})$. We claim that
\[
\forall p\gg0,
Z_{G_{\F_p}}(\{\ol{e}|e\in E\})=Z_{G_{\F_p}}(Z(M_{x})_{\F_p})=Z_{G_{\F_p}}(\{z^h|z\in Z(M_{x})(\F_p)\}).
\]
\begin{proof}Consider~$z_1,\ldots,z_k\in Z(M_x)(\Q)$ such that~${z_1}^h\Q+\ldots+{z_k}^h\Q\subseteq End(\Q^N)$ is the~$\Q$-algebra
generated by~$Z(M_x)$. We have thus~$Z_G(Z(M_x))=Z_G(A)=Z_G(\Lambda)$ with~$\Lambda:={z_1}^h\Z+\ldots+{z_k}^h\Z$. For~$p\gg0$,
we can reduce the~$z_i$ and obtain~${\ol{z_1}}^h,\ldots,{\ol{z_k}}^h\in \{z^h|z\in Z(M_{x})(\F_p)\}$. 

Arguing as in~\cite[proof of the claim, p.\,16]{APZ2} , for~$p\gg0$, we have~$(Z_G(A))_{\F_p}=Z_{G_{\F_p}}(\{\ol{z_1}^h;\ldots;\ol{z_k}^h\})$.
For~$p\gg0$, the group scheme~$Z(M_x)_{\Z_p}$ is smooth over~$\Z_p$. Let~$B$ the~$\ol{\F_p}$-module generated
by-$Z(M_x)(\ol{\F_p})$, and let~$y_1,\ldots,y_l\in Z(M_x)(\ol{\F_p})$ be a basis of~$B$. By smoothness there are 
lifts~$\wt{y_1},\ldots,\wt{y_l}\in Z(M_x)(\ol{\Z_p})$ of the~$y_i$. Let~$\wt{B}$ be the~$\ol{\Q_p}$-vector space generated
by the~$\wt{y_i}$. We have~$\wt{B}\subseteq A\tens\ol{\Q_p}$ and thus~$l=\dim(B)\geq \dim(\wt{B})\leq \dim(A)$. But~$\ol{\Lambda}:=\Lambda\pmod{p}\subseteq B$
and~$\dim(\Lambda\pmod{p}\subseteq B)=\dim(A)$ for~$p\gg0$. We deduce~$B=\ol{\Lambda}$ for~$p\gg0$, and thus
\[
Z_{G_{\F_p}}(Z(M_x)_{\F_p})=Z_{G_{\F_p}}(B)=Z_{G_{\F_p}}(\ol{\Lambda})\supseteq 
Z_{G_{\F_p}}(\{\ol{z_1}^h;\ldots;\ol{z_k}^h\})=(Z_G(A))_{\F_p}.
\]
Note that~$\{\ol{z_1}^h;\ldots;\ol{z_k}^h\}\subseteq\{z^h|z\in Z(M_{x})(\F_p)\}\subseteq Z(M_x)_{\F_p}$ and thus
\begin{multline}
Z_{G_{\F_p}}(Z(M_x)_{\F_p})\subseteq Z_{G_{\F_p}}(\{z^h|z\in Z(M_{x})(\F_p)\})\\
\subseteq Z_{G_{\F_p}}\{\ol{z_1}^h;\ldots;\ol{z_k}^h\})\subseteq Z_{G_{\F_p}}(Z(M_x)_{\F_p}).\qedhere
\end{multline}
\end{proof}
We deduce
\begin{multline}
\forall p\gg0,
Stab_{G_{\F_p}}(\ol{v})=Z_{G_{\F_p}}(\{\ol{f}|f\in F_p\})\cap Z_{G_{\F_p}}(\{z^h|z\in Z(M_{x})(\F_p)\})\\
=Z_{G_{\F_p}}(\{\ol{f}|f\in F_p\})\cap Z_{G_{\F_p}}(Z(M_{x})_{\F_p})=Z_{G_{\F_p}}(\ol{W}).
\end{multline}
By~(2a) of~\cite[Def. 2.1]{APZ2}, we obtain
\[
\forall p\gg0, Stab_{G_{\F_p}}(\ol{v})=Z_{G_{\F_p}}(M_{\F_p})=Stab_{G_{\F_p}}(\ol{v'}).
\]

By~(2b) of~\cite[Def. 2.1]{APZ2}, the action of~$\ol{W}$ on~$\F_p^N$ is semisimple for~$p\gg0$.
By~\cite[Th.\,5.4]{SerreCR}, this implies that~$\ol{W}$ is strongly reductive in~$G_{\F_p}$ for~$p\gg0$.
By~\cite[Th. 3.7]{SerreCR} this implies that
\[
G_{\F_p}\cdot \ol{v}\subseteq G_{\F_p}^{F_p\sqcup E}\subseteq SL(N)_{\F_p}^{F_p\sqcup E}\subseteq \End(\F_p^N)^{F_p\sqcup E}
\]
is Zariski closed. One can show that for~$p\gg0$, the subgroup generated by the image of~$D\sqcup E$ in~$G(\F_p)\leq GL(N,\F_p)$ 
acts semisimply on~${\F_p}^N$. This
this implies that
\[
G_{\F_p}\cdot \ol{v'}\subseteq G_{\F_p}^{D\sqcup E}\subseteq SL(N)_{\F_p}^{D\sqcup E}\subseteq \End(\F_p^N)^{D\sqcup E}
\]
is Zariski closed. 

We can now use~\cite[Th. 6.1]{APZ2}.

We deduce that there exists~$C\in\R_{>0}$, depending only on the representation~$G\to GL(\mathfrak{g})$ such that, for~$p\gg0$,
\begin{multline}\label{eq1}
\frac{1}{C}\cdot \log H_p(g\cdot (f)_{f\in F_p},(e)_{e\in E}))\\
\leq
\log H_p(g\cdot ((f)_{f\in D},(e)_{e\in E}))\\
\leq C\cdot \log H_p(g\cdot (f)_{f\in F_p},(e)_{e\in E})) 
\end{multline}
as functions of~$g\in G(\C_p)$. Note that, for every~$p$, the functions~$g\mapsto H_p(g\cdot (f)_{f\in F_p},(e)_{e\in E}))$
and~$g\mapsto H_p(g\cdot ((f)_{f\in D},(e)_{e\in E}))$ are induced by local height functions on the same variety~$G/Stab(v)\simeq G/Stab(v')$.
We thus have
\begin{equation}\label{eq2}
\forall p, H_p(g\cdot ((f)_{f\in F_p},(e)_{e\in E})))\approx H_p(g\cdot ((f)_{f\in D},(e)_{e\in E}))).
\end{equation}

We deduce from~\eqref{eq1} and~\eqref{eq2} that
\[
\prod_p H_p(g\cdot ((f)_{f\in F_p},(e)_{e\in E})))\approx
\prod_p H_p(g\cdot ((f)_{f\in D},(e)_{e\in E}))).
\]

By~\eqref{69cmieux}, we have
\begin{multline}
\# Gal(\ol{F}/F\cdot E(M,x))\cdot [x':1] \geq c(G,X)\cdot \#V/V\cap K \\
=\prod_p[V_p:V_p\cap K]
\end{multline}
Note that~\eqref{69cmieuxbise} is uniform as~$p$ varies, thanks to~\eqref{eq:domination exp matrice uniforme}.
We can multiply over primes and we get
\[
\prod_p[V_p:V_p\cap K]
\succcurlyeq \prod_p H_p(g\cdot ((f)_{f\in F_p})).
\]
We deduce~$\# Gal(\ol{F}/F\cdot E(M,x))\cdot [x':1]\succcurlyeq \prod_p H_p(g\cdot ((f)_{f\in F_p}))$. Multiplying by~$H_f(g\cdot ((e)_{e\in E}))$
we obtain 
\begin{multline}
H_f(g\cdot ((e)_{e\in E}))\cdot 
\# Gal(\ol{F}/F\cdot E(M,x))\cdot [x':1]\\
\succcurlyeq H_f(g\cdot ((e)_{e\in E}))\cdot \prod_p H_p(g\cdot ((f)_{f\in F_p}))\\
\approx \prod_p H_p(g\cdot ((f)_{f\in F_p},(e)_{e\in E}))\\
\approx \prod_p H_p(g\cdot ((f)_{f\in D},(e)_{e\in E}))\\
=H_f(g\cdot ((f)_{f\in D},(e)_{e\in E}))
\geq H_f(g\cdot ((f)_{f\in D}).
\end{multline}
We have proved~\eqref{eq:bound geo}
\section{Proof of Theorem~\ref{thm:tech}}

Let~$(G,K)$ be a Shimura datum, let~$K\leq G(\A_f)$ be an open compact subgroup, and denote by~$S=Sh_K(G,X)$ the corresponding Shimura variety. Let us consider~$x\in X$, and denote by~$(M,X_M)$ the smallest Shimura subdatum of~$(G,X)$ such that~$x\in X_M$. We consider the geometric hybrid orbit~$\Sigma^g(x)\subseteq X$. We choose an extension of finite type~$F/E(G,X)$ such that~$[x,1]\in S(F)$.

For~$x'\in \Sigma^g(x)$, we denote by~$(M_{x'},X_{x'})$  the smallest Shimura subdatum of~$(G,X)$ such that~$x\in X_{x'}$, and define~$K_{x'}:=K\cap M_{x'}(\A_f)$. The morphism of Shimura varieties
\[
Sh_{K_{x'}}(M_{x'},X_{x'})
\to
Sh_K(G,X)
\]
has finite degree, and this degree has an upper bound~$c(S)$ depending only on~$S$. If~$\wt{s'}\in 
Sh_{K_{x'}}(M_{x'},X_{x'})(\ol{F})$ is an inverse image of~$s':=[x',1]\in 
Sh_K(G,X)(\ol{F})$ we have 
\begin{equation}\label{eq:gal vs adel}
[F(s'):F]\geq [F(\wt{s'}):F]/c(S).
\end{equation}

Consider~$\rho_{x'}$ and~$U_{x',F}$ as in~\eqref{eq:defi rhox} and~\eqref{eq:defi Uxf}. Let~$\ol{K_{x'}}:=\frac{K_{x'}\cdot  Z(M_{x'})(\Q)}{Z(M_{x'})(\Q)}\leq \frac{M_{x'}(\A_f)}{Z(M_{x'})(\Q)}$. We then have
\begin{equation}\label{eq:gal vs adel2}
[F(\wt{s'}):F\cdot E(M_{x'},X_{x'})]=[U_{x',F}:U_{x',F}\cap \ol{K_{x'}}].
\end{equation}
Recall that~$[F\cdot E(M_{x'},X_{x'}):F]\leq [E(M_{x'},X_{x'}):\Q]$ is bounded in terms of~$G$ only.

Let~$V=\prod_p V_p$ be as in~Theorem~\ref{thm:complement} and~$h\in\Z_{\geq1}$ be as in Theorem~\ref{thm:puissancecentre}. 

Let~$Z(M_{x'})_{max}\leq Z(M_{x'})(\A_f)$ be the maximal compact subgroup.
The fibres of
\[
Z(M_{x'})_{max}\cdot K_{x'}/K_{x'}\to M_{x'}(\A_f)/(K_{x'}\cdot Z(M_{x'}(\Q))) 
\]
are of finite degree, of cardinality bounded above by~$c'(\dim(G))$ depending only on~$\dim(G)$. Let~$\Theta:=\Theta_h(Z(M_{x'})_{max}):=\{z^{h}|z\in Z(M_{x'})_{max}\}$,
and let~$\ol{\Theta}:=\ol{\Theta}_h(Z(M_{x'})_{max}):=\Theta_h(Z(M_{x'})_{max})\cdot Z(M_{x'}(\Q))/\cdot Z(M_{x'}(\Q))$.
Then
\[
\frac{1}{c'(\dim(G))}[\Theta:\Theta\cap K_{x'}]\leq [\ol{\Theta}:\ol{\Theta}\cap \ol{K_{x'}}].
\]
By Theorem~\ref{thm:puissancecentre} we have~$\ol{\Theta}_h(Z(M_{x'})_{max})\leq U_{x',F}$. Thus
\begin{equation}\label{eq:truc}
[F(\wt{s'}):F]\geq  [\ol{\Theta}:\ol{\Theta}\cap \ol{K_{x'}}]\geq \frac{1}{c(\dim(G))} [\Theta:\Theta\cap K_{x'}].
\end{equation}
By Theorem~\ref{thm:84} we have
\begin{equation}\label{eq:truc22}
\frac{1}{c(\dim(G))} [\Theta:\Theta\cap K_{x'}]\approx [Z(M_{x'})_{max}:Z(M_{x'})_{max}\cap K_{x'}].
\end{equation}

Recall that~$(G,X)$ is of abelian type by assumption. Let~$d(Z(M_{x'}))$ be as in~\S\ref{sec:Brauer}.
By~\eqref{eq:dbound1}, \eqref{eq:dbound2}, we have
\begin{equation}\label{eq:truc3}
[F([x',1]):F]\succcurlyeq d(Z(M_{x'})).
\end{equation}

From~\eqref{eq:truc}, \eqref{eq:truc22} and \eqref{eq:truc3} we deduce
\[
[F([x',1]):F]\succcurlyeq d(Z(M_{x'}))\cdot  [Z(M_{x'})_{max}:Z(M_{x'})_{max}\cap K_{x'}].
\]
By Theorem~\ref{thm:AOvsHeight} we have
\[
H^{cent}_f(w(x')):=\delta_{Z(M_{x'}))}\approx
d(Z(M_{x'})\cdot  [Z(M_{x'})_{max}:Z(M_{x'})_{max}\cap K_{x'}].
\]
We have proved
\[
[F([x',1]):F]\succcurlyeq H_f^{cent}(x').
\]

By Theorem~\ref{th:sec10} we have~$H_f^{cent}(x')\cdot [F([x',1]):F]\succcurlyeq H_f^{der}(x')$. By~\eqref{Route 1} this implies~\eqref{eq:42}.

Theorem~\ref{thm:tech} is proven.

\section{The archimedean part of the height}\label{sec:archimedan part}
\emph{We adapt~\cite[\S5 Th.5.16]{APZ1} to the our more general context.}
\begin{theorem} \label{thm:111}
Let~$\mathfrak{S}\subseteq G(\R)^{+}$ be a finite union of Siegel sets (adapted to~$K_\infty$) and~$W_{\Sigma}\subseteq W(\R)$
be the image of~$\mathfrak{S}$. Let~$H:W(\Q)\to \Z_{\geq1}$ be a global height function and~$H=H_{\R}\cdot H_f$ its factorisation in archimedean and finite part.

Then, as functions on~$W_{\Sigma}\cap W(\Q)$, we have
\[
H_\R\preccurlyeq H_f.
\]
\end{theorem}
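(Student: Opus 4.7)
The plan is to adapt, \emph{mutatis mutandis}, the analogous estimate \cite[Th.~5.16]{APZ1} from the André--Pink--Zannier setting to the hybrid variety $W = G/(N\cap Z)$. The structural inputs we need are that $W$ is affine (Proposition~\ref{prop:factor W}) and that $N\cap Z$ is reductive with $(N\cap Z)(\R)$ compact (as shown in the proof of Theorem~\ref{thm:geom in hybrid}), so the archimedean fibres of the orbit map $G(\R)^+\to W(\R)$ are bounded. First, I would fix a closed $\Q$-affine embedding $\iota\colon W\hookrightarrow \A^m$; by Theorem~\ref{thm:func heights}, $H$ is polynomially equivalent to the pullback of the naive Weil height, so it suffices to work with this concrete $\iota$. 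Using Chevalley's theorem, I can moreover arrange $\iota$ to be $G$-equivariant for a rational representation $G\to \GL(V)$, with the base point $w_0:=1\cdot(N\cap Z)\in W(\Q)$ mapping to a vector whose stabiliser is exactly $N\cap Z$. For $w\in W_\Sigma\cap W(\Q)$, writing $w=g\cdot w_0$ with $g\in\mathfrak{S}$, we then have $\iota(w)=g\cdot\iota(w_0)\in V$.

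Next I would decompose the Siegel set via reduction theory: $\mathfrak{S}\subseteq \omega_0\cdot A^+\cdot K_\infty$, where $A^+$ is a positive Weyl chamber in a maximal $\Q$-split torus $A\leq G$, $\omega_0$ is a compact subset of the unipotent radical of a minimal $\Q$-parabolic, and $K_\infty$ is a maximal compact subgroup of $G(\R)^+$. Writing $g=uak$ accordingly, the coordinates of $\iota(w)=ua\cdot\iota(k\cdot w_0)$ are polynomial in the bounded components $u,k$ and in the characters $\chi(a)$ appearing as weights of $A$ on $V$. This yields
\[
H_\R(w)\preccurlyeq \max_{\chi}|\chi(a)|_\infty^{c}
\]
for a constant $c$ depending only on the representation $V$.

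The main step, and the principal obstacle, will be to establish the complementary lower bound
\[
\max_{\chi}|\chi(a)|_\infty\preccurlyeq H_f(w)^{c'}.
\]
This is the essential content of the proof of \cite[Th.~5.16]{APZ1}. The idea is that the coordinates $\chi(a)\cdot c_\chi$ of $\iota(w)$ are rational numbers whose archimedean absolute values dominate $|\chi(a)|_\infty$ up to bounded factors coming from $u$ and $k$; applying the product formula to these coordinates place by place then forces the non-archimedean denominator $H_f(w)$ to compensate for any archimedean growth of $a$ along the Weyl chamber. Since the proof of \cite[Th.~5.16]{APZ1} depends only on $W$ being affine and on the existence of a $G$-equivariant affine embedding, both available here by Proposition~\ref{prop:factor W}, the argument transposes to the hybrid setting without essential modification. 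Combining the two estimates yields $H_\R\preccurlyeq H_f$ on $W_\Sigma\cap W(\Q)$.
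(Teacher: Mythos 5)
Your overall route coincides with the paper's: Theorem~\ref{thm:111} is proved there simply by rerunning the proof of \cite[Th.~5.16]{APZ1}, replacing the stabiliser $Z_G(M)$ of loc.\ cit.\ by $N\cap Z=Z_G(M^{der})\cap N_G(Z(M))$ and checking the specific steps where that group enters (\cite[(38), \S5.1.1, \S5.5.2, Prop.~5.9]{APZ1}); the structural inputs you list at the outset (affineness of $W$ from Proposition~\ref{prop:factor W}, reductivity of $N\cap Z$ and compactness of $(N\cap Z)(\R)$ from the proof of Theorem~\ref{thm:geom in hybrid}, functoriality of heights) are exactly the right ones. Your upper bound $H_\R(w)\preccurlyeq\max_\chi\abs{\chi(a)}^c$ via the decomposition $g=uak$ is also fine.

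There is, however, a genuine flaw in your account of the key step. The product-formula argument is stated backwards: if a coordinate of $\iota(w)$ is a rational number with \emph{large} archimedean absolute value, the product formula only forces its numerator to be large and says nothing about its denominator (an integer coordinate can be huge while $H_f=1$). The mechanism that actually yields $\max_\chi\abs{\chi(a)}\preccurlyeq H_f(w)^{c'}$ is the opposite one: since $a$ is confined to the shifted positive chamber of the Siegel set, growth of $\max_\chi\abs{\chi(a)}$ makes the coordinates in suitable low-weight directions \emph{small}, and a \emph{nonzero} rational of absolute value at most $\varepsilon$ has denominator at least $1/\varepsilon$. The nontrivial content of \cite[\S5]{APZ1} is precisely to produce such a small coordinate that is provably nonzero, and this uses the closedness of the orbit and the compactness of the stabiliser at the archimedean place — not merely ``$W$ affine plus a $G$-equivariant embedding'', as your last paragraph asserts. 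So the verification you mention at the start (that the arguments of \cite[\S5.1.1 and \S5.5.2]{APZ1} go through with $(N\cap Z)(\R)$ in place of $Z_G(M)(\R)$) is the actual content of the adaptation, and it cannot be replaced by the heuristic given in your final step.
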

We follow~\cite[Theorem 5.16]{APZ1} and its proof. We apply the arguments to~$W=G/(Z(G(M^{der})\cdot N_G(Z(M)))$ instead of~$W=G/Z(G(M))$ in~\cite[\S5]{APZ1}.
We define~\cite[(38)]{APZ1} using~$Z(G(M^{der})\cdot N_G(Z(M))(\R)$ instead of~$Z(G(M))(\R)$. The rest of~\cite[\S5.1.1]{APZ1} holds true with~$Z(G(M^{der})\cdot N_G(Z(M))(\R)$ instead of~$Z(G(M))(\R)$.
The proof of~\cite[\S 5.5.2 i) (40) ii) and iii)]{APZ1} does not change, and thus we can deduce Theorem~\ref{thm:111} from~\cite[Prop. 5.9]{APZ1}.

\section{Galois invariance of the height}\label{sec:invariance}
\emph{We adapt~\cite[Prop. 4.3]{APZ1} to hybrid geometric orbits.}

We consider a Shimura variety~$S=S_K(G,X)$ and a geometric hybrid orbit~$\Sigma^g(x)=\pi(W(\Q)^+)\subseteq X$ and a field~$F$ such that
\[
[x,1]\in S(F).
\]
We consider the Galois representation
\[
\rho_{x^{ad}}:Gal(\ol{F}/F)\to M^{ad}(\A_f).
\]
Let~$d:=\dim(M^{ad})$ and~$U:=\rho_{x^{ad}}(Gal(\ol{F}/F))$. We choose a basis~$(b_1,\ldots,b_{d})$ of~$\mathfrak{m}^{ad}_x$ such that~$\mathfrak{m}^{ad}_{x,\Z}:=b_1\Z+\ldots+b_d\Z$ is such that~$\mathfrak{m}^{ad}_{x,\Z}\tens\wh{\Z}$ is stable under~$U$. We choose a basis~$(c_1,\ldots,c_{\dim(G)})$ of~$\mathfrak{g}$ such that~$\mathfrak{g}_{\Z}:=c_1\Z+\ldots+c_{\dim(G)}\Z$ is such that~$\mathfrak{g}_{x,\Z}\tens\wh{\Z}$ is stable under~$K$.
We choose a faithful representation~$\rho_G:G\to GL(N)$ such that~$K\leq GL(N,\wh{\Z})$.

For every~$w\in W(\Q)^{+}$ the element~$\pi(w)$ has a Mumford-Tate group~$M_{\pi(w)}$ and there is a unique morphism~$\phi_w:M_w\to M_x^{ad}$ such that~$\phi\circ \pi(w)=x^{ad}$. The morphism~$d\phi:\mathfrak{m}_{\pi(w)}\to 
\mathfrak{m}^{ad}_x$ induces an isomorphism~$\psi_{\pi(w)}:\mathfrak{m}^{der}_{\pi(w)}\to \mathfrak{m}^{ad}_x$. We view the inverse matrix as a linear map~$\psi^{-1}_{\pi(w)}\in \Hom(\mathfrak{m}^{ad}_x,\mathfrak{g})$. We can define
\[
H_f(g^{-1}\psi^{-1}_{\pi(w)}g)=\{\min n\in\Z_{\geq1}| n\cdot \psi^{-1}_{\pi(w)}(\mathfrak{m}^{ad}_{x,\Z}\tens\wh{\Z})\leq g\cdot\mathfrak{g}_{\Z}\tens\wh{\Z}\cdot g^{-1}\}
\]
Let~$T_w=\rho_G(Z(M_{\pi(w)})\leq GL(N)$ has an associated canonical tensor~$\eta_{T_{\pi(w)}}\in V_{\dim(T_{\pi(w)})}$ (see~\eqref{eq:canonical embedded tensor}). For~$g\in G(\A_f)$ we can define the finite height
\[
\delta_{g^{-1}T_{\pi(w)}g}:=H_f(g\cdot \eta_{T_{\pi(w)}})=\{\min n\in\Z_{\geq1}|n\cdot \eta\in g\cdot V_{\dim(T_{\pi(w)}),\Z}\tens\wh{\Z}\}.
\]
\begin{proposition}\label{prop:K U M invariance}
The functions
\[
d:
W(\Q)^{+}\times G(\A_f)\to \Z_{\geq1}: (w,g)\mapsto \delta_{g^{-1}T_{\pi(w)}g}
\]
and
\[
h:
W(\Q)^{+}\times G(\A_f)\to \Z_{\geq1}: (w,g)\mapsto H_f(g^{-1}\psi_{\pi(w)}^{-1})
\]
are right~$K$-invariant and left~$G(\Q)$-invariant. For~$w\in W(\Q)^{+}$ and~$g\in G(\A_f)$ and~$m\in M_{\pi(w)}(\A_f)$ such that~$\phi_w(m)\in U$, we have
\[
(d(w,mg),h(w,mg))=(d(w,g),h(w,g)).
\]
\end{proposition}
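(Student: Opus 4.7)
The plan is to verify each of the three invariance assertions in turn. All three reduce to tracking how the various adjoint actions on $\mathfrak{gl}(N)$, on $V$, and on the Lie algebra lattices interact with: (i) the lattice stabiliser properties of $K \leq GL(N,\wh{\Z})$; (ii) the $\Q$-rational functoriality of Mumford--Tate groups under conjugation in $G(\Q)$; and (iii) the intertwining property of $\phi_w$ combined with the $U$-stability of the lattice $\mathfrak{m}^{ad}_{x,\Z}\tens\wh{\Z}$ chosen at the start of the section.

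\emph{Right $K$-invariance.} Since $\rho_G(K)\leq GL(N,\wh{\Z})$, the adjoint action $Ad(k)$ of any $k\in K$ preserves $\mathfrak{gl}(N,\wh{\Z})$, hence preserves both the lattice $\mathfrak{g}_\Z\tens\wh{\Z}$ (by our choice) and the lattice $V_{\dim(T_{\pi(w)}),\Z}\tens\wh{\Z}\subseteq (\Lambda^{\dim(T_{\pi(w)})}\mathfrak{gl}(N,\wh{\Z}))^{\tens2}$. Thus $Ad(gk)$ and $Ad(g)$ have the same image on each of these lattices, so the conditions defining $d(w,gk)$ and $h(w,gk)$ agree with those for $d(w,g)$ and $h(w,g)$.

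\emph{Left $G(\Q)$-invariance.} For $\gamma\in G(\Q)$ such that $\gamma w\in W(\Q)^+$, the $G(\R)^+$-equivariance of $\pi$ (Theorem~\ref{thm:geom in hybrid}) gives $\pi(\gamma w)=\gamma\cdot \pi(w)$, and consequently $M_{\pi(\gamma w)}=\gamma M_{\pi(w)}\gamma^{-1}$ and $T_{\pi(\gamma w)}=\gamma T_{\pi(w)}\gamma^{-1}$. By functoriality of the canonical quadratic tensor, $\eta_{T_{\pi(\gamma w)}}=Ad(\gamma)\eta_{T_{\pi(w)}}$. Moreover, from $\phi_{\gamma w}\circ \pi(\gamma w)=x^{ad}=\phi_w\circ \pi(w)$ and the uniqueness of $\phi_{\gamma w}$, one derives $\phi_{\gamma w}=\phi_w\circ Ad(\gamma^{-1})$, hence $\psi_{\pi(\gamma w)}^{-1}=Ad(\gamma)\circ \psi_{\pi(w)}^{-1}$. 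Substituting into the definitions of $d(\gamma w,\gamma g)$ and $h(\gamma w,\gamma g)$ and using that $Ad(\gamma)$ is a $\Q$-linear automorphism of $V_\Q$, resp.\,$\mathfrak{g}_\Q$, one cancels the outer $Ad(\gamma)$ and the inner $Ad(\gamma^{-1})$ coming from $g$, yielding equality with $d(w,g)$ and $h(w,g)$.

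\emph{Invariance under $m$ with $\phi_w(m)\in U$.} For $d$: since $\rho_G(m)$ lies in $\rho_G(M_{\pi(w)}(\A_f))$ and $T_{\pi(w)}=\rho_G(Z(M_{\pi(w)}))$ is central in $\rho_G(M_{\pi(w)})$, the element $m$ centralises $T_{\pi(w)}$. Hence $Ad(m)$ acts trivially on $\mathfrak{t}_{\pi(w)}$ and fixes the canonical tensor $\eta_{T_{\pi(w)}}\in \det(\mathfrak{t}_{\pi(w)})^{\tens2}$; the condition $n\eta_{T_{\pi(w)}}\in Ad(mg)(V_{\Z}\tens\wh{\Z})$ becomes $n\eta_{T_{\pi(w)}}\in Ad(g)(V_{\Z}\tens\wh{\Z})$ after applying $Ad(m^{-1})$, so $d(w,mg)=d(w,g)$. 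For $h$: as $\phi_w$ is a homomorphism, $\psi_{\pi(w)}\circ Ad(m)|_{\mathfrak{m}^{der}_{\pi(w)}}=Ad(\phi_w(m))\circ \psi_{\pi(w)}$, hence $Ad(m)\circ\psi_{\pi(w)}^{-1}=\psi_{\pi(w)}^{-1}\circ Ad(\phi_w(m))$ on $\mathfrak{m}^{ad}_x$. Since $\phi_w(m)\in U$ and $\mathfrak{m}^{ad}_{x,\Z}\tens\wh{\Z}$ is $U$-stable by construction, $Ad(\phi_w(m))$ preserves this lattice; therefore $Ad(m^{-1})$ preserves $\psi_{\pi(w)}^{-1}(\mathfrak{m}^{ad}_{x,\Z}\tens\wh{\Z})$, and the condition defining $h(w,mg)$ reduces to that defining $h(w,g)$.

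The main obstacle is not mathematical depth but bookkeeping: one must consistently distinguish the adjoint action of $G$ on its own Lie algebra from the induced action on $V$, and relate the conjugation by $m\in M_{\pi(w)}(\A_f)$ (an adelic element) to the conjugation by $\phi_w(m)$ (a Galois image), using exactly the intertwining property built into the definition of the hybrid orbit. Once this dictionary is in place, each invariance is a one-line cancellation.
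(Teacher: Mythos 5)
Your proof is correct and takes essentially the same route as the paper's: right $K$-invariance from $\rho_G(K)\leq GL(N,\wh{\Z})$ and the $K$-stability of the chosen lattices, left $G(\Q)$-invariance from the equivariance $\pi(\gamma w)=\gamma\cdot\pi(w)$ and the resulting conjugation formulas for $T_{\pi(w)}$ and $\psi_{\pi(w)}^{-1}$, and the $m$-invariance from the centrality of $T_{\pi(w)}$ in $M_{\pi(w)}$ together with the intertwining relation $Ad(m)\circ\psi_{\pi(w)}^{-1}=\psi_{\pi(w)}^{-1}\circ Ad(\phi_w(m))$ and the $U$-stability of $\mathfrak{m}^{ad}_{x,\Z}\tens\wh{\Z}$.
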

\begin{proof}The~$K$-invariance follows immediately from~$\rho_G(K)\leq GL(N,\wh{\Z})$, the~$GL(N,\wh{\Z})$-invariance of~$V_{\dim(T_{\pi(w)}),\Z}\tens\wh{\Z}$
and the~$K$-invariance of~$\mathfrak{g}_{\Z}\tens\wh{\Z}$.

The~$G(\Q)$-invariance follows from: 
\[
\forall \gamma\in G(\Q),
(\gamma g)^{-1}T_{\gamma\pi(w)}(\gamma g)=(g^{-1}\gamma^{-1})(\gamma T_{\pi(w)}\gamma^{-1})\cdot(\gamma g)=g^{-1} T_{\pi(w)}g
\]
and
\[
\forall \gamma\in G(\Q),
g^{-1}\psi_{\gamma \pi(w)}^{-1}=g^{-1}\gamma \psi_{\pi(w)}^{-1}=(\gamma^{-1}g)^{-1}\psi_{\pi(w)}^{-1}.
\]

Recall that~$T_{\pi(w)}$ is the centre of~$M_{\pi(w)}$. Therefore, for~$m\in M_{\pi(w)}(\A_f)$ we have~$m^{-1}T_{\pi(w)}m=T_{\pi(w)}$. It follows
\[
\delta_{g^{-1}m^{-1}T_{\pi(w)}mg}=\delta_{g^{-1}T_{\pi(w)}g}.
\]

We observe that~$m^{-1}\psi_{\pi(w)}^{-1}=\psi_{\pi(w)}^{-1}\phi_w(m)$ for~$m\in M_{\pi(w)}(\A_f)$. If~$\phi_w(m)\in U$, we have
\[
g^{-1}m^{-1}\psi_{\pi(w)}^{-1} (\mathfrak{m}^{ad}_x)=g^{-1}\psi_{\pi(w)}^{-1} (\phi_w(m)\cdot \mathfrak{m}^{ad}_x)=g^{-1}\psi_{\pi(w)}^{-1} (\mathfrak{m}^{ad}_x)
\]
and thus
\[
H_f((mg)^{-1}\psi_{\pi(w)}^{-1})=H_f(g^{-1}\psi_{\pi(w)}^{-1}).\qedhere
\]
\end{proof}
\begin{corollary} For~$(w_1,g_1),(w_2,g_2)\in W(\Q)^+\times G(\A_f)$ such that~$[\pi(w_1),g_1]=\sigma([\pi(w_2),g_2])$ we have
\begin{equation}\label{eq:gal invariance}
\delta_{g_1^{-1} T_{\pi(w_1)}g}=\delta_{g_2^{-1} T_{\pi(w_2)}g_2}
\text{ and }
H_f(g_1^{-1}\psi_{\pi(w_1)}^{-1})=H_f(g_2^{-1}\psi_{\pi(w_2)}^{-1}).
\end{equation}
\end{corollary}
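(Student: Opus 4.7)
The plan is to decompose the hypothesis $[\pi(w_1),g_1]=\sigma([\pi(w_2),g_2])$ in $Sh_K(G,X)$ into a chain of elementary moves and apply the three invariances of Proposition~\ref{prop:K U M invariance}.

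First I would invoke the reciprocity formula~\eqref{eq:defi rhox}, reducing if necessary to $\sigma\in\Gal(\ol{F}/F\cdot E(M_{\pi(w_2)},X_{\pi(w_2)}))$ by absorbing a Galois conjugation of $\pi(w_2)$ into the $G(\Q)$-equivalence. For any lift $\tilde m_\sigma\in M_{\pi(w_2)}(\A_f)$ of $\rho_{\pi(w_2)}(\sigma)\in M_{\pi(w_2)}(\A_f)/Z(M_{\pi(w_2)})(\Q)$, the $G(\A_f)$-equivariance of the Galois action gives
\[
\sigma([\pi(w_2),g_2])=[\pi(w_2),\tilde m_\sigma g_2].
\]
The equality with $[\pi(w_1),g_1]$ at level $K$ then produces $\gamma\in G(\Q)^+$ and $k\in K$ such that $\pi(w_1)=\gamma\cdot\pi(w_2)$ and $g_1=\gamma\,\tilde m_\sigma\,g_2\,k$; by the uniqueness of preimages under the parametrisation (Propositions~\ref{prop:41} and~\ref{prop:unicity}) one may take $w_1=\gamma\cdot w_2$ in $W(\Q)^+$.

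Second I would verify that $\phi_{w_2}(\tilde m_\sigma)\in U$, so as to be entitled to the third invariance in Proposition~\ref{prop:K U M invariance}. The morphism $\phi_{w_2}\colon M_{\pi(w_2)}\to M_x^{ad}$ factors as $M_{\pi(w_2)}\twoheadrightarrow M_{\pi(w_2)}^{ad}\xrightarrow{\sim}M_x^{ad}$, the second arrow being the canonical isomorphism obtained from $\pi(w_2)\in\Sigma^g(x)$ and sending $\pi(w_2)^{ad}$ to $x^{ad}$. Since $M_x^{ad}$ is centreless, $\phi_{w_2}(\tilde m_\sigma)$ does not depend on the chosen lift and identifies, under the displayed isomorphism, with $\rho_{\pi(w_2)^{ad}}(\sigma)=\rho_{x^{ad}}(\sigma)\in U$, the last equality following from the functoriality of reflex reciprocity on the adjoint datum.

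Combining these two ingredients with Proposition~\ref{prop:K U M invariance}, one obtains, for $f\in\{d,h\}$,
\[
f(w_1,g_1)=f(\gamma w_2,\gamma\tilde m_\sigma g_2 k)=f(w_2,\tilde m_\sigma g_2 k)=f(w_2,\tilde m_\sigma g_2)=f(w_2,g_2),
\]
applying successively the left $G(\Q)$-invariance, the right $K$-invariance, and the right $M_{\pi(w_2)}(\A_f)$-invariance at an element whose image under $\phi_{w_2}$ lies in $U$. The main obstacle is the bookkeeping around the reflex field and the ambiguity of the lift of $\rho_{\pi(w_2)}(\sigma)$: different lifts differ by elements of $Z(M_{\pi(w_2)})(\Q)\subseteq G(\Q)$ that fix $\pi(w_2)$ and commute with $\tilde m_\sigma$, so their effect is absorbed by the $G(\Q)$-invariance; similarly, Galois conjugates of $\pi(w_2)$ within the hybrid orbit are $G(\Q)$-translates and are absorbed by the same invariance, which reduces the general statement to the case where $\sigma$ fixes the relevant reflex field.
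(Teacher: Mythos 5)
Your proof is correct and follows essentially the same route as the paper's: lift $\rho_{\pi(w_2)}(\sigma)$ to $m\in M_{\pi(w_2)}(\A_f)$, write $\sigma([\pi(w_2),g_2])=[\pi(w_2),m\cdot g_2]$, extract $(\gamma,k)\in G(\Q)\times K$ from the equality in the Shimura variety, and conclude by the left $G(\Q)$-, right $K$-, and $\phi_w(m)\in U$-invariances of Proposition~\ref{prop:K U M invariance}. The only difference is that you spell out the justification that $\phi_{w_2}(m)\in U$ (and the reflex-field and lift bookkeeping), which the paper simply recalls.
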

\begin{proof} Let~$m\in M_{\pi(w_2)}(\A_f)$ be such that~$m\cdot Z(M_{\pi(w_2)})(\Q)= \rho_{\pi(w_2)}(\sigma)$ in~$M_{\pi(w_2)}(\A_f)/Z(M_{\pi(w_2)})(\Q)$.
We recall that~$\phi_w(m)\in U$.
We have
\[
\sigma([\pi(w_2),g_2])=[\pi(w_2),m\cdot g_2].
\]
There exist~$(\gamma,k)\in G(\Q)\times K$ such that
\[
(\gamma\cdot \pi(w_2),\gamma\cdot m\cdot g_2\cdot k)=(\pi(w_1),g_1).
\]
By Proposition~\ref{prop:K U M invariance}, we deduce~\eqref{eq:gal invariance}.
\end{proof}
\section{Proof of the hybrid conjecture for Shimura varieties abelian type}\label{sec:proof}

The structure of the proof of Th.~\ref{thm:main thm} is essentially the same as in~\cite[\S7]{APZ1} and~\cite[\S3]{APZ2}.

Firstly, we use hybrid orbits instead of generalised Hecke orbits, and geometric hybrid orbits instead of geometric Hecke orbits.

\subsection{} In the steps ``reduction to the Hodge generic case''~\cite[7.1.1]{APZ1} and ``reduction to the adjoint datum''~\cite[7.1.2]{APZ1}  
we use the functoriality properties of hybrid orbits (\cite[Cor. 7.2]{RYLMS}). We have notably: if~$(G',X')\leq (G,X)$ is a subdatum 
then~$\Sigma_X(M,X_M,x)\cap X'=\Sigma_{X'}(M,X_M,x)$ and $\Sigma_{X'^{ad}}(M,X_M,x)=ad_{G'}(\Sigma_X(M,X_M,x))$.
Moreover~$(G',X')$ and~$(G'^{ad},X^{ad})$ are of abelian type if~$(G,X)$ is of abelian type. We may thus pass to a subdatum and assume that~$V$ is Hodge generic in~$(G, X)$ 
and we can pass to the adjoint datum and assume~$G$ is of adjoint type.

\subsection{} 

In~\cite[7.1.3]{APZ2}, ``Induction argument for factorable subvarieties'', we note that~$(G_1\times G_2,X_1\times X_2)$ is of abelian type if and only if~$(G_1,X_1)$ and~$(G_2,X_2)$ are.

\subsection{} In~\cite[\S7.2]{APZ2} we use Th.~\ref{thm:finiteness geometric hybrid} instead of~\cite[Th.~2.4]{APZ2}.

\subsection{} In~\cite[\S\S7.2.1--7.2.4]{APZ2} we use~$W=G/(Z_G(M^{der})\cap N_G(Z(M)))$ associated with a geometric hybrid orbit as in~\S\ref{sec:geometric hybrid orbit}, instead of~$W=G/Z_G(M)$
as in \cite{APZ2}.

\subsection{}

Another change from~\cite[\S\S 3.1--3.3, 7]{APZ1} is in~\cite[7.2.3]{APZ1} where we use our Th.~\ref{thm:tech}, instead of the lower bound on the size of Galois orbits~\cite[Th.~6.4]{APZ1}.

We may apply Th.~\ref{thm:tech} to the Galois image~$U_{x'}$, because~$(G,X)$ is of abelian type.
\subsection{} In the line preceeding of \cite[\S7 (62)]{APZ1} we replace \cite[\S7 Th.~5.16]{APZ2} by Th.~\ref{thm:111}.

\appendix


\begin{thebibliography}{99}

\bibitem{AD} V. Aslanyan, C. Daw {\it A note on unlikely intersections in Shimura varieties}, Journal of Number Theory, Volume 260, July 2024, Pages 212-222.\url{https://doi.org/10.1016/j.jnt.2024.02.015}.

\bibitem{BD} F. Barroero, G. Dill, {\it Hecke orbits and the Mordell-Lang conjecture in distinguished categories} \url{https://arxiv.org/pdf/2401.11193}


%
%
%

\bibitem{BiSYa} G. Binyamini, H. Schmidt, A. Yafaev, A. {\it Lower bounds for Galois orbits of special points on Shimura varieties: a point-counting approach}. Math. Ann. 385, 961–973 (2023). \url{https://doi.org/10.1007/s00208-021-02309-0}


\bibitem{BorelLAG} A.~Borel, {\it Linear algebraic Groups}. Second Edition, GTM 126, Springer-Verlag, 1991.


\bibitem{BorelJi} A.~Borel, L.~Ji {\it Compactifications of Symmetric and Locally Symmetric Spaces}, Mathematics: Theory and Applications, Birkhauser (2006).



%
%
%

\bibitem{CO} C.-L. Chai, F. Oort, {\it
Abelian varieties isogenous to a Jacobian.}  Ann. of Math. (2)176(2012), no.1, 589–635.


\bibitem{CU} L. Clozel and E. Ullmo {\it  Equidistribution adélique des tores et équidistribution des points CM}.
Doc. Math (2006), volume en l'honneur de J. Coates, p. 233-260.



%
%
%
%



\bibitem{DR}
C. Daw, Christopher and J. Ren, {\it
Applications of the hyperbolic Ax-Schanuel conjecture}. Compos. Math.154(2018), no.9, 1843–1888.


%

\bibitem{EdYa} B. Edixhoven and A. Yafaev, {\it
Subvarieties of Shimura varieties.} Ann. of Math. (2)157(2003), no.2, 621–645.

\bibitem{FaFa2} G. Faltings, {\it Diophantine approximation on abelian varieties.} Ann. of Math. (2)133(1991), no.3, 549–576.

\bibitem{FaFa} G. Faltings, {\it Finiteness theorems for Abelian Varieties}, (translated from the original German paper),
in Arithmetic Geometry, edited by G. Cornell and J.H. Silverman, Springer-Verlag, 1983. 

%
%
%
%
%
%
%
%
%

\bibitem{McQuillan} M. McQuillan {\it Diophantine approximations and foliations.} Inst. Hautes Études Sci. Publ. Math.(1998), no.87, 121–174.

%
%
%

\bibitem{AO-PST}
J. Pila, A. N. Shankar, J. Tsimerman 
with an appendix by H. Esnault and M. Groechenig,
{\it Canonical Heights on Shimura Varieties and the André-Oort conjecture}. \url{https://arxiv.org/abs/2109.08788v3}

\bibitem{P} J. Pila, {\it Point-counting and the Zilber-Pink conjecture.}
Cambridge Tracts in Math., 228, Cambridge University Press, Cambridge, 2022.
%
%
%

\bibitem{Poonen} B. Poonen, {\it Mordell-Lang plus Bogomolov} Invent. math. 137, 413–425 (1999). \url{https://doi.org/10.1007/s002220050331}


\bibitem{Raynaud} M. Raynaud, {\it Sous-variétés d'une variété abélienne et points de torsion.} Arithmetic and geometry, Vol. I, 327–352.
Progr. Math., 35 Birkhäuser Boston, Inc., Boston, MA, 1983


%
%
%
%

\bibitem{APZ2} R. Richard, A. Yafaev,  
{\it Generalised André-Pink-Zannier conjecture for Shimura varieties of Abelian Type.}  \url{https://arxiv.org/abs/2111.11216}

\bibitem{RYLMS}R. Richard, A. Yafaev,  
{\it  A Common Generalisation of the André-Oort and André-Pink-Zannier Conjectures.} \url{https://arxiv.org/abs/2401.03528}



\bibitem{APZ1} R. Richard, A. Yafaev,  
{\it Height functions on Hecke orbits and the generalised André-Pink-Zannier conjecture.}  \url{https://arxiv.org/abs/2109.13718}


%
%
%
%
%
%

\bibitem{Richardson} Richardson, {\it Conjugacy classes of~$n$-tuples in Lie algebras and Lie groups}, Duke Math. Journal, 57 (1988).

%

\bibitem{LNM5}   J.-P. Serre, {\it  Cohomologie galoisienne.} Fifth edition
Lecture Notes in Math., 5. Springer-Verlag, Berlin, 1994. x+181 pp.

\bibitem{SerreCR} J.-P. Serre, {\it Compl\`{e}te r\'{e}ductibilit\'{e}}, S\'{e}m. Bourb. \no 934 (2004).

\bibitem{SerreLMW} J.-P. Serre, {\it 
Lectures on the Mordell-Weil theorem. }
Translated from the French and edited by Martin Brown from notes by Michel Waldschmidt. With a foreword by Brown and Serre. Third edition
Aspects Math.


\bibitem{SerreOEuvres4} J.-P. Serre, {\it {\OE}uvres, Collected Papers, Volume IV, 1985-1998.}


\bibitem{TsiAG} J. Tsimerman, {\it
The André-Oort conjecture for $\mathcal{A}_g$.}
Ann. of Math. (2)187(2018), no.2, 379–390.


\bibitem{Tsi} J. Tsimerman, {\it
Brauer-Siegel for arithmetic tori and lower bounds for Galois orbits of special points.} J. Amer. Math. Soc.25(2012), no.4, 1091–1117.


\bibitem{U} E. Ullmo, {\it Equidistribution des sous-vari\'et\'es sp\'eciales II. }, 
J. Reine Angew. Math. 606 (2007), p. 193-216.

%
%
%
%
%

\bibitem{UY4} E. Ullmo, A. Yafaev
{\it Nombre de classes des tores de multiplication complexe et bornes inférieures pour les orbites galoisiennes de points spéciaux.}
Bull. Soc. Math. France 143 (2015), no. 1, 197–228.

%

\end{thebibliography}
\end{document}